\newtheorem{theorem}{Theorem}[section]
\newtheorem{lemma}[theorem]{Lemma}
\newtheorem{proposition}[theorem]{Proposition}
\newtheorem{corollary}[theorem]{Corollary}
\newtheorem{conjecture}[theorem]{Conjecture}
\theoremstyle{definition}
\newtheorem{question}[theorem]{Question}
\newtheorem{definition}[theorem]{Definition}
\newtheorem{remark}[theorem]{Remark}
\newcommand{\acts}[1]{\stackrel{#1}{\curvearrowright}}
\newcommand{\arrow}{\rightarrow}
\newcommand{\Hom}{\operatorname{Hom}}
\newcommand{\N}{\mathbb{N}}
\newcommand{\Z}{\mathbb{Z}}
\newcommand{\U}{\mathcal{U}}
\newcommand{\W}{\mathcal{W}}
\renewcommand{\O}{\mathcal{O}}
\newcommand{\Sub}{\operatorname{Sub}}
\newcommand{\Env}{\operatorname{Env}}
\newcommand{\Is}{\operatorname{Is}}
\newcommand{\fg}{\operatorname{fg}}
\newcommand{\fii}{\operatorname{fi}}
\newcommand{\Occ}{\operatorname{Occ}}
\newcommand{\Sch}{\operatorname{Sch}}
\newcommand{\Cam}{\operatorname{CoAm}}
\newcommand{\G}{\mathcal{G}}
\renewcommand{\subset}{\subseteq}
\newcommand{\la}{\langle}
\newcommand{\ra}{\rangle}
\newcommand{\minus}{\smallsetminus}
\renewcommand{\epsilon}{\varepsilon}
\renewcommand{\phi}{\varphi}
\newcommand{\trace}[2]{\text{trace}_{#2}\text{($#1$)}}
\newcommand{\rest}[1]{\big|_{#1}}
\author{Yair Glasner \and Daniel Kitroser \and Julien Melleray}
\title{From isolated subgroups to generic permutation representations} 
\date{}
\subjclass[2010]{Primary 20E26; Secondary 0B07, 03E15}%
\keywords{Isolated subgroups, Solitary groups, LERF groups, amenable groups, ample generics, amenable actions.}%
\begin{document}
\begin{abstract}
Let $G$ be a countable group, $\Sub(G)$ the (compact, metric) space of all subgroups of $G$ with the Chabauty topology and $\Is(G) \subset \Sub(G)$ the collection of isolated points. We denote by $X!$ the (Polish) group of all permutations of a countable set $X$. Then the following properties are equivalent: (i) $\Is(G)$ is dense in $\Sub(G)$, (ii) $G$ admits a ``generic permutation representation''. Namely there exists some $\tau^* \in \Hom(G,X!)$ such that the collection of permutation representations $\{\phi \in \Hom(G,X!) \ | \ \phi {\text{ is permutation isomorphic to }} \tau^*\}$ is co-meager in $\Hom(G,X!)$. We call groups satisfying these properties {\it{solitary}}. Examples of solitary groups include finitely generated LERF groups and groups with countably many subgroups. 
\end{abstract}
\maketitle

\section{Introduction}

Let $G$ be a countable group and $\Sub(G)$, the space of all subgroups of $G$ endowed with the Chabauty topology, which makes it into a compact metrizable totally disconnected space. The easiest way to define this topology is to embed $\Sub(G) \subset \{0,1\}^G$ as a closed subset and induce the Tychonoff topology on $\{0,1\}^G$. The group $G$ acts on $\Sub(G)$ continuously by conjugation $g \cdot \Delta = g \Delta g^{-1}$. One is naturally led to the question of how the structure of the topological space $\Sub(G)$, or more generally the topological dynamical system $(G,\Sub(G))$, is reflected in the algebraic structure of $G$. %As a side remark let us mention that the structure of the topological space $\Sub(G)$ is often too complicated. Recently it was realized that it is easier and very profitable to investigate instead invariant measures on $\Sub(G)$. The latter are called IRS, see \cite{Gelander:IRS_survey} and the references therein for a survey on this approach. 

The Cantor-Bendixon structure theory of compact spaces leads us to consider the decomposition
$\Sub(G) = \Is(G) \sqcup \Sub(G)'.$
Here $\Is(G)$ is the collection of isolated points, or {\it{isolated subgroups}} as we shall refer to them, and $\Sub(G)'$ is its  complement. Isolated subgroups are special from the algebraic point of view! Clearly there are only countably many of them. One can think of isolated subgroups in algorithmic terms as subgroups that are {\it{detectable}} or {\it{recognizable}} via a finite algorithmic procedure. A subgroup $\Delta \in \Sub(G)$ is isolated if and only if it can be identified by making a finite number of membership and non-membership tests for specific elements. It is sometimes convenient to think of this in terms of Schreier graphs. Let $S$ be a symmetric generating set for $G$. A subgroup $\Delta \in \Sub(G)$ is isolated if one can find a finite algorithm that would recognize the Schreier graph $\Sch(G,\Delta,S)$ out of all Schreier graphs of the group $G$. Note that $S$ might very well be infinite, and consequently the Schreier graphs in question may fail to be locally finite. Still the algorithm is allowed to look only at finitely many edges. From these characterizations it is easy to see that isolated groups are always finitely generated. In the special case where $G$ itself is finitely generated every finite index subgroup is isolated and we obtain inclusions $\Sub^{\fii}(G) \subset \Is(G) \subset \Sub^{\fg}(G)$ where $\Sub^{\fii}(G), \Sub^{\fg}(G)$ stand for finite index and finitely generated subgroups respectively. It is clear that $\Is(G)$ is always a discrete countable open subset of $\Sub(G)$. Our main new definition is the following: 
\begin{definition}
A group $G$ is called {\it{solitary}} if the isolated points $\Is(G)$ are dense in $\Sub(G)$.
\end{definition}

Let $X$ be a countable set and $X!$ the group of all permutations of $X$. The topology of pointwise convergence makes $X!$ into a Polish group: separable, metrizable and complete.  The space $\Hom(G,X!)$ is the space of all permutation representations of $G$ and is clearly also a Polish space. There is a natural action 
\begin{align} \label{eqn:action}
&X! \times \Hom(G,X!)  \rightarrow \Hom(G,X!) \\
\nonumber &(\alpha, \phi) \mapsto \alpha\cdot\phi: g \mapsto \alpha \phi(g) \alpha^{-1}
\end{align} 
Two permutation representations that are in the same orbit are said to be \emph{isomorphic as permutation representations}. We will be interested in Baire generic properties of permutation representations and in particular in the existence of a generic permutation representation in the sense of the following: \begin{definition}
The group $G$ is said to have a {\it{generic permutation representation}} if there is a permutation representation $\tau^* \in \Hom(G,X!)$ whose orbit 
\begin{eqnarray*}
X!(\tau^*) & = & \{\alpha \cdot \tau^* \ | \ \alpha \in X!\} \\
& = & \{\phi \in \Hom(G,X!) \ | \ \phi {\text{ is a permutation isomorphic to }} \tau^{*}\}
\end{eqnarray*}
is co-meager in $\Hom(G,X!)$. 
\end{definition}

It turns out that the existence of a generic permutation representation is captured by the structure of the topological space $\Sub(G)$.
\begin{theorem} \label{thm:main} (Main theorem)
A countable group $G$ admits a generic permutation representation if and only if it is solitary. 
\end{theorem}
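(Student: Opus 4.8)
\section*{Proof proposal}

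The plan is to encode a permutation representation by its point stabilizers. For $x \in X$ write $s_x \colon \Hom(G,X!) \to \Sub(G)$, $s_x(\phi) = \operatorname{Stab}_\phi(x) = \{g \in G : \phi(g)x = x\}$; each $s_x$ is continuous. The invariant I would exploit is that permutation isomorphism preserves the entire multiset $\{s_x(\phi)\}_{x \in X}$ of point stabilizers (conjugating by $\alpha \in X!$ only relabels which point carries a given stabilizer), so that $\phi$ is determined up to isomorphism by the function sending each conjugacy class of subgroups to the number of orbits of that type. The candidate generic representation is $\tau^* = \bigsqcup_{\Delta \in \Is(G)} \aleph_0 \cdot (G \curvearrowright G/\Delta)$, a countable disjoint union since $\Is(G)$ and each $G/\Delta$ are countable. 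Throughout I would use two standard facts: basic open sets of $\Hom(G,X!)$ correspond to finite consistent partial actions (\emph{conditions}), each of which is realized by a genuine representation because fresh points of $X$ leave room to extend; and the Schreier-graph characterization from the introduction, namely that $\Delta$ is isolated if and only if membership and non-membership of finitely many elements pins $\Delta$ down, i.e.\ if and only if ``$\operatorname{Stab}_\phi(x) = \Delta$'' is a finite (clopen) condition on $\phi$.

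For the direction ($\Leftarrow$) assume $X!(\tau^*)$ is comeager and deduce density of $\Is(G)$. First I would show that for every non-isolated $\Delta_0$ and every $x$ the set $\{\phi : s_x(\phi) = \Delta_0\}$ is meager: given any condition compatible with $s_x = \Delta_0$, non-isolatedness supplies a subgroup $\Delta' \neq \Delta_0$ agreeing with $\Delta_0$ on all elements tested so far, hence an element on which they differ; realizing a larger chunk of $G/\Delta'$ around $x$ forces either some $h \notin \Delta_0$ into $\operatorname{Stab}_\phi(x)$ or some $g \in \Delta_0$ out of it, a nonempty clopen subcondition disjoint from $\{s_x = \Delta_0\}$, so the set is meager. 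Taking the countable union over $x$, the collection of $\phi$ possessing any point with non-isolated stabilizer is meager; by the invariance above, the comeager $\tau^*$ has all stabilizers isolated. Finally, for any nonempty open $U \subseteq \Sub(G)$ choose $\Delta \in U$ and realize a point with $\operatorname{Stab} = \Delta$ to see $s_0^{-1}(U) \neq \emptyset$; since $X!(\tau^*)$ is dense it meets $s_0^{-1}(U)$, giving $\tau^*$ a point with stabilizer in $U$, necessarily isolated. Hence $\Is(G) \cap U \neq \emptyset$ and $G$ is solitary.

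For the main direction ($\Rightarrow$) assume $G$ is solitary and prove $X!(\tau^*)$ comeager by showing that a comeager set cut out by two families of conditions lies in a single orbit. \textbf{(P1)} Every point stabilizer is isolated: as $\Is(G)$ is open, $s_x^{-1}(\Is(G))$ is open, and it is dense because, given any condition, density of $\Is(G)$ lets me pick an isolated $\Delta^*$ matching the finite in/out pattern already forced at $x$ and then extend to realize enough of $G/\Delta^*$ to pin $\operatorname{Stab}_\phi(x) = \Delta^*$; intersecting over $x$ yields a comeager set with all stabilizers isolated. \textbf{(P2)} Every isolated type occurs infinitely often: for fixed isolated $\Delta$ and $n$, the set of $\phi$ with at least $n$ distinct orbits isomorphic to $G/\Delta$ is open (isolatedness makes ``an orbit is $\cong G/\Delta$'' detectable on a finite chunk) and dense (adjoin $n$ fresh copies of $G/\Delta$ on unused points of $X$), hence comeager; I intersect over the countably many pairs $(\Delta,n)$ with $\Delta \in \Is(G)$. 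On the comeager intersection of (P1) and (P2) the orbit-type function is exactly that of $\tau^*$, namely the isolated conjugacy classes each with multiplicity $\aleph_0$ and nothing else, so every such $\phi$ is permutation isomorphic to $\tau^*$, proving $X!(\tau^*)$ comeager.

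I expect the main obstacle to be the bridge between the Chabauty topology on $\Sub(G)$ and Baire category in $\Hom(G,X!)$: making precise, via the Schreier-graph/finite-pattern characterization, that isolated subgroups are exactly the stabilizers forceable by a finite condition while non-isolated ones can always be perturbed away. This, together with the realizability lemma that finite consistent partial actions extend to genuine representations (so that each forcing step actually produces points of $\Hom(G,X!)$), underlies every density and meagerness claim above; the delicate point is that one may reroute an orbit to match a prescribed finite chunk of $G/\Delta^*$ without disturbing the finitely many already-fixed values. The concluding observation that identical orbit-type data yield a permutation isomorphism is then routine.
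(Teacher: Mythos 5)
There is a genuine gap in the forward direction, and it is precisely the point where the paper has to work hardest. Your step (P2) --- that for every isolated $\Delta$ the set of $\phi$ with at least $n$ distinct orbits isomorphic to $G/\Delta$ is open, dense, hence comeager --- is false in general, and consequently your candidate $\tau^* = \bigsqcup_{\Delta \in \Is(G)} \aleph_0 \cdot (G/\Delta)$ is the wrong representation. The openness claim already fails: while ``$G_x(\phi)=\Delta$'' is indeed clopen for isolated $\Delta$, the condition that two such points lie in \emph{distinct} orbits is closed, not open (if $\phi(g)x=y$ this persists under perturbation, but orbit-distinctness does not), and no finite chunk of the action can certify it. Worse, the conclusion itself is false whenever $[N_G(\Delta):\Delta]=\infty$: the paper proves that in this case the event of having two points $x,y$ with stabilizer exactly $\Delta$ in different orbits is \emph{nowhere dense} --- given any basic open set $\mathcal{O}(\phi,S,A)$ one can merge the two orbits into a single copy of $G/\Delta$ by an arbitrarily small perturbation, because the infinitely many cosets $g\Delta \subset N_G(\Delta)$ give infinitely many inequivalent ways to embed each orbit into $G/\Delta$, so the finitely many constrained points of the two orbits can be placed disjointly. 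Thus the set you need to be comeager is dense but meager, and the correct generic representation takes multiplicity $\aleph_0$ only for isolated $\Delta$ with $[N_G(\Delta):\Delta]<\infty$ (there the point-counting of Lemma \ref{lem2} works: each orbit of type $G/\Delta$ contains only $[N_G(\Delta):\Delta]<\infty$ points with stabilizer exactly $\Delta$, so infinitely many such points force infinitely many such orbits) and multiplicity exactly $1$ for isolated $\Sigma$ with $[N_G(\Sigma):\Sigma]=\infty$ (a single orbit already carries infinitely many points with stabilizer $\Sigma$, so point-counting gives nothing, and extra orbits get merged away generically). This normalizer dichotomy is the heart of the paper's proof of this implication and is absent from your outline.

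There is also a smaller, repairable slip in your converse direction: from ``$\{\phi : G_x(\phi)=\Delta_0\}$ is meager for each non-isolated $\Delta_0$'' you cannot conclude that $\{\phi : \text{some stabilizer of } \phi \text{ is non-isolated}\}$ is meager, since for fixed $x$ this would be a union over the possibly \emph{uncountably} many non-isolated subgroups of $G$ (e.g.\ $\Sub(F_2)$ has cardinality $2^{\aleph_0}$). The fix is to use the orbit-invariance you already noted: $\tau^*$ has only countably many stabilizers, so it suffices to apply your meagerness claim to each stabilizer $\Delta_0$ of $\tau^*$ individually, the countable union over $x$ then being legitimate. (The paper argues slightly differently but in the same spirit: if a stabilizer $\Sigma$ of $\tau^*$ had non-open conjugacy class $[\Sigma]$, then $[\Sigma]$ is countable with empty interior, so $\Sub(G)\setminus[\Sigma]$ is a dense $G_\delta$, and by Corollary \ref{cor:dense_dense} the representations avoiding $[\Sigma]$ altogether form a dense $G_\delta$ disjoint from the comeager orbit --- a contradiction.) Your nowhere-density argument for $\{G_x = \Delta_0\}$ is essentially a special case of the openness of the stabilizer map, the paper's main Lemma \ref{lem1}, and that part is sound; but as submitted, the proof fails for every group possessing an isolated subgroup of infinite index in its normalizer, and the density/openness bookkeeping in (P2) would need to be replaced by the two separate arguments sketched above.
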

\begin{definition}
A group $G$ is called \emph{subgroup separable} or \emph{locally extended residually finite} (\emph{LERF} for short), if every finitely generated subgroup of $G$ is the intersection of finite index subgroups. Or equivalently if any finitely generated subgroup is closed in the profinite topology on $G$. 
\end{definition}

Examples of LERF groups include finitely generated abelian groups, free  groups \cite{Hall:LERF}, surface  groups  \cite{Scott:LERF_corr} and more generally, limit groups \cite{Wilton:LERF}, the Grigorchuk group \cite{Grigorchuk_Wilson}, many lamplighter groups \cite{GK:lamplighter_subgps}. Recently the LERF property attracted a lot of attention as Agol's proof of the LERF property for the fundamental group of a closed hyperbolic 3-manifold \cite{Agol:virtual_haken} was a central ingredient in his solution to Thruston's virtual Haken conjecture. 

The following theorem is analogous to our main theorem above. It shows in particular that finitely generated LERF groups are solitary. 

\begin{theorem}\label{thm:LERF}
Let $G$ be a finitely generated group, then the following conditions are equivalent:
\begin{enumerate}
\item $G$ is LERF.
\item The collection $\Sub^{\fii}(G)$ of finite index subgroups is dense in $\Sub(G)$.
\item \label{itm: typical_action} $G$ has a generic permutation representation, all of whose orbits are finite. 
\end{enumerate}
\end{theorem}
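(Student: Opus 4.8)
The plan is to prove $(1)\Leftrightarrow(2)$ by a direct translation between the Chabauty topology and subgroup separability, and then to link $(2)$ with $(3)$ through Theorem \ref{thm:main} together with an analysis of the point-stabilizers of a generic representation.

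First I would treat $(1)\Leftrightarrow(2)$. A basic Chabauty neighbourhood of $H\in\Sub(G)$ has the form $U=\{K\in\Sub(G) : g_1,\dots,g_k\in K,\ h_1,\dots,h_l\notin K\}$ with $g_i\in H$ and $h_j\notin H$. For $(1)\Rightarrow(2)$, given such $U$, put $H_0=\la g_1,\dots,g_k\ra\le H$; since $H_0$ is finitely generated and $h_j\notin H_0$, the LERF property provides finite-index $K_j\supseteq H_0$ with $h_j\notin K_j$, and $K=\bigcap_j K_j$ is a finite-index subgroup lying in $U$. Hence $\Sub^{\fii}(G)$ is dense. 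Conversely, for $(2)\Rightarrow(1)$, given finitely generated $H=\la g_1,\dots,g_k\ra$ and $h\notin H$, the set $U=\{K : g_i\in K\ \forall i,\ h\notin K\}$ is a nonempty open set containing $H$, so by density it contains a finite-index subgroup $K\supseteq H$ with $h\notin K$; letting $h$ range over $G\minus H$ exhibits $H$ as an intersection of finite-index subgroups.

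Next, the bridge to $(3)$. I would first record a purely topological observation: for finitely generated $G$ one always has $\Sub^{\fii}(G)\subseteq\Is(G)$, and each point of $\Is(G)$ is open; therefore if $\Sub^{\fii}(G)$ is dense it must meet every isolated singleton, forcing $\Is(G)=\Sub^{\fii}(G)$. In particular $(2)$ implies $G$ is solitary, so Theorem \ref{thm:main} supplies a generic representation $\tau^*$. The central technical lemma I would prove is that for any $\phi$ whose orbit $X!(\phi)$ is dense (for instance any generic $\tau^*$), the family of point-stabilizers $\{\operatorname{Stab}_\phi(x):x\in X\}$ is dense in $\Sub(G)$. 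This is soft: each $\phi\mapsto\operatorname{Stab}_\phi(x)$ is continuous into $\Sub(G)$, and given $\Delta$ and a neighbourhood $U$ as above I realize $\Delta$ as the stabilizer of a basepoint in the transitive action on $G/\Delta$ (embedded in $\Hom(G,X!)$); the finitely many membership conditions defining $U$ are an open constraint, so density of the orbit of $\tau^*$ produces $\alpha\in X!$ with $\alpha\cdot\tau^*$ satisfying them at that basepoint, i.e. $\operatorname{Stab}_{\tau^*}(\alpha^{-1}x)\in U$. Granting this, $(3)\Rightarrow(2)$ is immediate: if all orbits of the generic $\tau^*$ are finite then all its stabilizers have finite index, and by the lemma these are dense, giving $(2)$.

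It remains to prove $(2)\Rightarrow(3)$, which I expect to be the main obstacle: I must show the generic representation $\tau^*$, now known to exist, actually has all orbits finite. The plan is a Baire-category argument. For each $x\in X$ the set $\{\phi : \operatorname{Stab}_\phi(x)\in\Sub^{\fii}(G)\}$ is the continuous preimage of $\Sub^{\fii}(G)$, hence open because $\Sub^{\fii}(G)=\Is(G)$ is open, and I claim it is dense. The delicate point is the realization step: given any basic open subset of $\Hom(G,X!)$ I must extend its finitely many constraints to a representation in which $x$ has finite-index stabilizer, choosing (via density of $\Sub^{\fii}(G)$) a finite-index $\Delta$ compatible with the conditions already forced at $x$, building a finite orbit $G/\Delta$ around $x$, and completing the remaining finite data consistently. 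This extension argument is the technical heart and runs parallel to the constructions underlying Theorem \ref{thm:main}. Intersecting these countably many dense open sets yields a comeager set on which every stabilizer has finite index; since this property is invariant under the $X!$-action (conjugation merely relabels points and so permutes the set of stabilizers) and the generic orbit is itself comeager, the two comeager sets meet and the generic orbit is contained in the former. Thus $\tau^*$ has only finite-index stabilizers, i.e. only finite orbits, establishing $(3)$.
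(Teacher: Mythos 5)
Your proposal is correct, but it takes a genuinely different route from the paper's, which proves the stronger five-condition version (Theorem \ref{thm:LERF_countable}) valid for all countable groups. Your $(1)\Leftrightarrow(2)$ is a direct Chabauty computation and matches the paper's in substance. The real divergence is in $(2)\Rightarrow(3)$: you invoke Theorem \ref{thm:main} to obtain \emph{existence} of a generic representation and then intersect its comeager, $X!$-invariant orbit with the comeager set of representations all of whose stabilizers have finite index; the paper never appeals to Theorem \ref{thm:main} here. Instead it writes down the candidate $\tau^*=\bigsqcup_n \infty\cdot\left(G/H_n\right)$ (countably many copies of each finite-index quasiregular action), shows via Corollary \ref{cor:dense_dense} applied to the open dense set $\Sub^{\fii}(G)$ that a generic representation has only finite orbits, shows via Lemma \ref{lem2} that each $G/H_n$ occurs infinitely often generically, and observes that these two properties pin down the isomorphism type. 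The paper's route thus buys an explicit description of $\tau^*$ and the non-finitely-generated version for free, while yours is softer and more modular but needs the full strength of Theorem \ref{thm:main}. For the converse you prove that the stabilizer family of a dense orbit is dense in $\Sub(G)$ and deduce $(3)\Rightarrow(2)$; the paper instead proves $(3)\Rightarrow(1)$ directly by perturbing the quasiregular action on $G/L$ inside $\O(\phi,S\cup\{g\},\{x\})$ --- both work, and your lemma is essentially the continuity-plus-surjectivity part of Lemma \ref{lem1}. Two points deserve care. First, the ``technical heart'' you defer --- density of $\{\phi : G_x(\phi)\in\Sub^{\fii}(G)\}$ --- is precisely Corollary \ref{cor:dense_dense} applied to $\mathcal{D}=\Sub^{\fii}(G)$, and it rests on the \emph{openness} of the stabilizer map; your phrase ``a finite-index $\Delta$ compatible with the conditions already forced at $x$'' glosses over the fact that the constraints are not mere membership conditions on single elements: one must match the coset identifications $\omega_1\Delta=\omega_2\Delta$ coming from products, which is why the paper's proof of Lemma \ref{lem1} works with the neighborhood $\W(L,\Omega S\Omega)$ rather than $\W(L,S)$. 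The idea is right and the construction does go through, but as written this step is a sketch of the paper's main lemma rather than a proof. Second, a trivial fix: when realizing a finite-index $\Delta$ as a point stabilizer inside $\Hom(G,X!)$, the coset space $G/\Delta$ is finite, so you must pad it with an auxiliary action on an infinite set before identifying the underlying set with $X$.
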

\begin{remark}
The fact that the first and third condition above are equivalent also follows from earlier work of Rosendal: Proposition 8(B) in \cite{Rosendal1} proves that $G$ is LERF if and only if a generic permutation representation 
has only finite orbits, while Theorem 11 of \cite{Rosendal2} shows that a finitely generated LERF group admits a generic permutation representation. Note that Theorem 11 of \cite{Rosendal2} is only formulated for groups 
acting by isometries on the rational Urysohn space, but see the remark in the last paragraph of \cite{Rosendal2}, where it is pointed out that the proofs adapt to other metric spaces, notably the Urysohn space with distances 
$\{0,1\}$, that is, an infinite countable set.
\end{remark}

In the paper we state and prove a more detailed version of this theorem that holds also for countable groups. Once we leave the realm of finitely generated groups isolated subgroups and solitary groups no longer generalize finite index subgroups and LERF groups respectively. Each of these theories goes in its own way. Our impression is that in some settings the choice of isolated subgroups and solitary groups is the more natural one. The following theorem summarizes some examples and structural results we have about solitary groups.
\begin{theorem} \label{thm:solitary_prop} Some properties of solitary groups.
\begin{enumerate}
\item \label{itm:l->s} Finitely generated LERF groups are solitary.
\item \label{itm:countable} If $\Sub(G)$ is countable then $G$ is solitary.
\item \label{itm:fg_kernel} Let $1 \arrow N \arrow H \arrow G \arrow 1$ be a short exact sequence of countable groups such that $N$ is finitely generated, as an abstract group. If $H$ is solitary then so is $G$.  
\item \label{itm:free_prod} The free product of two countable groups $G*H$ is solitary if and only if one of the following two options hold:
\begin{itemize}
\item both $G$ and $H$ are LERF and finitely generated. 
\item $G$ is solitary and $H$ is trivial, or vice-versa.
\end{itemize} 
\end{enumerate}
\end{theorem}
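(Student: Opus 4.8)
The four assertions are of quite different natures, and I would treat them in increasing order of difficulty.

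\textbf{Parts (1) and (2).} Both are short. For (1), I combine Theorem~\ref{thm:LERF} with the inclusion $\Sub^{\fii}(G)\subseteq\Is(G)$ valid for finitely generated $G$ (observed in the introduction): if $G$ is finitely generated and LERF then the equivalence (1)$\Leftrightarrow$(2) of Theorem~\ref{thm:LERF} makes $\Sub^{\fii}(G)$ dense in $\Sub(G)$, hence $\Is(G)\supseteq\Sub^{\fii}(G)$ is dense and $G$ is solitary. For (2) the statement is purely topological. The space $\Sub(G)$ is compact metrizable, hence Polish, and every nonempty open $U\subseteq\Sub(G)$ is itself Polish (open subspaces of Polish spaces are Polish) and, by hypothesis, countable. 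By the Baire category theorem a nonempty countable Polish space is non-meager in itself, so some singleton $\{L\}\subseteq U$ has nonempty interior in $U$; as $U$ is open in $\Sub(G)$, such an $L$ is isolated in $\Sub(G)$. Thus every nonempty open set meets $\Is(G)$, which is therefore dense.

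\textbf{Part (3).} Write $\pi\colon H\to G$ for the quotient map and $N=\ker\pi$. The correspondence $\Delta\mapsto\pi^{-1}(\Delta)$ is a bijection from $\Sub(G)$ onto $\Sub_N(H):=\{L\le H : N\le L\}$; it is a homeomorphism, being a continuous bijection of compact Hausdorff spaces (continuity is checked on the generating clopen sets $\{L: h\in L\}$, whose preimage is the clopen set $\{\Delta:\pi(h)\in\Delta\}$). The whole point of the hypothesis that $N$ is finitely generated, say $N=\langle s_1,\dots,s_k\rangle$, is that a subgroup $L$ contains $N$ iff it contains $s_1,\dots,s_k$, so that $\Sub_N(H)=\bigcap_{i=1}^{k}\{L:s_i\in L\}$ is \emph{clopen} in $\Sub(H)$. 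Now if $H$ is solitary then $\Is(H)$ is dense in $\Sub(H)$; since density passes to the trace on any open subset, $\Is(H)\cap\Sub_N(H)$ is dense in $\Sub_N(H)$, and each of its points, being isolated in $\Sub(H)$, is a fortiori isolated in the subspace $\Sub_N(H)$. Transporting through the homeomorphism, $\Is(G)$ is dense in $\Sub(G)$, so $G$ is solitary.

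\textbf{Part (4), the ``if'' direction, and the reduction for ``only if''.} If one factor is trivial the free product collapses to the other factor and the equivalence is a tautology; so assume both factors are nontrivial. If each is finitely generated and LERF, then $G*H$ is finitely generated, and by the classical fact that a free product of finitely generated LERF groups is again LERF, $G*H$ is finitely generated LERF, whence solitary by Part~(1). Conversely, suppose $G*H$ is solitary with both factors nontrivial; I must show $G$ and $H$ are finitely generated and LERF. The reduction I would set up is: \emph{a nontrivial solitary free product is finitely generated and LERF}, after which each factor inherits both properties automatically --- a factor is a retract of $G*H$, so finite generation descends through the retraction, and a finitely generated subgroup $B$ of a factor, being a finitely generated subgroup of the LERF group $G*H$, is an intersection of finite-index subgroups whose traces on the factor exhibit $B$ as LERF. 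To analyze $\Sub(G*H)$ I would use the Kurosh subgroup theorem together with the continuous embedding $\Sub(G)\hookrightarrow\Sub(G*H)$, $A\mapsto A*H$, which identifies $\Sub(G)$ with a closed copy $\mathcal I$ inside $\Sub(G*H)$; crucially $\mathcal I$ is \emph{not} open, which is exactly why the clean transfer of Part~(3) is unavailable here.

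\textbf{Part (4), the main obstacle.} The remaining and genuinely hard step is to show that any failure of finite generation or of the LERF property in a factor manufactures a nonempty open subset of $\Sub(G*H)$ carrying no isolated point, contradicting solitarity. Starting from a finitely generated $A\le G$ that is not closed in the profinite topology --- the defining witness of non-LERF, so that some $g_0\in\overline{A}\setminus A$ lies in every finite-index overgroup of $A$ --- the plan is to use Kurosh normal forms to ``branch'' $A*H$ into a perfect (Cantor) family of pairwise distinct, mutually accumulating subgroups of $G*H$. The delicate point, and the part I expect to absorb most of the work, is to verify via the Kurosh decomposition that these subgroups genuinely accumulate on one another and cannot be separated from their neighbours by any finite system of membership and non-membership tests, so that none of them nor their limits is isolated. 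Establishing finite generation of the factors follows the same template but is comparatively easier: a non-finitely-generated factor likewise yields, through Kurosh normal forms, an open region of non-isolated subgroups, again contradicting solitarity.
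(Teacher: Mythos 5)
Your parts (1)--(3) are correct and essentially the paper's own arguments: (1) combines Theorem \ref{thm:LERF} with $\Sub^{\fii}(G)\subseteq\Is(G)$ for finitely generated $G$; (2) is the same Baire-category observation (the paper phrases it as: $\Sub(G)\setminus\Is(G)$ is a closed meager set, hence nowhere dense); (3) is exactly the paper's proof, via the correspondence-principle homeomorphism from the clopen set $\Env(N)$ onto $\Sub(G)$. The ``if'' half of (4) also matches the paper (free products of LERF groups are LERF, then apply part (1); trivial-factor case is immediate).

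The converse half of (4) is where essentially all the content of the theorem lives, and there your proposal has a genuine gap: you describe a plan (``branch $A*H$ into a perfect family via Kurosh normal forms'') and explicitly defer its verification, which is precisely the hard step. The paper replaces this with a standalone lemma: \emph{if $G$ is infinite, $H$ is nontrivial and $\Delta\in\Is(G*H)$, then $[G:G\cap\Delta]<\infty$}. Its proof is dynamical rather than combinatorial: realize the quasiregular action on $(G*H)/\Delta$ as $\phi*\psi$ with basepoint $x$; if $[G:G\cap\Delta]=\infty$ the orbit $\phi(G)x$ is infinite, so one can perturb $\psi$ at points $\xi_n$ escaping to infinity inside this orbit, gluing in either the trivial or the regular $H$-action; this yields two sequences of actions converging to $\phi*\psi$ whose point stabilizers both converge to $\Delta$ yet differ (elements $g_n^{-1}hg_n$ lie in one stabilizer but not the other), so at least one stabilizer sequence is not eventually constant and $\Delta$ is not isolated. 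Given this lemma, non-LERF-ness of a factor kills solitarity uniformly: choosing a finitely generated $\Sigma\le G$ and a finite $\Omega\subseteq G$ such that $\W(\Sigma,\Omega)$ contains no finite-index subgroup of $G$, every $\Delta\in\W(\eta^{-1}(\Sigma),\Omega)$ satisfies $[G:G\cap\Delta]=\infty$, so this entire open set misses $\Is(G*H)$. Note the logical shape you would also need: to contradict density of $\Is(G*H)$ one must produce an open set containing \emph{no} isolated point whatsoever; exhibiting a perfect family of mutually accumulating subgroups, even if carried out, only shows those particular subgroups (and their limits) are non-isolated, and it is not clear the Kurosh branching fills a whole Chabauty neighborhood. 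Finally, your treatment of the finite-generation clause is likewise unexecuted; the paper disposes of it differently and easily: if both factors are LERF but one is not finitely generated, then $G*H$ is LERF and not finitely generated, and the paper's earlier proposition that $\Sub(\Gamma)$ is perfect for any countable non-finitely-generated LERF group $\Gamma$ gives $\Is(G*H)=\emptyset$.
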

The situation considered in condition (\ref{itm:fg_kernel}) above is identical to the one appearing in the famous Rips construction. This naturally leads to the following:
\begin{question}
Is it true that every finitely generated solitary group $G$ can be placed in a short exact sequence $1 \arrow N \arrow H \arrow G \arrow 1$ where $H$ is solitary and hyperbolic and $N$ is finitely generated as an abstract group.
\end{question}

Theorem \ref{thm:main} is tightly connected to the notion of ample generics in Polish groups. We adopt the notation of \cite{MT:ample_generics} ; \cite{Rosendal2} was the first paper to express this property in term of generic orbits on ``presentation varieties''.
\begin{definition}
We say that a Polish group $P$ has \emph{ample generics}, if $\Hom(F_n,P)$ admits a generic $P$ orbit for every $n \in \N$. 
\end{definition} 
The notion of ample generics was first introduced in \cite{HHLS:small_index} in order to study the small index property in Polish groups. Namely that every subgroup of index $< 2^{\aleph_0}$ is open. Indeed this and additional consequences such as automatic continuity of abstract homomorphisms into any separable group were subsequently established for all groups with ample generics in \cite{Kechris_Rosendal}. We refer the readers to all of the above mentioned papers (see also the survey \cite{Glasner_Weiss:Rokhlin}) and the references therein. With this terminology in place Theorem \ref{thm:LERF} shows that the following two well known facts:
\begin{itemize}
\item $X!$ has ample generics,
\item finitely generated free groups $F_n$ are LERF,
\end{itemize}
are in fact two different realizations of the same phenomenon. In both cases one seeks a generic $P$-orbit in $\Hom(G,P)$. But in the study of groups with ample generics, one fixes $G$ (or more precisely lets $G$ range over all finitely generated free groups) and lets the Polish group $P$ vary; whereas in the study of solitary groups we fix the Polish group $P = X!$ and consider the class of all the countable groups $G$ that give rise to a generic orbit. In view of the very natural characterization that arises from Theorem \ref{thm:LERF} the following question seems natural:
\begin{question}
Given a Polish group $P$, describe the class of all finitely generated groups $G$ for which $\Hom(G,P)$ has a generic $P$-orbit. 
\end{question} 

In particular, the answer should contain all finitely generated free groups whenever $P$ has ample generics. The group $X!$ is probably the simplest example of a Polish group with ample generics, but there are many others. 

We now turn to generalizing Theorem \ref{thm:LERF}, and the notion of LERF groups, in a different direction. A subgroup $H \leq G$ is called \emph{co-amenable} if there is a $G$-invariant mean on $G/H$ (see also Definitions \ref{def:amenable_action}, \ref{def:coam}). Co-amenable subgroups generalize finite index subgroups in much the same way that amenable groups generalize finite groups. In view of that and of Theorems \ref{thm:LERF} and \ref{thm:main} we can generalize the notion of LERF groups as follows:

\begin{definition}
A group $G$ is \emph{amenably separable}, or \emph{A-separable} for short, if the set of co-amenable subgroups of $G$ is dense in $\Sub(G)$.
\end{definition} 

In view of the Theorem \ref{thm:LERF}, every LERF group is $A$-separable. Another obvious source for examples is the class of all amenable groups. These are $A$-separable since all of their subgroups are co-amenable. In chapter \ref{sec:Asep} of this work, we initiate the study of A-separable groups. Our hope is that the notion of A-separability will prove to be a useful generalization of the, a-priori very different, properties of LERF and amenability. This situation is perhaps reminiscent of the way in which sofic groups simultaneously generalize the notions of residual finiteness and amenability. In these terms the analogue of Theorem \ref{thm:main} is the following

\begin{theorem}\label{Asep thm}
A countable group $G$ is A-separable if and only if for a generic action of $G$ on a countable set, the action on every orbit is amenable.
\end{theorem}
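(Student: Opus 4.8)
The plan is to reduce the statement to a single orbit and then recognize ``amenability of that orbit'' as a $G_\delta$ condition whose approximating open sets are dense exactly when $\Cam(G)$ is dense in $\Sub(G)$. Fix a basepoint $x_0 \in X$ and write $s\colon \Hom(G,X!) \to \Sub(G)$, $s(\phi) = \{g \in G : \phi(g)x_0 = x_0\}$, for the stabilizer map; it is continuous since each set $\{\phi : \phi(g)x_0 = x_0\}$ is clopen. As the $G$-action on an orbit $Gx$ is the transitive action on $G/s_x(\phi)$, that orbit is amenable iff $s_x(\phi)$ is co-amenable, so ``every orbit of $\phi$ is amenable'' is the set $\mathcal A = \bigcap_{x \in X}\{\phi : Gx \text{ is amenable}\}$. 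A countable intersection is co-meager iff each factor is, and each factor is a homeomorphic image of the factor at $x_0$ under the (homeomorphic, transitive) $X!$-action, so all are simultaneously (non-)co-meager; it thus suffices to analyse $\mathcal A_{x_0} = \{\phi : Gx_0 \text{ is amenable}\}$.

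First I would record the descriptive form of $\mathcal A_{x_0}$. By Reiter's characterisation of amenable actions (see Definitions \ref{def:amenable_action}, \ref{def:coam}), $Gx_0$ is amenable iff for every finite $S \subset G$ and every rational $\epsilon > 0$ there is a finitely supported probability measure $\mu$ on $Gx_0$ with $\|\phi(s)_*\mu - \mu\|_1 < \epsilon$ for all $s \in S$. Writing $P_{S,\epsilon}$ for the set of such $\phi$, one has $\mathcal A_{x_0} = \bigcap_{S,\epsilon} P_{S,\epsilon}$, a countable intersection. Each $P_{S,\epsilon}$ is \emph{open}: a witnessing $\mu$ is supported on finitely many points $a = \phi(w_a)x_0$, and both the membership $a \in Gx_0$ and the quantity $\|\phi(s)_*\mu-\mu\|_1$ depend only on the finitely many evaluations $\phi(w_a)x_0$ and $\phi(s)(a)$, so any $\phi'$ agreeing with $\phi$ on these retains $\mu$ as a witness. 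Hence $\mathcal A_{x_0}$ is $G_\delta$, and the whole problem becomes: the $P_{S,\epsilon}$ are simultaneously dense iff $\Cam(G)$ is dense.

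For the forward implication assume $G$ is A-separable. A basic open $U \subset \Hom(G,X!)$ is a finite partial action, which imposes on $s(\phi)$ only finitely many membership/non-membership constraints (read off from closing, resp.\ non-closing, paths based at $x_0$); these cut out a nonempty basic clopen $V_U \subset \Sub(G)$. The realization lemma underlying Theorem~\ref{thm:main} lets one realize any $\Delta \in V_U$ as $s(\phi)$ for some $\phi \in U$, by modelling the orbit of $x_0$ on $G/\Delta$ and completing arbitrarily on the rest of $X$. Density of $\Cam(G)$ supplies a co-amenable $\Delta \in V_U$, and the resulting $\phi \in U$ has amenable orbit $Gx_0 \cong G/\Delta$, hence lies in $P_{S,\epsilon}$. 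So each $P_{S,\epsilon}$ is dense open, $\mathcal A_{x_0}$ is co-meager, and therefore so is $\mathcal A$.

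For the converse assume $G$ is not A-separable, so some nonempty basic clopen $V \subset \Sub(G)$ misses $\Cam(G)$. Encoding the defining constraints of $V$ as evaluations of $\phi$ at $x_0$ yields a nonempty basic open $U$ with $s(U) \subset V$; for every $\phi \in U$ the stabilizer $s(\phi) \in V$ is not co-amenable, so $Gx_0$ is not amenable and $\phi \notin \mathcal A$. Thus $\mathcal A$ avoids the nonempty open set $U$, so it is not dense and a fortiori not co-meager. I expect the main obstacle to be the forward direction, and within it the realization lemma: converting the finite combinatorial data of a basic open subset of $\Hom(G,X!)$ into the exact Chabauty-clopen $V_U$, embedding the partial orbit of $x_0$ into $G/\Delta$, and completing it to a genuine permutation representation. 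This bookkeeping — shared with the proof of Theorem~\ref{thm:main} — is precisely what transfers density in $\Sub(G)$ to density of the open approximants $P_{S,\epsilon}$; the remaining ingredient is the soft observation, via Reiter, that amenability of an orbit is a $G_\delta$ property.
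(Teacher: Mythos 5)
Your proposal is correct and follows essentially the same route as the paper: both express the target set as $\bigcap_{x\in X} G_x^{-1}(\Cam(G))$, prove it is $G_\delta$ by observing that an $(\epsilon,\Omega)$-almost-invariance witness (your Reiter measures, the paper's F\o lner sets) depends on only finitely many values of $\phi$, and transfer density between $\Sub(G)$ and $\Hom(G,X!)$ via the continuity, surjectivity and openness of the stabilizer map (Lemma \ref{lem1}), which is exactly your ``realization lemma''. Your reduction to a single basepoint $x_0$ by homogeneity of the $X!$-action, versus the paper's direct argument for every $x \in X$, is only a cosmetic difference.
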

\noindent Here are some properties of A-separable groups.
\begin{theorem} \label{basic_prop}
The following properties hold for the class of A-separable groups: 
\begin{itemize}
\item LERF groups and amenable groups are A-separable.
\item The class of A-separable groups is closed under free products.
\item There exist A-separable groups which are neither LERF nor amenable.
\item A group with property (T) is A-separable if and only if it is LERF.
\item Higher rank lattices in non-compact simple Lie groups which satisfy the congruence subgroup property are never A-separable. 
\end{itemize}
\end{theorem}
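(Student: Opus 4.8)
The plan is to treat the five assertions in increasing order of difficulty, using throughout the standard reformulation that $H\le G$ is co-amenable precisely when the Schreier graph $\Sch(G,H,S)$ is amenable, equivalently when the trivial representation is weakly contained in the quasi-regular representation on $\ell^2(G/H)$. The two easy inclusions come first. If $G$ is amenable then every coset space carries a $G$-invariant mean, so \emph{every} subgroup is co-amenable and density is automatic. If $G$ is LERF then finite-index subgroups are co-amenable (the counting measure on a finite quotient is invariant) and they are dense: in a basic neighbourhood of $\Delta$ prescribing $s_1,\dots,s_k\in\Delta$ and $t_1,\dots,t_m\notin\Delta$, put $K=\langle s_1,\dots,s_k\rangle$; as each $t_j\notin K$ and $K$ is finitely generated, LERF gives finite-index $L_j\supseteq K$ with $t_j\notin L_j$, and $\bigcap_j L_j$ lies in the neighbourhood (no finite generation of $G$ is needed). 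Running the same separation scheme with co-amenable subgroups in place of finite-index ones records the working form of A-separability used below: for every finitely generated $A\le G$ and every finite $F\subseteq G\setminus A$ there is a co-amenable $A'\ge A$ with $A'\cap F=\emptyset$.

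The free-product statement is the crux. Write $\Gamma=G*H$. A basic neighbourhood of $\Delta\le\Gamma$ is again governed by a finitely generated $K=\langle s_1,\dots,s_k\rangle\le\Delta$ and excluded elements $t_1,\dots,t_m\notin\Delta$, and I must build a co-amenable $L\supseteq K$ with $t_j\notin L$ --- equivalently a transitive $\Gamma$-set $(Z,z_0)$ with amenable Schreier graph in which $K$ fixes $z_0$ while each $t_j$ moves it. The difficulty is that co-amenability behaves badly under free products: the factors are themselves far from co-amenable in $\Gamma$, and amalgamating two amenable actions typically yields the non-amenable action of a free group. My plan is structural. Decompose $K$ via the Kurosh subgroup theorem into a free part together with vertex pieces $K\cap G^{g}$ and $K\cap H^{h}$, each a finitely generated subgroup of a conjugate of a factor. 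Enlarge every such piece, by the working form of A-separability of $G$ and of $H$, to a co-amenable subgroup of that conjugate that still excludes the finitely many relevant translates of the $t_j$. Finally reassemble the enlarged pieces along the Bass--Serre tree into $L$. What must then be shown is that the Schreier graph of $\Gamma/L$ is Følner: it fibers over the tree of pieces with amenable vertex fibres, so Følner sets can be assembled from amenability inside each fibre together with a controlled, amenable branching of the tree. Establishing this Følner estimate --- that amenability survives the amalgamation once the branching is kept under control --- is the main obstacle and is where essentially all the work lies.

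Granting free-product closure, two further items are short. For an A-separable group that is neither LERF nor amenable, take an amenable non-LERF group $A$ (these exist) and form $A*\Z$: it is A-separable by the first two items; it is non-amenable since $A*\Z$ contains a non-abelian free subgroup; and it is not LERF, because a non-separable finitely generated $C\le A$ stays non-separable in $A*\Z$ (intersect any finite-index subgroup with the factor $A$). For the property (T) dichotomy, LERF implies A-separable by the first item with no extra hypothesis. Conversely, if $G$ has property (T) and $H\le G$ is co-amenable, then $1_G$ is weakly contained in $\ell^2(G/H)$, hence by (T) is contained, producing a nonzero $G$-invariant function in $\ell^2(G/H)$; on the transitive $G$-set $G/H$ such a function is a nonzero constant, forcing $[G:H]<\infty$. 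Thus for property (T) groups co-amenable coincides with finite-index, and since such groups are finitely generated, Theorem \ref{thm:LERF} converts density of co-amenable subgroups into the LERF property.

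Finally, a higher-rank lattice $G$ in a non-compact simple Lie group has property (T), so by the previous item it is A-separable if and only if finite-index subgroups are dense, and I claim this fails under the congruence subgroup property. By CSP every finite-index subgroup is a congruence subgroup. By the Tits alternative $G$ contains a finitely generated Zariski-dense subgroup $\Delta$ of infinite index; choose $g\notin\Delta$. By strong approximation the congruence closure of the Zariski-dense $\Delta$ is all of $G$, so the only congruence subgroup containing $\Delta$ is $G$ itself. Hence the neighbourhood of $\Delta$ cut out by a generating set of $\Delta$ together with the condition $g\notin(\cdot)$ contains no finite-index subgroup; finite-index subgroups are therefore not dense, and $G$ is not A-separable.
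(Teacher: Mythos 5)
Your items (1), (3), (4) and (5) track the paper's proofs closely and are essentially correct: LERF/amenable density, the free-product example of a non-LERF non-amenable A-separable group (the paper uses $BS(1,n)*BS(1,n)$; your $A*\Z$ works the same way, though you should exhibit an amenable non-LERF $A$, e.g.\ $BS(1,2)$ as in the paper), the property (T) argument via almost-invariant vectors, and the lattice argument. One small inaccuracy in (5): strong approximation only yields that the congruence closure of a Zariski-dense subgroup $\Delta$ has \emph{finite index}, not that it equals $G$; the fix is to pick $t$ in the congruence closure of $\Delta$ but outside $\Delta$ (possible since $\Delta$ has infinite index), and note that by CSP any finite-index subgroup containing the generators of $\Delta$ is a congruence subgroup, hence contains the congruence closure and so contains $t$; thus the basic neighborhood of $\Delta$ determined by the generators and the exclusion of $t$ meets no finite-index subgroup. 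Your ``working form'' of A-separability is also fine, since co-amenability passes to overgroups by pushing the invariant mean forward.

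The genuine gap is the free-product item, which you yourself identify as the crux: the Følner estimate for the Kurosh-reassembled subgroup $L$ is exactly the part you do not prove, and as sketched it is not just incomplete but unworkable. Test it on $\Gamma=\Z*\Z=F_2$ with $K$ trivial: every vertex piece lies in a conjugate of $\Z$, any co-amenable enlargement inside those conjugates reassembles along the Bass--Serre tree into a finitely generated subgroup of infinite index, and the Schreier graph of such a subgroup of $F_2$ is a finite Stallings core with $4$-regular trees attached --- non-amenable, with no mechanism available to ``keep the branching under control.'' Indeed, in $F_2$ the finitely generated co-amenable subgroups are precisely the finite-index ones, and in the cases the theorem is really for (factors amenable but not LERF, such as $BS(1,n)$) you cannot retreat to finite-index pieces either. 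The paper sidesteps the construction of a co-amenable subgroup containing $K$ entirely: by Theorem \ref{Asep thm}, A-separability of $G*H$ is equivalent to the generic $\phi*\psi\in\Hom(G*H,X!)$ being amenable on every orbit, so it suffices that each \emph{finitary} set $\Sigma(x,\epsilon,S,T)$ (the orbit of $x$ contains an $(\epsilon,S\cup T)$-F\o lner set, for finite $S\subset G$, $T\subset H$) be open and dense. Density is obtained by a local perturbation: take a large $\tau$-F\o lner set $F$ inside an infinite $\tau$-orbit and conjugate the $G$-action by an involution $\xi$ that moves all but boundedly many points of $F$ onto fixed points of the $G$-action (supplied by Lemma \ref{lem2bis}), making $F$ simultaneously $(\epsilon,S)$- and $(\epsilon,T)$-F\o lner, while a minimal-length trace argument keeps $F$ inside the orbit of $x$; a separate easy case handles the situation where all factor-orbits in the orbit of $x$ are finite. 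The co-amenable subgroups that end up approximating a given subgroup are stabilizers of these perturbed actions and are typically \emph{not} finitely generated --- which is precisely why a finitely generated Kurosh assembly cannot succeed, and why the reformulation through generic actions is the missing key idea in your proposal.
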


The paper is arranged as follows. Section \ref{sec:genprop} is dedicated to a systematic investigation of the topological spaces $\Sub(G)$, $\Hom(G,X!)$ and the standard stabilizer map $\Hom(G,X!) \arrow \Sub(G)$ between them. In Section \ref{sec:LERF} we prove Theorem \ref{thm:LERF}. Chapter \ref{sec:genact} is dedicated to solitary groups and there we prove Theorems \ref{thm:main} and \ref{thm:solitary_prop}. Finally, in chapter \ref{sec:Asep} we prove Theorems \ref{Asep thm} and \ref{basic_prop} and give examples of non amenable, non LERF groups that are A-separable. The results in this work also to appear as part of the Ph.D. dissertation of the second author \cite{Kitroser:thesis}. 

\section{Dense and generic properties of actions and subgroups}\label{sec:genprop}

\subsection{The space of permutation representations $\mathbf{\Hom(G,X!)}$}
Let $X$ be a countable set and $X!$ the full symmetric group of all bijections of $X$ onto itself. We endow $X!$ with the topology of pointwise convergence which makes it into a Polish topological group. In other words a topological group that is separable, and admits a complete metric. The latter fact is important for us because it shows that $X!$ is a Baire space; though we will never consider any specific metric. An explicit basis for the topology can be given by the sets
\[ \U(\alpha,A) := \left\{\beta\in X!\mid \beta\rest{A} = \alpha\rest{A}\right\}\quad (\alpha\in X!, A\subset X\text{ finite}).\]
On $X!^{n}$ we will always put the product topology, which is still Polish for every $n \in \N \cup \{\infty\}$. 

Let $G$ be a countable group with a given presentation $G = \la S\mid R\ra$ where $S = \{s_1,s_2,\dots\}$. Then, we can identify $\Hom(G,X!)$ with a closed subset of $X!^S$ via the following embedding:
\begin{align*}
&\Hom(G,X!) \to \{ \alpha\in X!^{S}\mid \forall w\in R: w(\alpha) = 1_{X} \}\subset X!^{S}\\
&\rho\mapsto \{\rho(s_i)\}_{s_i \in S}
\end{align*}
Thus $\Hom(G,X!)$ is a closed subspace of $X!^S$ and the induced topology makes it into a Polish space (note that this topology does not depend on the choice of presentation). A basis for the topology on $\Hom(G,X!)$ is given by:
\begin{align*}
& \O(\rho,T,A) := \{ \sigma\in\Hom(G,X!)\mid  \forall t\in T: \sigma(t)\rest{A} = \rho(t)\rest{A} \} \\
& (\rho\in\Hom(G,X!),T\subset G \text{ and } A\subset X\text{ both finite}).
\end{align*}
If $S$ itself happens to be finite, then the sets $\O(\rho,S,A)$ form a basis for $\Hom(G,X!)$.

As mentioned in the introduction (see Equation \ref{eqn:action}) the group $X!$ acts, from the left, on $\Hom(G,X!)$ and the orbits of this action are exactly the standard isomorphism classes of permutation representations. It is well known that two permutation representations are isomorphic if and only if they contain the same transitive components, appearing with the same multiplicity. The transitive components, in turn, are isomorphic to quasiregular actions of the form $G \acts{} G/H$ for some $H \in \Sub(G)$. If $\{H_i\} \subset \Sub(G)$ is a countable or finite collection of subgroups and if $d_i \in \N \cup \{\infty\}$ we will denote the (isomorphism class of) the permutation representation that has exactly $d_i$ transitive components isomorphic to $G \acts{\eta_i} G/H_i$ by 
$$\bigsqcup_{i} d_i \cdot \left(G/H_i \right) = \bigsqcup_i d_i \cdot \eta_i.$$
Some care is due with this notation. It is not always possible to  identify such an action with an element of $\Hom(G,X!)$ because if the sum is finite and $H_i$ are all of finite index then the underlying set is finite. When this is not the case we can identify such an action with an element of $\Hom(G,X!)$ via an arbitrary bijection between $\sqcup_{i} G/H_i \cong X$. Different choices of this bijection will yield all the different points in the corresponding $X!$ orbit. 

\noindent We will make frequent use of the following:
\begin{definition}
Let $G\acts{} X, x\in X, g_1,\dots,g_n\in G$ and let $w= w_k w_{k-1}\cdots w_1$ be a word over $\{g_1^{\pm 1},\dots,g_n^{\pm 1}\}$. The \emph{trace} of $x$ under $w$ is the set:
\[ \trace{x}{w} = \{ x, w_1 x,\dots, w_{k-1}\cdots w_1 x, wx\}. \]
\end{definition}

\subsection{The space of subgroups}
Let $G$ be a countable group and consider the space $\{0,1\}^G$ of subsets of $G$, equipped with the product topology. This is a compact, metrizable space. Let $\Sub(G)$ denote the set of all subgroups of $G$. It is easy to verify that $\Sub(G)$ is closed in $\{0,1\}^G$ and so it is a compact, metrizable space. The induced topology on $\Sub(G)$ is called the Chabauty topology and a basis for this topology can be given by the sets
\[ \W(H,\Omega) = \{ K\in\Sub(G)\mid K\cap\Omega = H\cap\Omega \}\quad (H\in\Sub(G), \Omega \subset G\text{ finite}). \]
If $H \in \Sub(G)$ we denote by
\[ \Env(H) = \{K \in \Sub(G) \mid K \ge H\}\]
the \emph{envelope} of $H$. Both subsets $\Sub(H)$ and $\Env(H)$ are closed in $\Sub(G)$. If $H$ is finitely generated then $\Env(H)$ is also open. Denoting by $\Sub^{\fg}(G) \subset \Sub(G)$ the collection of finitely generated subgroups, it is easy to check that the collection 
\[ \{\Env(H) \mid H \in \Sub^{\fg}(G)\} \ \bigcup \ \{\Sub(G) \minus \Env(H) \mid H \in \Sub^{\fg}(G)\}\] 
forms another basis for the topology of $\Sub(G)$. 

\subsection{Isolated subgroups}
Let $\Is(G)$ and $\Occ(G)$ denote the isolated points of $\Sub(G)$ and the subgroups of $G$ with open conjugacy classes respectively. Note that a subgroup $H$ is in $\Occ(G)$ if and only if there is an open neighborhood of $H$, consisting of only conjugates of $H$. Both subsets are open and conjugation invariant.
\begin{proposition}\label{prop:is}
Here are some basic properties of these subgroups:
\begin{enumerate}
\item \label{itm:is_in_Occ} $\Is(G) = \Occ(G)$.
\item \label{itm:Occ_fg} Every $H \in \Is(G)$ is finitely generated.
\item \label{itm:env} if $H \in \Sub^{\fg}(G)$ then $\Env(H)$ is an open neighborhood of $H$. 
\item \label{itm:fi_Is} When $G$ itself is finitely generated then every finite index subgroup is isolated. 
\item \label{itm:fin_env} If $H \in \Sub^{\fg}(G)$ and $\left| \Env(H) \right| < \infty$ then $H$ is isolated. In particular every finitely generated maximal subgroup is isolated. 
\end{enumerate}
\end{proposition}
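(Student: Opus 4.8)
The plan is to treat the five items in order of logical dependence, beginning with the two ``combinatorial'' facts (\ref{itm:env}) and (\ref{itm:Occ_fg}) that only require unwinding the definition of the basis $\W(\cdot,\cdot)$, then leveraging them for (\ref{itm:fin_env}) and (\ref{itm:fi_Is}), and finally proving the identification (\ref{itm:is_in_Occ}), where the one genuinely nontrivial step lies.

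For (\ref{itm:env}), if $H = \la \Omega_0 \ra$ with $\Omega_0 \subset G$ finite, I would simply observe that $\W(H,\Omega_0) = \Env(H)$: a subgroup $K$ satisfies $K \cap \Omega_0 = H \cap \Omega_0 = \Omega_0$ precisely when $\Omega_0 \subseteq K$, that is, when $H \le K$. Since $\W(H,\Omega_0)$ is a basic open set containing $H$, this exhibits $\Env(H)$ as an open neighbourhood of $H$. For (\ref{itm:Occ_fg}), suppose $H \in \Is(G)$ and choose a finite $\Omega$ with $\W(H,\Omega) = \{H\}$. The finitely generated subgroup $H_0 := \la H \cap \Omega \ra$ is contained in $H$, so $H_0 \cap \Omega \subseteq H \cap \Omega$; conversely $H \cap \Omega \subseteq H_0$ forces the reverse inclusion, whence $H_0 \cap \Omega = H \cap \Omega$. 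Thus $H_0 \in \W(H,\Omega) = \{H\}$, so $H = H_0$ is finitely generated.

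For (\ref{itm:fin_env}), I would start from a finite generating set $\Omega_0$ of $H$, so that $\W(H,\Omega_0) = \Env(H) = \{H, K_2, \dots, K_m\}$ is finite. Each $K_i \neq H$ strictly contains $H$, so one can pick a witness $x_i \in K_i \minus H$; setting $\Omega = \Omega_0 \cup \{x_2,\dots,x_m\}$, any $K \in \W(H,\Omega)$ contains $H$ (because $\Omega_0 \subseteq H \cap \Omega = K \cap \Omega \subseteq K$) and, since no $x_i$ lies in $H \cap \Omega$, cannot equal any $K_i$ with $i \ge 2$; hence $\W(H,\Omega) = \{H\}$. The ``maximal'' case is just the instance $\Env(H) = \{H,G\}$. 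Item (\ref{itm:fi_Is}) then follows at once: when $G$ is finitely generated a finite index subgroup $H$ is finitely generated (Nielsen--Schreier), and the overgroups of $H$ are the finitely many subgroups of the finite coset space, so $\Env(H)$ is finite and (\ref{itm:fin_env}) applies.

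Finally, for (\ref{itm:is_in_Occ}) the inclusion $\Is(G) \subseteq \Occ(G)$ is soft: conjugation by each $g\in G$ is a self-homeomorphism of $\Sub(G)$, so if $\{H\}$ is open then so is each $\{gHg^{-1}\}$, and the conjugacy class, being their union, is open. The reverse inclusion $\Occ(G) \subseteq \Is(G)$ is the main obstacle and is where I expect the real work. Let $C$ be the (by hypothesis open) conjugacy class of $H \in \Occ(G)$. The key observations are that $C$ is \emph{countable}, being the image of $G$ under $g \mapsto gHg^{-1}$, and that, as an open subset of the compact metrizable --- hence Polish --- space $\Sub(G)$, it is itself a Baire space. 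A nonempty countable Baire space is not meager in itself, so it must contain a point $K$ isolated in $C$. Since $G$ acts transitively on $C$ through conjugation homeomorphisms, and homeomorphisms carry isolated points to isolated points, \emph{every} point of $C$ is isolated in $C$; and because $C$ is open in $\Sub(G)$, being isolated in $C$ coincides with being isolated in $\Sub(G)$. In particular $H \in \Is(G)$. I expect the only delicate point here to be the soft-topology bookkeeping --- verifying that an open subset of a compact metric space is Baire and that a countable Baire space has an isolated point --- rather than any group-theoretic difficulty.
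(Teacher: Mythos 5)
Your proof is correct, and on the one substantive item --- the inclusion $\Occ(G) \subseteq \Is(G)$ in (1) --- it is essentially the paper's argument: the conjugacy class $[H]$ is a countable open subset of the compact metrizable space $\Sub(G)$, hence a Baire space, so it cannot be a countable union of nowhere dense (closed) singletons; thus some point of $[H]$ is isolated, and the transitive action of $G$ by conjugation homeomorphisms spreads isolation to every point of $[H]$, in particular to $H$. Where you diverge is in the routine items, and the differences are worth noting. For (2) the paper argues contrapositively: if $H$ is not finitely generated, write it as the union of a strictly increasing chain $H_1 < H_2 < \cdots$ of finitely generated subgroups, so that $H_i \to H$ in $\Sub(G)$ with $H_i \neq H$, whence $H$ is not isolated. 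Your direct argument --- that $\W(H,\Omega) = \{H\}$ forces $H = \la H \cap \Omega \ra$ --- is a clean alternative and gives slightly more: any isolating window $\Omega$ hands you an explicit finite generating set $H \cap \Omega$. For (5) the paper simply says it follows from (3): since $\Sub(G)$ is metrizable, the finitely many points of $\Env(H) \minus \{H\}$ form a closed set, and deleting them from the open set $\Env(H)$ leaves $\{H\}$ open; your construction with witnesses $x_i \in K_i \minus H$ packed into an enlarged window $\Omega = \Omega_0 \cup \{x_2,\dots,x_m\}$ proves the same thing more concretely, and both routes then yield (4) identically, via finiteness of $\Env(H)$ for a finite-index subgroup. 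One cosmetic slip: the fact that a finite-index subgroup of a finitely generated group is finitely generated is Schreier's lemma (Reidemeister--Schreier generators), not the Nielsen--Schreier theorem, which concerns subgroups of free groups; the fact itself is correct and is all that (4) requires, so this does not affect the validity of your proof.
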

\begin{proof}
It is clear that $\Is(G) \subset \Occ(G)$ and that both sets are open. The opposite inclusion follows from Baire's theorem: Let $K \in \Occ(G)$ and let $[K] = \{g K g^{-1} \ | \ g \in G\}$ be its conjugacy class, which is open by definition. Since $G$ is countable, $[K]$ is a countable (or finite) union of closed points so by Baire's theorem one of them has to be open. Since $G$ is transitive on $[K]$ all of these points are open and in particular $K \in \Is(G)$. This proves (\ref{itm:is_in_Occ}). If $H$ is not finitely generated then we can find a sequence of finitely generated subgroups $H_1 < H_2 < H_3 < \ldots$ with $\cup_i H_i = H$. Clearly $H_i \to H$ in the topology of $\Sub(G)$ but none of these subgroups is equal to $H$ because $H$ is not finitely generated. (\ref{itm:env}) is clear and (\ref{itm:fin_env}) follows directly from (\ref{itm:env}). Finally if $G$ itself is finitely generated then so is every finite index subgroup and (\ref{itm:fi_Is}) follows directly from (\ref{itm:fin_env}).
\end{proof}
Thus, for a finitely generated group $G$ isolated subgroups form a class of subgroups that sits between the finitely generated subgroups and the subgroups of finite index, namely:
\[ \Sub^{\fg}(G) \subset  \Is(G) \subset \Sub^{\fg}(G). \]
We find it very useful to think of isolated subgroups as generalizations of finite index subgroups. 

\subsection{Generic properties}
A subset $A \subset Y$ in a Polish space is called {\it{generic}} (or alternatively {\it{residual}} or {\it{co-meager}}) if it contains a countable intersection of dense open sets. By Baire's category theorem generic sets are always dense. We say that {\it{the property (P) is generic in $Y$}} or that {\it{a generic element of $Y$ has the property (P)}} if the set $\{ y\in Y\mid y \text{ has the property (P)} \}$ is generic in $Y$. 

In this paper we will be interested in generic properties of permutation representations $\Hom(G,X!)$. The simplest example is $\Hom(\Z,X!) \cong X!$. The following, well known proposition, summarizes the generic properties of this space. Its proof is an exercise in Baire's category theorem which we leave to the readers. We chose to mention it here because our main theorem, and its proof, are basically far reaching generalizations of this fact. 
\begin{proposition}
$X!$ has a residual conjugacy class. This conjugacy class can be described explicitly: 
$$\tau^{*}=\bigsqcup_{n \in \N} \infty \cdot \left(\Z/n\Z\right)$$
\end{proposition}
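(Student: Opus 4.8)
The plan is to exhibit the orbit $X!(\tau^*)$ directly as a countable intersection of dense open subsets of $X! \cong \Hom(\Z,X!)$, so that it is a dense $G_\delta$ and therefore residual; uniqueness is then free, since two residual subsets of a Baire space must intersect whereas distinct conjugacy classes are disjoint. I identify a class in $\Hom(\Z,X!)$ with the single permutation $\alpha$ that is the image of $1 \in \Z$, and recall that $\alpha,\beta$ are conjugate in $X!$ exactly when they share the same cycle type, i.e. the same cardinality of cycles of each length $\ell \in \{1,2,\dots\}\cup\{\infty\}$. In these terms $\tau^*$ is the permutation with precisely $\aleph_0$ cycles of every finite length and no infinite cycle, so it suffices to carve out this cycle type by countably many dense open conditions.

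First I would rule out infinite cycles. For each $x\in X$ put $P_x = \bigcup_{n\ge 1}\{\alpha\in X!\mid \alpha^{n}x = x\}$, the permutations under which $x$ is periodic. Each condition $\alpha^{n}x = x$ involves only finitely many values of $\alpha$ and so is clopen, whence $P_x$ is open. For density, inside an arbitrary basic neighbourhood $\U(\alpha,A)$ the constraint $\beta|_A = \alpha|_A$ is a finite partial injection, and I would close the finite forced orbit of $x$ into a cycle and extend arbitrarily to a permutation of $X$; this lands in $P_x \cap \U(\alpha,A)$. Since $X$ is countable, $\bigcap_{x\in X}P_x$ is a dense $G_\delta$, and it is exactly the set of permutations all of whose cycles are finite.

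Next I would force every finite length to recur infinitely often. For $n,k\ge 1$ let $R_{n,k}$ be the set of $\alpha$ possessing at least $k$ distinct cycles of length exactly $n$; fixing $k$ disjoint cyclically ordered $n$-sets, the requirement that $\alpha$ realize them is clopen, and $R_{n,k}$ is the (open) union of these conditions over all such choices. For density I would, inside $\U(\alpha,A)$, install $k$ fresh disjoint $n$-cycles on points chosen outside the finite set $A\cup\alpha(A)$ and then complete to a bijection. Thus $\bigcap_{n,k\ge 1}R_{n,k}$ is a dense $G_\delta$, namely the permutations having infinitely many cycles of each finite length $n$ (and ``infinitely many'' forces the cardinality $\aleph_0$, as $X$ is countable).

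Intersecting the two families yields the dense $G_\delta$ set of permutations all of whose cycles are finite and each of whose finite lengths occurs exactly $\aleph_0$ times --- precisely the cycle type of $\tau^*$ --- so $X!(\tau^*) = \bigcap_{x}P_x \cap \bigcap_{n,k}R_{n,k}$ is residual. The main obstacle, such as it is, lies entirely in the two density verifications: one must check that the finite partial injection dictated by a basic open set can always be enlarged to a full permutation of $X$ carrying the prescribed cycles, which rests on the finiteness of $A$ together with the infinitude of $X$. A secondary point worth flagging is that the conditions $R_{n,k}$ alone do not determine a conjugacy class, since a permutation could have infinitely many cycles of each finite length together with a stray infinite cycle; the sets $P_x$ are exactly what removes this ambiguity and pins the intersection down to the single class of $\tau^*$.
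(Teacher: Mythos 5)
Your proof is correct and is exactly the Baire-category exercise the paper intends here (the paper omits the proof, saying only that it is ``an exercise in Baire's category theorem''): your sets $P_x$ and $R_{n,k}$ are open and dense for the reasons you give, their intersection is precisely the set of permutations with $\aleph_0$ cycles of each finite length and no infinite cycle, and conjugacy in $X!$ is determined by cycle type. Your two families also foreshadow the paper's general machinery --- the $P_x$ play the role of Corollary \ref{cor:dense_dense} applied to the dense open set of finite-index subgroups of $\Z$, and the $R_{n,k}$ the role of Lemma \ref{lem2} --- so nothing further is needed.
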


In terms of the definition below, the above proposition just says that $G=\Z$ admits a generic permutation representation. 
\begin{definition}
We say that $G$ {\it{admits a generic permutation representation}} if there exists a permutation representation $\tau^{*} \in\Hom(G,X!)$ whose orbit under the action $X! \acts{} \Hom(G,X!)$ is residual in $\Hom(G,X!)$.
\end{definition}

\subsection{Properties of the stabilizer map}
Given a permutation representation $\sigma \in \Hom(G,X!)$ and a point $x \in X$, we denote by $G_x(\sigma) = \{g \in G \ | \ \sigma(g)x=x \}$ the stabilizer of this point. Fixing $x$ this gives rise to a {\it{stabilizer map}} 
\begin{eqnarray}
G_x: \Hom(G,X!)  & \arrow &  \Sub(G)\\
\nonumber \sigma & \mapsto &  G_x(\sigma)
\end{eqnarray}

\begin{lemma}\label{lem1} (Main lemma)
For every $x\in X$, the stabilizer map $G_x: \Hom(G,X!)  \to  \Sub(G)$ is continuous, surjective and open. 
\end{lemma}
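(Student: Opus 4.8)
My plan is to verify the three properties separately, treating continuity and surjectivity as warm-ups and reserving the real work for openness.

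\textbf{Continuity.} I would show that the preimage of each basic open set $\W(H,\Omega)$ is open. The key observation is that whether $g\in G_x(\sigma)$---that is, whether $\sigma(g)x=x$---depends on only finitely many values of $\sigma$: writing $g$ as a word $w_g$ over $S^{\pm 1}$, the point $\sigma(g)x$ is computed by following the trace $\trace{x}{w_g}$, so it is determined by the restrictions $\sigma(s)\rest{A_g}$, where $A_g:=\trace{x}{w_g}$ and $s$ ranges over the finitely many letters $T_g$ of $w_g$. Hence, given $\sigma$ with $\Delta:=G_x(\sigma)$ satisfying $\Delta\cap\Omega=H\cap\Omega$, if I set $T=\bigcup_{g\in\Omega}T_g$ and $A=\bigcup_{g\in\Omega}A_g$ (both finite), then every $\sigma'\in\O(\sigma,T,A)$ satisfies $\sigma'(g)x=\sigma(g)x$ for all $g\in\Omega$, so $G_x(\sigma')\cap\Omega=\Delta\cap\Omega=H\cap\Omega$ and $G_x(\sigma')\in\W(H,\Omega)$. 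Thus $\O(\sigma,T,A)\subset G_x^{-1}(\W(H,\Omega))$.

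\textbf{Surjectivity.} Given $H\in\Sub(G)$, I realize the quasiregular action $G\acts{}G/H$ via a bijection sending the trivial coset to $x$, padded by infinitely many fixed points so that the underlying set is identified with $X$. The resulting $\sigma\in\Hom(G,X!)$ has $G_x(\sigma)=H$.

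\textbf{Openness (the main point).} Given $\sigma$ and a basic neighborhood $\O(\sigma,T,A)$, write $\Delta=G_x(\sigma)$; I will produce a finite $\Omega\subset G$ with $\W(\Delta,\Omega)\subset G_x(\O(\sigma,T,A))$, which exhibits the image as a neighborhood of each of its points. Consider the finite set $P\subset X$ of points reachable from $x$ along $T$-labelled paths staying inside $A$; each $p\in P$ comes with a word $g_p$ over $T^{\pm 1}$ satisfying $\sigma(g_p)x=p$ and $\trace{x}{g_p}\subset A$, with $g_x=1$, and for $p,q\in P$ one has $p=q$ iff $g_q^{-1}g_p\in\Delta$. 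Let $\Omega$ consist of all such quotients $g_q^{-1}g_p$ together with the one-step extensions $g_q^{-1}t g_p$ for $t\in T$. Now suppose $K\cap\Omega=\Delta\cap\Omega$. I build $\sigma'\in\O(\sigma,T,A)$ with $G_x(\sigma')=K$ by taking the quasiregular action $G\acts{}G/K$ (padded by filler orbits to fill $X$) and grafting the fixed finite data onto it: send each $p\in P$ to the coset $g_p K$. The condition on $\Omega$ guarantees this assignment is well defined and injective and that it intertwines the partial maps $\sigma(t)\rest{A}$ with the $K$-action exactly as it does for $\Delta$; since finite partial injections of a countable set extend to permutations, I complete it to a genuine $\sigma'$, which agrees with $\sigma$ on $(T,A)$ by construction and has $G_x(\sigma')=K$.

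The crux---and the only delicate step---is the choice of $\Omega$ and the verification that $K\cap\Omega=\Delta\cap\Omega$ suffices for the grafting to be consistent: one must ensure that every coincidence and non-coincidence among the finitely many grafted points forced by the fixed data is matched by the coset structure of $G/K$, and that the one-step $T$-images create no spurious collisions. Once $\Omega$ captures all of these finitely many membership constraints, building $\sigma'$ as an honest quotient action on $G/K$ makes it automatically a homomorphism, so no relation-checking is required.
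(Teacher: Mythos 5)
Your continuity and surjectivity arguments are correct (continuity is in fact immediate from the paper's basis $\O(\rho,T,A)$ with $T$ an \emph{arbitrary} finite subset of $G$ --- take $T=\Omega$, $A=\{x\}$ --- so your word-tracing is only needed if one insists on constraining generators alone), and your openness strategy is the same as the paper's: replace the orbit of $x$ by the coset space $G/K$, grafted along the fixed finite data, with a Chabauty neighborhood $\W(\Delta,\Omega)$ guaranteeing consistency. The gap is in your choice of $\Omega$: it omits the \emph{two-step} comparisons $g_{q}^{-1}t'^{-1}t\,g_p$ ($t,t'\in T$, $p,q\in P$), which are exactly what is needed when two one-step $T$-images of points of $P$ collide at a point lying \emph{outside} $A$. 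Such a collision is part of the data that every $\sigma'\in\O(\sigma,T,A)$ must reproduce, since $\sigma'(t)p=\sigma(t)p$ is required for all $p\in A$ whether or not the image lies in $A$; but your $\Omega$ never tests whether $g_{p'}^{-1}t'^{-1}t\,g_p\in K$. Concretely, let $G=F_2=\la a,b\ra$, $\Delta=\la b^{-1}a\ra$, let $\sigma$ contain the orbit $G/\Delta$ with $x$ the trivial coset, and take $T=\{a^{\pm1},b^{\pm1}\}$, $A=\{x\}$. Then $\sigma(a)x=\sigma(b)x=y\neq x$, so \emph{every} $\sigma'\in\O(\sigma,T,A)$ satisfies $b^{-1}a\in G_x(\sigma')$. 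Here $P=\{x\}$, $g_x=1$, and your $\Omega=\{1,a^{\pm1},b^{\pm1}\}$, so the trivial group $K=\{1\}$ belongs to $\W(\Delta,\Omega)$ yet is the stabilizer of no action in $\O(\sigma,T,A)$; and indeed your grafting is inconsistent there, since $aK\neq bK$ while the fixed data forces $\sigma'(a)x=\sigma'(b)x$. Thus $\W(\Delta,\Omega)\not\subset G_x(\O(\sigma,T,A))$.

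A second, related shortfall: taking $P$ to be only the points reachable from $x$ by $T$-paths inside $A$ ignores points $a\in A\cap\sigma(G)x$ whose connection to $x$ leaves $A$; these must also receive coset names (their one-step images can collide with those of points of $P$), and unspecified ``filler orbits'' will not reproduce the required values $\sigma(t)a$ at such points, nor at points of $A$ off the orbit of $x$. The paper resolves both issues at once: it sets $A_Y=A\cap Y$, where $Y$ is the full orbit of $x$, chooses a finite symmetric $\Omega$ with $\bigcup_{s\in S}\tau(s)A_Y\subset\tau(\Omega)x$ --- so every relevant point \emph{and each of its one-step images} has an $\Omega$-name --- and uses the product set $\Omega S\Omega$ as the test set, which contains all pairwise quotients, including your missing two-step elements; and instead of abstract fillers it forms $G/K\sqcup X$ with the original action kept on the $X$-part, glued by a bijection equal to the identity on $\bigcup_{s\in S}\tau(s)(A)\setminus Y$, which makes agreement at points of $A$ outside the orbit automatic. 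If you replace $P$ by $A\cap\sigma(G)x$, drop the trace condition on the representatives (any $g_p$ with $\sigma(g_p)x=p$ will do), add to $\Omega$ names for the one-step images together with all pairwise quotients, and take the filler to be a disjoint copy of $\sigma$ itself, your argument closes and essentially becomes the paper's proof.
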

\begin{proof}
It is clear that this map is surjective. Let $\phi\in\Hom(G,X!)$ and let $\Omega\subset G$ be finite. If $\sigma\in \mathcal{O}(\phi,\Omega,\{x\})$ then \[\forall g\in\Omega: g\in G_x(\sigma) \Leftrightarrow x = \sigma(g)x = \phi(g)x \Leftrightarrow g\in G_x(\phi)\] i.e $G_x(\sigma)\in\W(G_x(\phi),\Omega)$. This proves that the map $G_x$ is continuous. 

To prove that this map is open let $\tau \in \mathcal{O} = \mathcal{O}(\phi,S,A) \subset \Hom(G,X!)$ be a basic open neighborhood in $\Hom(G,X!)$ and a point therein. By extending $S$ we may assume that this set contains the identity and is symmetric, i.e. that $S = S^{-1}$. We have to exhibit an open neighborhood $\mathcal{W} \subset \Sub(G)$ such that $G_x(\tau) \in \mathcal{W} \subset G_x(\mathcal{O}(\phi,S,A))$. 

Let $Y = \tau(G)x \subset X$ be the orbit of $x$ under $\tau$ and $L =  G_x(\tau)$ be the stabilizer. We can identify $Y = G/L$ under the orbit map $gL \mapsto \tau(g)x$.  Set $A_Y := A \cap Y$.   Let $\Omega \subset G$ be a finite symmetric set such that
\[ \bigcup_{s \in S} \tau(s)\overline{A_Y} \subset \tau(\Omega)x = \{\tau(\omega)x \mid \omega \in \Omega\}.\]
We claim that the basic open set $\mathcal{W} = \mathcal{W}(L,\Omega S \Omega)$ satisfies all our requirements. Where $\Omega S \Omega = \{\omega_1 s \omega_2 \ | \ \omega_i \in \Omega, s \in S \}$ is the set product. Fixing a group $K \in \mathcal{W}$, we will complete our proof by finding $\eta \in \mathcal{O}$ such that $K = G_x(\eta)$. 

Consider the finite subset $\Omega K = \{\omega K \in G/K \ | \ \omega \in \Omega\}$ and define a partial map:
\begin{eqnarray*}
f: \Omega K & \rightarrow &  Y \\
\omega K  & \mapsto &  \tau(\omega)x, \qquad \forall \omega \in \Omega.
\end{eqnarray*}
This map is well-defined and injective on its domain, since by the definition of the open set $\mathcal{W}$ we have $\omega_1 K = \omega_2 K \iff \omega_1 L_1 = \omega_2 L_1 \iff  \omega_1 L = \omega_2 L \iff \tau(\omega_1)x = \tau(\omega_2)x$ for every $\omega_1,\omega_2 \in \Omega$. For a similar reason this map partially respects the $G$ action on both sides in the sense that 
$$s f(\omega K) = f(s \omega K) \quad \forall \omega \in \Omega, s \in S.$$ 
Define a bijection 
$\tilde{f}: G/K \sqcup X \rightarrow X$ satisfying the following conditions:
\begin{itemize}
\item $\tilde{f}$ extends $f$, namely $\tilde{f}(\omega K) = f (\omega K), \quad \forall \omega \in \Omega$.
\item $\tilde{f}$ is the identity on $\cup_{s \in S} \tau(s)(A) \setminus Y$. 
\end{itemize}
Now let us define an action $\sigma_1$ of $G$ on $G/K \sqcup X$ where $G$ acts on $G/K$ by the quasi-regular action on on $X$ by $\tau$ and let $\sigma = \in \Hom(G,X!)$ be defined by $\sigma(g)x = \tilde{f} \sigma_1(g) \tilde{f}^{-1} (x)$. 
It is easy to verify that $K = G_x(\sigma)$. Also for every $s\in S$ and $y \in A$ we have $\sigma(s)y = \tau(s)y$. Indeed if $y \in A_Y$ then by our choice of $\Omega$, we have $y = \tau{\omega} x $ for some $\omega \in \Omega$ and hence:
\begin{equation*}
\sigma(s) y =  \left\{ \begin{array}{ll}
\widetilde{f} \sigma_1(s) \widetilde{f}^{-1} y = f s f^{-1} \tau(\omega) x = f s \omega K = \tau(s \omega) x = \tau(s) y, & y \in A_Y \\
\widetilde{f} \sigma_1(s) \widetilde{f}^{-1} y = \widetilde{f} \tau(s) \widetilde{f}^{-1} y = \tau(s)y & y \not \in A_Y
\end{array} \right.
\end{equation*}
This concludes the proof that $G_x$ is open. 
\end{proof}

\begin{corollary} \label{cor:dense_dense}
For a subset $\mathcal{D} \subset \Sub(G)$ denote\[\widetilde{\mathcal{D}} := \{\phi \in \Hom(G,X!) \mid G_x(\phi) \in \mathcal{D} \ \forall x \} = \bigcap_{x \in X} (G_x)^{-1}(\mathcal{D}).\]
Then
\begin{enumerate}
\item If $\mathcal{D}$ is conjugation-invariant and dense in $\Sub(G)$ then $\widetilde{\mathcal{D}}$ is dense in $\Hom(G,X!)$.
\item If $\mathcal{D}$ is $G_{\delta}$ in $\Sub(G)$ then $\widetilde{\mathcal{D}}$ is $G_{\delta}$ in $\Hom(G,X!)$.
\end{enumerate}
In particular, $\widetilde{\mathcal{D}}$ is generic whenever $\mathcal{D}$ is. 
\end{corollary}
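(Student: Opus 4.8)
For item (2), the soft statement, I would argue as follows. Since each stabilizer map $G_x$ is continuous by Lemma~\ref{lem1}, if $\mathcal{D}=\bigcap_n U_n$ with the $U_n\subset\Sub(G)$ open, then $(G_x)^{-1}(\mathcal{D})=\bigcap_n (G_x)^{-1}(U_n)$ is $G_\delta$; and as $X$ is countable, $\widetilde{\mathcal{D}}=\bigcap_{x\in X}(G_x)^{-1}(\mathcal{D})$ is a countable intersection of $G_\delta$ sets, hence $G_\delta$. This is a one-line verification once Lemma~\ref{lem1} is available.

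Item (1) is the substantive part, and the first thing to flag is that the naive argument is insufficient. Because $G_x$ is open and surjective (Lemma~\ref{lem1}), for any nonempty open $\mathcal{O}$ the image $G_x(\mathcal{O})$ is open and nonempty, hence meets the dense set $\mathcal{D}$, so each $(G_x)^{-1}(\mathcal{D})$ is dense; but $\widetilde{\mathcal{D}}$ is the intersection of these over all $x$, and since $\mathcal{D}$ is only assumed dense (not $G_\delta$), Baire's theorem does not apply. I would therefore exhibit an honest point of $\widetilde{\mathcal{D}}$ inside an arbitrary basic open set $\mathcal{O}(\phi,S,A)$, where as in Lemma~\ref{lem1} we may take $S=S^{-1}\ni e$. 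The key structural remark is that $\widetilde{\mathcal{D}}$ consists exactly of those $\psi$ all of whose transitive components are of the form $G/K$ with $K\in\mathcal{D}$: every point of an orbit $G/K$ has stabilizer conjugate to $K$, so by conjugation-invariance of $\mathcal{D}$ it suffices to control a single stabilizer per orbit. This is precisely where conjugation-invariance enters, collapsing the infinitely many constraints $G_x(\psi)\in\mathcal{D}$ down to one per orbit.

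The construction then proceeds as follows. Since $A$ is finite, only finitely many $\phi$-orbits $Y_1,\dots,Y_m$ meet $A$; I pick $x_i\in A\cap Y_i$ and set $L_i=G_{x_i}(\phi)$. The gluing inside the proof of Lemma~\ref{lem1} produces, for each $i$, a basic neighborhood $\W(L_i,\Omega_i)$ of $L_i$ such that any $K_i\in\W(L_i,\Omega_i)$ is realized as $G_{x_i}(\psi)$ for some $\psi\in\mathcal{O}$ obtained by replacing only the orbit $Y_i$. Using density I choose $K_i\in\mathcal{D}\cap\W(L_i,\Omega_i)$ and apply this to $Y_1,\dots,Y_m$ one at a time; as the $Y_i$ are pairwise disjoint, each step leaves the already-adjusted orbits untouched, yielding $\psi_0\in\mathcal{O}$ in which every orbit meeting $A$ is a copy of $G/K_i$ with $K_i\in\mathcal{D}$. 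The orbits of $\psi_0$ missing $A$ carry no constraint from $\mathcal{O}$, so I discard them and refill the complementary (countable) part of $X$ in bulk by $\bigsqcup_{n\in\N}G/K_0$ for a fixed $K_0\in\mathcal{D}$ (which exists since $\mathcal{D}$ is dense, hence nonempty), choosing the identification with $X$ so as to remain in $\mathcal{O}(\phi,S,A)$. By the structural remark every stabilizer of the resulting $\psi$ lies in $\mathcal{D}$, so $\psi\in\mathcal{O}\cap\widetilde{\mathcal{D}}$ and $\widetilde{\mathcal{D}}$ is dense. The main obstacle is precisely the bookkeeping of the finitely many orbits through $A$, which is exactly the gluing already performed in Lemma~\ref{lem1}; everything else is a matter of assembling transitive pieces $G/K$ with $K\in\mathcal{D}$.

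Finally, for the ``in particular'' clause: if $\mathcal{D}$ is generic it contains a dense $G_\delta$ set $E$, and I would pass to $E^{\ast}:=\bigcap_{g\in G}gEg^{-1}$. Since conjugation is a self-homeomorphism of $\Sub(G)$ and $G$ is countable, $E^{\ast}$ is again a dense $G_\delta$ set, it is conjugation-invariant, and $E^{\ast}\subset\mathcal{D}$. Items (1) and (2) then show that $\widetilde{E^{\ast}}$ is both dense and $G_\delta$, hence comeager, and $\widetilde{E^{\ast}}\subset\widetilde{\mathcal{D}}$; therefore $\widetilde{\mathcal{D}}$ is generic.
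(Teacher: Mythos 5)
Your proof is correct and, for items (1) and (2), essentially the paper's own argument: part (2) is the same continuity-plus-countability observation, and for part (1) the paper likewise fixes a basic open set $\O(\rho,S,A)$, uses the openness of the stabilizer map (Lemma \ref{lem1}) to replace, one at a time, the finitely many orbits meeting $A$ by orbits whose stabilizers lie in $\mathcal{D}$ --- conjugation-invariance propagating the condition along each orbit, exactly as in your ``structural remark'' --- and then extends to the rest of $X$ by orbits with stabilizers in $\mathcal{D}$. The one place you genuinely go beyond the paper is the ``in particular'' clause: the paper asserts it without proof, and a literal combination of (1) and (2) requires a conjugation-invariant dense $G_\delta$ set inside $\mathcal{D}$, which a generic $\mathcal{D}$ need not visibly contain; your set $E^{*}=\bigcap_{g\in G} gEg^{-1}$ supplies exactly that (each $gEg^{-1}$ is dense $G_\delta$ since conjugation is a homeomorphism of the compact metric, hence Baire, space $\Sub(G)$), and as a bonus it shows the genericity conclusion needs no conjugation-invariance hypothesis on $\mathcal{D}$ at all --- in the paper's applications $\mathcal{D}$ is always conjugation-invariant, so the issue is invisible there. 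One small point to make explicit in a final write-up: the union $Y$ of the orbits through $A$ may be cofinite in $X$ or even all of $X$, so ``refilling the complement'' by $\bigsqcup_{n\in\N} G/K_0$ should be done, as your phrasing already hints, by identifying the abstract $G$-set $Y\sqcup\bigsqcup_{n\in\N} G/K_0$ with $X$ via a bijection fixing the finite window $A\cup\bigcup_{s\in S}\phi(s)A$, rather than by constructing an action on the set $X\setminus Y$ itself; the paper's final sentence has the same harmless imprecision.
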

\begin{proof}
Suppose $\mathcal{D}$ is conjugation-invariant and dense in $\Sub(G)$, let $\rho\in\Hom(G,X!)$ and let $S\subset G,\ A\subset X$ be finite. Let $x\in A$. By Lemma \ref{lem1}, the set $G_x^{-1}(\mathcal{D})$ is dense in $\Hom(G,X!)$ and so there exists $\phi_1\in\Hom(G,X!)$ such that $\phi_1\in\O(\rho,S,A)$ and $G_x(\phi_1)\in\mathcal{D}$. Denote $Y_1 = \phi_1(G)x$ and note that since $\mathcal{D}$ is conjugation-invariant we have $G_y(\phi_1)\in\mathcal{D}$ for all $y\in Y_1$. Now, if $z\in A\minus Y_1$, we can apply the same argument and get a permutation representation $\phi_2$ of $G$ on $X\minus Y_1$ such that $\phi_2(s)$ agrees with $\phi_1(s)$ on $A\cap (X\minus Y_1)$ for all $s\in S$ and such that all the stabilizers of points belonging to $Y_2 = \phi_2(G)z$ are in $\mathcal{D}$. We get that the action $\phi\in\Hom(G,X!)$ defined by
\[ \forall g\in G, x\in X: \phi(g)x =
\begin{cases}
\phi_1(g)x,& x\in Y_1\\
\phi_2(g)x,& x\in X\minus Y_1
\end{cases}\]
belongs to $\O(\rho,S,A)$ and every stabilizer of a point belonging to the $\phi$-invariant set $Y_1\cup Y_2$ is in $\mathcal{D}$. By repeating the process described above, we get after finitely many steps an action $\psi\in \O(\rho,S,A)$ and a $\psi$-invariant set $Y\subset X$ such that $A\subset Y$ and such that $G_y(\psi)\in\mathcal{D}$ for every $y\in Y$. Finally, extend the action $G\acts{\psi} Y$ to an action $\widetilde{\psi}\in\Hom(G,X!)$ in such a way that all stabilizers of points in $X\minus Y$ belong to $\mathcal{D}$. Thus, $\widetilde{\psi}\in\widetilde{\mathcal{D}}$ and since $A\subset Y$ we have $\widetilde{\psi}(s)\rest{A} = \psi(s)\rest{A} = \rho(s)\rest{A}$ for all $s\in S$, i.e. $\widetilde{\psi}\in\O(\rho,S,A)$. This proves part 1.

Now, assume $\mathcal{D}$ is $G_\delta$ so we can write $\mathcal{D} = \bigcap_n \mathcal{D}_n$, where $\mathcal{D}_n$ is open for every $n\in\N$. Then, for every $x\in X$ we have $G_x^{-1}(\mathcal{D}) = \bigcap_n G_x^{-1}(\mathcal{D}_n)$. Since $\mathcal{D}_n$ is open we get from Lemma \ref{lem1} that $G_x^{-1}(\mathcal{D}_n)$ is open for every $x\in X$ and $n\in\N$ and so $G_x^{-1}(\mathcal{D})$ is $G_\delta$. Since $X$ is countable this means that $\widetilde{\mathcal{D}} =  \bigcap_{x \in X} (G_x)^{-1}(\mathcal{D})$ is also $G_\delta$ and part 2 is proven.
\end{proof}

\begin{lemma}\label{lem2}
Let $U \subset \Sub(G)$ be open and non-empty. Then for a generic permutation representation $\phi \in \Hom(G,X!)$ we have 
\[ |\{x \in X\mid G_x(\phi) \in U \}| = \infty.\]
\end{lemma}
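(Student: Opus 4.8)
The plan is to exhibit the set in question as a dense $G_\delta$ and apply Baire's theorem. Fix an enumeration $X = \{x_1, x_2, \dots\}$ and put $F_n = \{x_1,\dots,x_n\}$. For a finite set $F\subset X$ write
\[ V_F = \bigcup_{x \in X \minus F} (G_x)^{-1}(U) = \{\phi \in \Hom(G,X!) \mid \exists\, x \notin F:\ G_x(\phi)\in U\}. \]
A subset of $X$ is infinite exactly when it is contained in no $F_n$, so the set $\{\phi \mid |\{x\mid G_x(\phi)\in U\}|=\infty\}$ is precisely $\bigcap_n V_{F_n}$. It therefore suffices to prove that each $V_{F_n}$ is open and dense.

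Openness is immediate from Lemma \ref{lem1}: each $G_x$ is continuous and $U$ is open, so $(G_x)^{-1}(U)$ is open, and $V_{F_n}$ is a union of such sets.

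For density, fix a basic open set $\O(\rho,S,A)$; enlarging $S$ we may assume $S=S^{-1}\ni 1$. Choose any $H\in U$ and set $A^{+} = \bigcup_{s\in S}\rho(s)A$, a finite subset of $X$ containing $A$ (as $1\in S$). Pick a target point $x_0 \in X\minus(A^{+}\cup F_n)$, which exists since $X$ is infinite. I will produce $\psi\in\O(\rho,S,A)$ with $G_{x_0}(\psi)=H$; since $x_0\notin F_n$ and $H\in U$, this places $\psi\in V_{F_n}$. Following the device from the proof of Lemma \ref{lem1}, let $G$ act on the disjoint union $W=(G/H)\sqcup X$ by left translation on $G/H$ and by $\rho$ on $X$, and choose a bijection $\tilde f:W\to X$ that is the identity on $A^{+}$ and carries the coset $H\in G/H$ to $x_0$. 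Such a bijection exists because $W$ and $X$ are both countably infinite and $\tilde f$ is constrained only on the finite set $A^{+}$ together with the single point $H\mapsto x_0$; these are compatible since $x_0\notin A^{+}$ and $H$ lies in the other summand. Transporting the action, set $\psi(g)=\tilde f\circ(g\cdot)\circ\tilde f^{-1}$. For $a\in A$ and $s\in S$ we have $\tilde f^{-1}(a)=a$, $s\cdot a=\rho(s)a\in A^{+}$, hence $\psi(s)a=\tilde f(\rho(s)a)=\rho(s)a$, so $\psi\in\O(\rho,S,A)$. Moreover $\tilde f^{-1}(x_0)=H$, whose stabilizer in $W$ is exactly $H$, whence $G_{x_0}(\psi)=H\in U$. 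Thus $V_{F_n}\cap\O(\rho,S,A)\neq\emptyset$ and $V_{F_n}$ is dense.

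Baire's theorem then gives that $\bigcap_n V_{F_n}$ is comeager, which is the assertion. The one delicate point is the density step: if $\rho$ is transitive its orbit may exhaust $X$, leaving no room to append a fresh copy of $G/H$. This is precisely the obstacle circumvented by passing to $W=(G/H)\sqcup X$ and re-identifying it with $X$ through $\tilde f$ — the same disjoint-union-and-rebijection trick that powers the openness argument of Lemma \ref{lem1}. Everything else is routine bookkeeping.
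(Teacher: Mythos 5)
Your proof is correct and takes essentially the same route as the paper's: both express the set as a countable intersection of sets that are open by continuity of the stabilizer maps (Lemma \ref{lem1}) and dense via the same trick of adjoining a quasiregular orbit $G/H$ with $H\in U$ through a bijection fixing a finite window around $A$. The only cosmetic difference is the decomposition --- you demand ``a point with stabilizer in $U$ outside $F_n$'' and pin one new orbit at a chosen $x_0\notin F_n\cup A^{+}$, whereas the paper demands ``more than $m$ such points'' and appends $m$ copies of $G/H$ at once --- but the mechanism is identical.
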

\begin{proof}
Let
\begin{align*}
&\Lambda_m  =  \left\{\phi \in \Hom(G,X!) \ | \ |\{x \in X\mid G_x(\phi) \in U \}| > m \right\}. \\
&\Lambda =  \bigcap_{m \in \N} \Lambda_m.
\end{align*}
We claim that $\Lambda$ is generic, and by Baire's theorem it would be enough to show that $\Lambda_m$ is open and dense for each $m \in \N$. The fact that $\Lambda_m$ is open follows directly from the continuity of the map $G_x$. To prove density we just add $m$ new orbits with a stabilizer from $U$ far away. This is made explicit in the following way. Given a basic open set $\mathcal{O}(\phi,S,A) \subset \Hom(G,X!)$ we want to find an element $\eta \in \mathcal{O}(\phi,S,A) \cap \Lambda_m$. Fix any subgroup $H \in U$ and consider the set
$Y = X \sqcup \left(G/H \right)^m$ endowed with the diagonal $G$ action $\eta' \in \Hom(G,Y!)$ given by $\eta'(g)(x,g_1H, \ldots, g_mH) = (\phi(g)x, gg_1H, \ldots, gg_mH)$. Now, let $\iota: X \rightarrow Y$ be the identity map from $X$ to the copy of $X$ contained in $Y$, $\iota|_{\overline{A}}: \overline{A} \rightarrow Y$ be its restriction to the finite set $\overline{A} =A \cup \left(\bigcup_{s \in S \cup S^{-1}} \phi(s)A\right)$ and let $I: X \rightarrow Y$ be an extension of $\iota|_{\overline{A}}$ to a bijection between $X$ and $Y$. One easily checks that \[\eta = I^{-1} \circ \eta' \circ I \in \mathcal{O}(\phi,S,A) \cap \Lambda_m\] is as required. 
\end{proof}

\begin{lemma}\label{lem2bis}
To any given $\rho \in \Hom(G,X!)$ there is an arbitrarily close action, with infinitely many fixed points.
\end{lemma}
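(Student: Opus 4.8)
The plan is to show that the set of permutation representations having infinitely many fixed points is dense in $\Hom(G,X!)$; the statement is then just a reformulation of this. Recall that $x \in X$ is a fixed point of $\sigma$ precisely when $G_x(\sigma) = G$, so what I want is an action, close to $\rho$, possessing infinitely many points whose stabilizer is all of $G$. This is the density half of the argument already used for Lemma \ref{lem2}, except that instead of grafting on $m$ copies of an orbit $G/H$ with $H$ in a prescribed open set $U$, I would graft on infinitely many \emph{trivial} orbits (singletons on which $G$ acts by the identity). Because we only need density and not genericity, there is no need for the fixed-point locus $\{G\} \subset \Sub(G)$ to be open, which is fortunate: by Proposition \ref{prop:is}(\ref{itm:Occ_fg}) the subgroup $G$ fails to be isolated in $\Sub(G)$ whenever it is not finitely generated.

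Concretely, I would fix a basic open neighborhood $\O(\rho,S,A)$ with $S \subset G$ and $A \subset X$ finite, and set $\overline{A} = A \cup \bigcup_{s \in S \cup S^{-1}} \rho(s)A$, a finite subset of $X$. Let $F$ be a countably infinite set disjoint from $X$, form $Y = X \sqcup F$, and define $\eta' \in \Hom(G,Y!)$ by letting $G$ act on the summand $X$ via $\rho$ and fixing every point of $F$. Then every point of $F$ is a fixed point of $\eta'$, so $\eta'$ has infinitely many fixed points. Since $\overline{A}$ is finite and both $X$ and $Y$ are countably infinite, I can choose a bijection $I \colon X \to Y$ whose restriction to $\overline{A}$ is the inclusion $\overline{A} \hookrightarrow X \subset Y$, and set $\sigma = I^{-1}\circ \eta' \circ I \in \Hom(G,X!)$.

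It then remains to verify the two required properties. First, $\sigma$ is permutation isomorphic to $\eta'$ via $I$, so its fixed-point set contains $I^{-1}(F)$ and is therefore infinite. Second, for every $x \in A$ and $s \in S$ both $x$ and $\rho(s)x$ lie in $\overline{A}$, so $I$ fixes them, giving
\[ \sigma(s)x = I^{-1}\eta'(s)I(x) = I^{-1}\eta'(s)x = I^{-1}(\rho(s)x) = \rho(s)x, \]
which shows $\sigma \in \O(\rho,S,A)$. As $\O(\rho,S,A)$ was an arbitrary basic neighborhood of $\rho$, this produces the desired arbitrarily close action with infinitely many fixed points. I do not expect any genuine obstacle here: the entire content is the bookkeeping ensuring that $I$ is the identity on the finite set $\overline{A}$ that controls the generators on $A$, exactly as in the density step of Lemma \ref{lem2}.
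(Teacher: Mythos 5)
Your proof is correct and follows essentially the same route as the paper: both add countably many fresh fixed points to form an action on $X \sqcup \N$ (your $X \sqcup F$) and then transport it back to $X$ via a bijection that is the identity on the finite set $A\cup\bigcup_{s\in S}\rho(s)A$ controlling the neighborhood $\O(\rho,S,A)$. Your verification that $\sigma \in \O(\rho,S,A)$ and has infinitely many fixed points is exactly the intended bookkeeping, so there is nothing to fix.
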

\begin{proof}
Given finite sets $S \subset G, A \subset X$ we seek an action $\rho' \in \mathcal{O}(\rho,S,A)$ with infinitely many fixed points. Consider the action $G \acts{(\rho, \operatorname{1})} X \sqcup \N$, obtained from $\rho$ by adding countably many fixed points. The desired action $\rho' = \phi^{-1} (\rho,\operatorname{1}) \phi$ is obtained by intertwining this action via any bijection $\phi: X \to X \sqcup \N$ with the property that $\phi$ is the identity when restricted to $A\cup \big(\bigcup_{s\in S} \rho(s)A\big)$.
\end{proof}
%
%The following lemma explicates the fact mentioned in the introduction that isolated subgroups should be thought of as  \emph{algorithmically detectable subgroups}. For $H \in \Is(G)$ there is a finite procedure that can verify, given $\phi \in \Hom(G,X!)$ whether $G_x(\phi) = H$.
%\begin{lemma} \label{lem:cc_stable}
%Assume that for $\phi \in \Hom(G,X!)$ the stabilizer $H := G_x(\phi) \in \Is(G)$. Then, there exists an open neighborhood $\phi \in \O(\phi,S,A) \subset \Hom(G,X!)$ such that $G_x(\psi) = H$ for every $\psi \in \O(\phi,S,A)$. 
%\end{lemma}
%\begin{proof}
%Since the map $\Hom(G,X!) \stackrel{G_x}{\longrightarrow} \Sub(G)$ is continuous and the point $\{H\}$ is open, we can find a basic open subset $\O(\phi,S,A) \subset G_x^{-1}(\{H\})$ and this will give the desired results. 
%\end{proof}

\section{The LERF property}\label{sec:LERF}
In this section we prove Theorem \ref{thm:LERF}. In fact, as promised in the introduction, we will prove the following slightly more general version of the theorem that for arbitrary countable groups (not necessarily finitely generated). 
\begin{theorem}\label{thm:LERF_countable}
Let $G$ be a finitely generated group, then the following conditions are equivalent:
\begin{enumerate}
\item $G$ is LERF.
\item The collection of finite index subgroups of $G$ is dense in $\Sub(G)$.
\item \label{itm:fin_org_dense} The collection of permutation representations all of whose orbits are finite is dense in $\Hom(G,X!)$. 
\item  \label{itm:fin_orb_fg} The collection of permutation representations all of whose orbits are finite is generic in $\Hom(G,X!)$.
\item \label{itm: typical_action} $G$ has a generic permutation representation, all of whose orbits are finite. 
\end{enumerate}
When $G$ is countable, but not necessarily finitely generated, then only the first three conditions are equivalent. 
\end{theorem}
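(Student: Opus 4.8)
The plan is to prove the chain $(1)\Leftrightarrow(2)\Leftrightarrow(3)$ for an arbitrary countable group, using only the Chabauty description of $\Sub(G)$ together with the stabilizer-map results of Lemma \ref{lem1} and Corollary \ref{cor:dense_dense}, and then—in the finitely generated case—to close the cycle $(2)\Rightarrow(4)\Rightarrow(5)\Rightarrow(3)$, where the single extra ingredient is that every finite index subgroup is isolated (Proposition \ref{prop:is}(\ref{itm:fi_Is})). Throughout I identify the finite-orbit representations with the set $\widetilde{\Sub^{\fii}(G)}$ of Corollary \ref{cor:dense_dense}, since $\phi$ has all orbits finite exactly when $G_x(\phi)\in\Sub^{\fii}(G)$ for every $x$.

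For $(1)\Rightarrow(2)$, given a basic open set $\W(H,\Omega)$ I would put $H_0=\la H\cap\Omega\ra\le H$, a finitely generated subgroup, and for each of the finitely many $g\in\Omega\minus H$ invoke LERF (so $H_0$ is profinitely closed and $g\notin H_0$) to choose a finite index $K_g\ge H_0$ with $g\notin K_g$; then $K=\bigcap_g K_g$ is finite index and lies in $\W(H,\Omega)$. For $(2)\Rightarrow(1)$ I would use the envelope basis: for finitely generated $H$ and $g\notin H$ the set $\Env(H)\cap\bigl(\Sub(G)\minus\Env(\la g\ra)\bigr)=\{K\mid H\le K,\ g\notin K\}$ is open by Proposition \ref{prop:is}(\ref{itm:env}) and contains $H$, so density of $\Sub^{\fii}(G)$ produces a finite index $K\ge H$ with $g\notin K$; intersecting over $g\notin H$ exhibits $H$ as an intersection of finite index subgroups. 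Neither direction uses finite generation of $G$. For $(2)\Rightarrow(3)$ I would apply Corollary \ref{cor:dense_dense}(1) to the conjugation-invariant dense set $\Sub^{\fii}(G)$. For $(3)\Rightarrow(2)$, given $\W(H,\Omega)$ I would fix a representation $\phi_0$ and a point $x_0$ with $G_{x_0}(\phi_0)=H$ (realize the quasiregular action on $G/H$ inside $X$ and pad with fixed points); then $G_{x_0}^{-1}(\W(H,\Omega))$ is an open neighbourhood of $\phi_0$ by continuity of $G_{x_0}$, and by $(3)$ it contains a finite-orbit representation $\phi$, whose stabilizer $G_{x_0}(\phi)$ is a finite index subgroup lying in $\W(H,\Omega)$.

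In the finitely generated case the easy implications come first. Since every finite index subgroup is isolated, $\Sub^{\fii}(G)$ is a union of open singletons, hence open; under $(2)$ it is a dense open, therefore generic, conjugation-invariant subset, so Corollary \ref{cor:dense_dense} makes $\widetilde{\Sub^{\fii}(G)}$ generic, giving $(2)\Rightarrow(4)$. The implication $(4)\Rightarrow(3)$ is trivial (comeager sets are dense), and $(5)\Rightarrow(4)$ is trivial as well; moreover $(5)\Rightarrow(3)$ is immediate, since a comeager orbit all of whose members have finite orbits forces the finite-orbit representations to be dense. Thus the only substantive step left is $(4)\Rightarrow(5)$.

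The main obstacle is $(4)\Rightarrow(5)$: exhibiting a \emph{single} comeager orbit. I would propose the explicit representative
\[ \tau^* \;=\; \bigsqcup_{[H]}\ \infty\cdot\left(G/H\right), \]
the sum ranging over the countably many conjugacy classes of finite index subgroups, each appearing with infinite multiplicity. To show its orbit is comeager I would write its isomorphism class as an intersection of comeager sets: the set $\{$all orbits finite$\}$, comeager by $(4)$, intersected over all classes $[H]$ with $M_{[H]}=\{\phi\mid \phi \text{ has infinitely many components} \cong G/H\}$. Each $M_{[H]}$ is comeager by Lemma \ref{lem2} applied to $U=\{H\}$ (open precisely because $H$ is isolated): generically infinitely many points have stabilizer exactly $H$, and since $[N_G(H):H]<\infty$ each $G/H$-component carries only finitely many such points, forcing infinitely many components of that type. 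Any $\phi$ in all these comeager sets has every finite-index type with multiplicity $\infty$ and no infinite orbit, hence is isomorphic to $\tau^*$; as there are countably many $[H]$, the intersection—equal to the $X!$-orbit of $\tau^*$—is comeager. I expect the delicate points to be verifying that ``multiplicity $\infty$ for every type'' truly determines the isomorphism class, and the bookkeeping relating points of a fixed stabilizer to whole transitive components; both rely on finite generation of $G$ through the isolation of finite index subgroups, which is exactly why $(4)$ and $(5)$ drop out of the equivalence in the merely countable case.
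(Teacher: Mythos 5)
Your proposal is correct and takes essentially the same approach as the paper: you construct the same generic representative $\tau^*=\bigsqcup\infty\cdot\left(G/H\right)$ over the (conjugacy classes of) finite index subgroups, and extract genericity from Corollary \ref{cor:dense_dense} applied to the open dense set $\Sub^{\fii}(G)$ together with Lemma \ref{lem2} applied to the isolated singletons $\{H\}$, your rearrangement of the implication cycle (closing via $(4)\Rightarrow(5)$ where the paper proves $(2)\Rightarrow(5)$, and proving $(1)\Leftrightarrow(2)$ and $(3)\Rightarrow(2)$ by direct separation arguments where the paper uses ascending unions of finitely generated subgroups and $(3)\Rightarrow(1)$) being only cosmetic. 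As a minor bonus you make explicit a bookkeeping point the paper glosses over, namely that infinitely many points with stabilizer exactly $H$ force infinitely many transitive components isomorphic to $G/H$, since each such component contains only $[N_G(H):H]<\infty$ of them.
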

It is easy to verify that this theorem implies Theorem \ref{thm:LERF} when $G$ is finitely generated. 
The apparent complications when $G$ fails to be finitely generated, and lack thereof in the proof of Theorem \ref{thm:main}, 
emphasize one of our main points: {\it{Isolated subgroups are more natural than finite index subgroups in this setting.}} We mention again that the equivalence of the first, third and fourth conditions is already present 
in Rosendal's works \cite{Rosendal1} and \cite{Rosendal2} (see the discussion in the introduction).

\begin{proof}[Proof of Theorem \ref{thm:LERF_countable}]
In a countable group $G$ every subgroup is an ascending union of finitely generated subgroups. Hence $\Sub^{\fg}$ is dense in $\Sub(G)$. The LERF property implies that $\Sub^{\fg} \subset \overline{\Sub^{\fii}}$ as every finitely generated subgroup is a descending intersection of finite index subgroups. This shows {\textit{(1) $\implies$ (2)}}. Now  {\textit{(2) $\implies$ (3)}} follows directly from Corollary (\ref{cor:dense_dense}). To prove {\textit{(3) $\implies$ (1)}}, assume we are given a finitely generated infinite index subgroup $L = \la S\ra$  and $g \in G \minus L$. Let $\phi \in \Hom(G,X!)$ be any permutation representation which is isomorphic to the quasiregular action $G \acts{} (G/L)$ in such a way that $x \in X$ is identified with the trivial coset $eL$. If $\psi \in \O(\phi,S \cup \{g\},\{x\})$ is an action with finite orbits then $[L:G_x(\psi)] < \infty$, $L < G_x(\psi)$ but $g \not \in G_x(\psi)$ proving the LERF property. 

Assume now that $G$ is finitely generated. The implications \emph{(5)$\implies$(4)$\implies$(3)} are obvious, so it is enough to prove the implication \emph{(2)$\implies$(5)}. We start by describing the generic permutation representation $\tau^* \in \Hom(G,X!)$. Let $\Sub^{\fg}(G) = \{ H_1,H_2,\dots\}$ be an enumeration of the finite index subgroups of $G$, let $\phi_n$ be the quasi-regular representation of $G$ on $G/H_n$ and define:
\[ \tau^* =  \bigsqcup \infty \cdot \phi_n.\]
Namely, take countably many copies of each representation in the list, and let $G$ act naturally on the disjoint union of the corresponding sets. It follows from Corollary \ref{cor:dense_dense}, applied to the open dense set $\mathcal{D} = \Sub^{\fii}(G) \subset \Sub(G)$, that the collection of permutation representations all of whose orbits are finite is generic. It follows from Lemma \ref{lem2} applied to the open set $\{H_i\} \subset \Sub(G)$ that the collection of permutation representations in which $\rho_i$ appears countably many times as a transitive component is also generic. By Baire's category theorem a generic permutation representation has only finite 
orbits and each $\rho_i$ appears in it countably many times. But such a permutation representation must be permutation isomorphic to $\tau^{*}$. 
\end{proof}
\begin{remark}\label{remark1}
As the examples below demonstrate if $G$ is an infinitely generated LERF group it is no longer true that a generic permutation representation has only finite orbits. It is still true, however, that the restriction of such a generic action to every finitely generated subgroup $H < G$ has only finite orbits. It is even true that the restriction of a generic permutation to $H$ admits a well defined isomorphism type (up to isomorphism of permutation representations of $H$). The details of the proof are quite similar to our proof above and we leave them to the reader. \end{remark}
In order to demonstrate the use of Theorem \ref{thm:LERF} above and give some basic examples we analyze the situation in free groups. Providing a short proof to Hall's theorem that free groups are LERF. 
\begin{proposition}
Let $F_n, \ 1 \le n \le \infty$ be a free group. 
\begin{enumerate}
\item \label{itm:Fn_LERF} $F_n$ is LERF. 
\item \label{itm:transitive} A generic permutation representation $\phi \in \Hom(F_{\infty},X!)$ is transitive. 
\item \label{itm:non_fg} If $\Gamma$ is a countable LERF group that is not finitely generated then $\Sub(\Gamma)$ is perfect.\end{enumerate}
\end{proposition}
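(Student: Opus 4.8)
For part (\ref{itm:Fn_LERF}), I would verify condition (3) of Theorem \ref{thm:LERF_countable} --- that the permutation representations all of whose orbits are finite are dense in $\Hom(F_n,X!)$ --- and then invoke the implication $(3)\implies(1)$, which holds for every countable group. Since $F_n$ is free, a point of $\Hom(F_n,X!)$ is nothing but an arbitrary assignment of permutations to a free generating set $S$, with no constraints to respect. The elementary fact that drives the argument is that the finitely supported permutations are dense in $X!$: a basic neighbourhood $\U(\alpha,A)$ only prescribes the partial injection $\alpha\rest{A}$, which extends to a permutation of the finite set $A\cup\alpha(A)$ and then by the identity elsewhere. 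Given a basic open set $\O(\phi,T,A)$, only the finitely many generators occurring in the words of $T$ are constrained; replacing each of their images by a finitely supported permutation agreeing with $\phi$ on a finite set containing the traces of the points of $A$ under the words of $T$, and sending every remaining generator to the identity, produces $\psi\in\O(\phi,T,A)$. Such a $\psi$ sends all but finitely many generators to the identity and the rest to finitely supported permutations, so every $\psi$-orbit lies in one fixed finite set, hence is finite. This establishes density, and therefore LERF.

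For part (\ref{itm:transitive}), I would show directly that the transitive representations form a comeager set. Transitivity is the condition $\phi\in\bigcap_{(x,y)\in X\times X}T_{x,y}$, where $T_{x,y}=\{\phi\mid \exists w\in F_\infty,\ \phi(w)x=y\}$. Each $T_{x,y}$ is open, being a union over words $w$ of the open conditions $\{\phi\mid \phi(w)x=y\}$, each of which depends only on finitely many coordinates along the trace of $x$ under $w$. The hypothesis $n=\infty$ enters precisely in the density step: given $\O(\phi,T,A)$, the finite set $T\subset F_\infty$ involves only finitely many generators, so some generator $s_k$ occurs in no word of $T$; leaving $\phi$ unchanged on all other generators and redefining $\phi(s_k)$ to be any permutation sending $x$ to $y$ yields $\psi\in\O(\phi,T,A)\cap T_{x,y}$, since no constraint of the open set involves $s_k$. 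As $X\times X$ is countable, $\bigcap_{x,y}T_{x,y}$ is a countable intersection of dense open sets, hence generic, and it is exactly the set of transitive representations.

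For part (\ref{itm:non_fg}) I would argue by contradiction that $\Sub(\Gamma)$ has no isolated points. Suppose $H\in\Is(\Gamma)$, so that $\{H\}$ is open. Since $\Gamma$ is countable and LERF, Theorem \ref{thm:LERF_countable} gives that the finite index subgroups are dense in $\Sub(\Gamma)$; a dense set must meet the nonempty open set $\{H\}$, forcing $H$ to have finite index in $\Gamma$. But a finite index subgroup of a non-finitely-generated group is itself non-finitely-generated, since by Schreier's formula a finitely generated finite index subgroup would force $\Gamma$ to be finitely generated. This contradicts Proposition \ref{prop:is}(\ref{itm:Occ_fg}), according to which every isolated subgroup is finitely generated. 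Hence $\Is(\Gamma)=\emptyset$ and $\Sub(\Gamma)$ is perfect.

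I expect the only genuinely delicate point to be the two density arguments that exploit freeness, in parts (\ref{itm:Fn_LERF}) and (\ref{itm:transitive}): one must take care to rewrite the finitely many constrained words in terms of their finitely many generators, enlarge $A$ to absorb the relevant traces, and then edit only the unconstrained (respectively, the fresh) generators, so that the modified representation provably stays inside the prescribed basic open set. Everything else is a direct application of Theorem \ref{thm:LERF_countable} and Proposition \ref{prop:is}.
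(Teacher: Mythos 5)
Your proposal is correct and follows essentially the same route as the paper: part (\ref{itm:Fn_LERF}) via density of finitely supported permutation data together with Theorem \ref{thm:LERF_countable} ($(3)\implies(1)$), part (\ref{itm:transitive}) via Baire's theorem with density supplied by a generator not occurring in the constrained words, and part (\ref{itm:non_fg}) by playing density of finite-index subgroups against the fact (Proposition \ref{prop:is}(\ref{itm:Occ_fg})) that isolated subgroups are finitely generated, noting that finite-index subgroups of a non-finitely-generated group are non-finitely-generated. One remark: for $n=\infty$ your extra step of sending all but finitely many generators to the identity is genuinely needed and slightly sharper than the paper's phrasing, since the set $\Hom(F_\infty,X!^{(f)})=\bigl(X!^{(f)}\bigr)^\infty$ used there does not literally consist of finite-orbit representations (infinitely many finitely supported permutations can generate a group acting with infinite orbits), whereas your representations, with finitely many finitely supported images and all other generators acting trivially, visibly have all orbits finite.
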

\begin{proof}
First notice that $\Hom(F_n,X!) = X!^n$. Let $X!^{(f)} < X!$ be the dense subgroup of finitely supported permutations. Clearly $\Hom(F_n,X!^{(f)}) = \left(X!^{(f)} \right)^n \subset X!^n = \Hom(F_n,X!)$ is a dense set of permutation representations all of whose orbits are finite. This proves (\ref{itm:Fn_LERF}) by establishing Theorem \ref{thm:LERF}(\ref{itm:fin_org_dense}). 

To prove (\ref{itm:transitive}) it is enough, by Baire's theorem, to show that the set $\Theta(x,y) = \{ \phi \in \Hom(F_{\infty},X!) \ | \ y \in \phi(F_{\infty})x \}$ is open and dense. Openness is obvious. For the density, fix a free generating set $F_{\infty} = \langle x_1,x_2,\ldots \rangle$. Given a basic open set $\mathcal{O}(\phi, S,A) \subset \Hom(F_{\infty},X!)$ the finite set $S \subset F_{\infty}$ contains words that involve only finitely many of the generators, say $S \subset \langle x_1, x_2, \ldots, x_r \rangle$. We can find $\sigma \in \mathcal{O}(\phi,S,A) \cap \Theta(x,y)$ by setting $\sigma(x_i) = \phi(x_i)$ for every $1 \le i \le r$ and then defining $\sigma(x_{r+1})$ in such a way that $\sigma(x_{r+1})x = y$. 

Finally if $\Gamma$ fails to be finitely generated then so do her finite index subgroups. So by Proposition \ref{prop:is}(\ref{itm:Occ_fg}) none of these are isolated. If $\Gamma$ is also LERF then the finite index subgroups are dense and in particular there can be no isolated subgroups at all. This proves (\ref{itm:non_fg}).
\end{proof}

\section{Solitary groups}\label{sec:genact}

This section is dedicated to the proof of Theorem \ref{thm:main}. 
\begin{proof}
Assume that $\Is(G)$ is dense in $\Sub(G)$. Let $\Is(G)= \{\Delta_1, \Delta_2, \Delta_3, \ldots, \Sigma_1, \Sigma_2, \ldots \}$ be representatives for the conjugacy classes of these isolated subgroups of $G$. Where we made a distinction between the groups $\Delta_i$ that are of finite index in their normalizer and the groups $\Sigma_i$ that are not. We denote by $G \stackrel{\delta_i}{\curvearrowright} G/\Delta_i, G \stackrel{\sigma_i}{\curvearrowright} G/\Sigma_i$ the corresponding quasiregular actions. It is important to note that in $G/\Delta_i$ there are finitely many points whose stabilizer is $\Delta_i$ under the $\delta_i$ action. In $G/\Sigma_i$ there are infinitely many similar points. With this terminology in place we can describe the generic permutation representation. It will have countably many orbits isomorphic to each $\delta_i$ and one orbit isomorphic to each $\sigma_i$:
$$\tau^{*} =  \left(\bigsqcup_i \aleph_0 \delta_i \right) \sqcup \left(\bigsqcup_j \sigma_j \right).$$

Applying Corollary \ref{cor:dense_dense} to the open dense subset $\Is(G) \subset \Sub(G)$ we conclude that a generic permutation representation has all of its stabilizers in $\Is(G)$. In other words a generic  $\left( \sqcup_i d_i \delta_i \right) \sqcup \left(\sqcup_i s_i \sigma_i \right)$ for some $d_i,s_i \in \N \cup \{\infty\}$. By Lemma \ref{lem2}, applied to the open set $\{\Delta_i\}$ we know that a generic representation has infinitely many points whose stabilizer is $\Delta_i$, which immediately implies that $d_i = \infty, \ \forall i$. Note that an identical argument tells us that a generic representation has countably many points whose stabilizer is $\Sigma_i$, but even one orbit isomorphic to $\sigma_i$ is enough to ensure that so this does not add any information about the coefficients $s_i$. 

All that is left to prove is that a generic representation has only one orbit isomorphic to $\sigma_i$ for every $i$. In order to simplify the notation we will hence fix the index $i$ and denote $\sigma = \sigma_i$, $\Sigma = \Sigma_i$. Let us denote by $[\Sigma] = \{g \Sigma g^{-1} \ | \ g \in G \} \subset \Sub(G)$ the conjugacy class of $\Sigma$. The bad event is the existence of two different orbits with stabilizers 
in $[\Sigma]$:
\begin{eqnarray*}
\Theta & = &  \bigcup_{x,y \in X} \Theta_{x,y} \\
\Theta_{x,y} & = &  \{\phi \in \Hom(G,X!) \ | \ G_x(\phi) = G_y(\phi) = \Sigma; {\text{ but }} \phi(G)x \ne \phi(G)y \} \\
 \end{eqnarray*}
By continuity of the stabilizer map (Lemma \ref{lem1}) the sets $\G_x^{-1}(\{\Sigma\}), G_y^{-1}(\{\Sigma\})$ and hence also $\Theta_{x,y}$ are closed. So by  Baire's theorem it suffices to prove that $\Theta_{x,y}$ is nowhere dense. Assume to the contrary that $\mathcal{O} = \mathcal{O}(\phi,S,A) \subset \Theta_{x,y}$ for some basic open set. Replacing if necessary $\mathcal{O}$ by a smaller basic open set, we may assume that $S = S^{-1}$ and that $O \subset G_x^{-1}(\{\Sigma\}) \cap G_y^{-1}(\{\Sigma\})$. Let $A_x = \phi(G)x \cap A$ and $\Omega_x = \bigcup_{s \in S} \phi(s)A_x$. We define $A_y,\Omega_y$ similarly. 

Consider the quasiregular action $\sigma: G \curvearrowright G/\Sigma$, if $g \in N_G(\Sigma)$ then there is a unique $G$ invariant isomorphism 
\begin{eqnarray*}
\eta_{g\Sigma}: \phi(G)x  & \to & G/\Sigma \\
\phi(h) x & \mapsto & hg\Sigma.  
\end{eqnarray*}  
Since $[N_G(\Sigma):\Sigma] = \infty$ by assumption there are infinitely many possible choices of  points $g \Sigma$ that would work for $x$ and similarly for $y$. Let $g_x \Sigma, g_y \Sigma$ be two such choices satisfying the additional property 
$$\eta_{g_x}(\Omega_x) \cap \eta_{g_y}(\Omega_y) = \emptyset.$$

Let $\alpha: \phi(G)x \sqcup \phi(G)y \to G/\Sigma$ be any bijection such that $\alpha(z) = \eta_{g_x}(z), \forall z \in \Omega_x$ and $\alpha(z) = \eta_{g_y}(z), \forall z \in \Omega_y$. We define a new action $\psi \in \mathcal{O}$ by the following formula:
$$
\psi(g)(x) = \left \{ 
\begin{array}{ll}
\alpha^{-1}(g \alpha(x)) & {\text{if }} x \in \phi(G)x \sqcup \phi(G) y \\
\phi(g)x & {\text{otherwise}} 
\end{array} \right. 
$$
It is easy to verify that $\psi \in \mathcal{O}$, that $G_x(\psi),G_y(\psi) = \Sigma$ and that $x,y$ are in the same $\psi(G)$-orbit. Which completes the proof of the first implication. 

Assume now that there exists a generic permutation representation $\tau^* \in \Hom(G,X!)$, by assumption its isomorphism class $\Phi = \{a \tau^* a^{-1} \ | \ a \in X!\}$ is co-meager in $\Hom(G,X!)$. The collection of subgroups appearing as point stabilizers of $\tau^*$ are given by 
$O = \{G_x(\tau^*) \ | \ x \in X\} = G_{x_0}(\Phi) \subset \Sub(G),$ 
where $x_0$ is an arbitrary basepoint. Since, by Lemma \ref{lem1}, the map $G_{x_0}$ is surjective and continuous $O$ is dense in $\Sub(G)$. In particular $O \supset \Is(G)$. 

We will show that $O \subset \Is(G)$ thus showing that the latter is dense and completing the proof.  Let $\Sigma \in O$  and $[\Sigma] = \{g \Sigma g^{-1} \ | \ g \in G\}$ its conjugacy class. By Proposition \ref{prop:is}(1) it is enough to show that $[\Sigma]$ is open. If $[\Sigma]$ fails to be open it must have an empty interior, because $G$ acts transitively on $[\Sigma]$. Since $[\Sigma]$ is countable it follows from Baire's theorem that $\Sub(G) \setminus [\Sigma]$ is a dense $G_{\delta}$ set. By Corollary \ref{cor:dense_dense} 
$$\widetilde{\Sub(G) \setminus [\Sigma]} = \{\sigma \in \Hom(G,X!) \ | \ G_x(\sigma) \not \in [\Sigma], \ \forall x \in X\}$$ is also a dense $G_\delta$ set. But this contradicts the fact that $\Phi$ is dense $G_{\delta}$ as the intersection of these two sets is empty. 
\end{proof}

\noindent We conclude this section by proving Theorem \ref{thm:solitary_prop}.
\begin{proof} (of Theorem \ref{thm:solitary_prop})
In view of the fact that in a finitely generated group $G$ finite index subgroups are isolated (\ref{itm:l->s}) follows directly from the comparison of Theorems \ref{thm:main} and \ref{thm:LERF}. Density of isolated points (statement \ref{itm:countable}) is a general fact about countable Baire spaces. Indeed the set $\Sub(G) \setminus \Is(G)$ is nowhere dense since it is a countable union of closed, nowhere dense points. Consider a short exact sequence as in statement (\ref{itm:fg_kernel}). Since $N$ is finitely generated $\Env(N)$ is clopen. It is easy to verify that the correspondence principle, between subgroups of $G$ and subgroups of $H$ containing $N$, gives rise to a homeomorphism 
\begin{eqnarray*}
\Env(N) & \arrow & \Sub(G) \\
\Theta & \mapsto & \phi(\Theta)
\end{eqnarray*}
Claim (\ref{itm:fg_kernel}) follows immediately. 

It is well known that the free product of two LERF groups is LERF \cite{rom:lerf_free_prod,burns:lerf_free_prod}. If one of the groups, say $H$ is trivial then $G*\langle e \rangle \cong G$ and the situation is clear. Thus to establish (\ref{itm:free_prod}) we have to show that if neither group is trivial and $G$ fails to be LERF then $G*H$ cannot be solitary. Let $\Sigma \in \Sub^{\fg}(G) \setminus \overline{\Sub^{\fii}}$ be some finitely generated subgroup that cannot be approximated by finite index subgroups. Let $\eta: G*H \arrow G$ be the map that is the identity on $G$ and trivial on $H$. We will show that $\Xi := \eta^{-1}(\Sigma) = \Sigma \ker(\eta) \in \Sub(G*H)$ is a subgroup that cannot be approximated by isolated subgroups. Indeed let $\Omega \subset G$ be some finite set such that the neighborhood $\W_{\Sub(G)}(\Sigma, \Omega)$ does not contain any finite index subgroup. We can use the same $\Omega$ to define an open neighborhood in $G*H$ and it is clear that $[G: \Delta \cap G] = \infty$ for every $\Delta \in \Sub_{\Sub(G*H)}(\Xi, \Omega)$. Thus our theorem is proved in view of the following lemma, which seems very useful in its own right. \end{proof}
\begin{lemma}
Let $G,H$ be two countable groups with $G$ infinite and $H$ non-trivial. If $\Delta \in \Is(G*H)$ then $[G: G \cap \Delta] < \infty$.
\end{lemma}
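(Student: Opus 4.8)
The plan is to prove the contrapositive: assuming $[G : G\cap\Delta] = \infty$, I will show $\Delta\notin\Is(G*H)$ by producing, for a prescribed finite window, a \emph{strictly larger} subgroup that still agrees with $\Delta$ on that window. First, since $\Delta$ is isolated it is finitely generated by Proposition \ref{prop:is}, so I may fix a finite symmetric $\Omega\ni e$ with $\W(\Delta,\Omega)=\{\Delta\}$ and enlarge it to contain a finite generating set of $\Delta$. If a subgroup $K$ satisfies $K\cap\Omega=\Delta\cap\Omega$, then $K$ contains the chosen generators, so $\Delta\le K$; hence isolation is equivalent to the statement that the only subgroup $K\ge\Delta$ with $K\cap E=\emptyset$ is $\Delta$ itself, where $E:=\Omega\minus\Delta$ is a finite subset of $(G*H)\minus\Delta$. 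Thus it suffices to exhibit some $K\supsetneq\Delta$ with $K\cap E=\emptyset$.

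For the construction, set $D=G\cap\Delta$; the hypothesis says $D\backslash G$ is infinite, equivalently that the quasiregular action $G*H\acts{}\Delta\backslash(G*H)$ carries the base coset to infinitely many distinct cosets under $G$. Fix $h\in H\minus\{e\}$ (available since $H$ is nontrivial) and, for $g\in G$, consider the reduced element $ghg^{-1}$ together with the enlargement $K_g:=\la\Delta,ghg^{-1}\ra\supseteq\Delta$. Because $(ghg^{-1})^k=gh^kg^{-1}$, every element of $K_g$ admits a normal form $\delta_0\,gh^{m_1}g^{-1}\,\delta_1\cdots gh^{m_r}g^{-1}\,\delta_r$ with $\delta_i\in\Delta$, and when $r\ge 1$ and no short cancellation occurs at the junctions $g^{-1}\delta_i g$, the syllable length grows with $r$. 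The idea is to choose $g$ in a coset ``far away,'' so that $ghg^{-1}\notin\Delta$ (making $K_g\supsetneq\Delta$) and so that no element of the finite set $E$ is created.

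The key estimate, and the heart of the argument, is that each fixed $e\in E$ lies in $K_g$ for only finitely many cosets $gD$: in a reduced expression for the bounded-length word $e$ of the form above, the syllables of $g$ must be read off from the fixed syllables of $e$, so $g$ is pinned down to finitely many values. Likewise $ghg^{-1}\in\Delta$ for only finitely many cosets, corresponding to the finitely many factors of $\Delta$ that are conjugates of subgroups of $H$. Since $E$ is finite while $D\backslash G$ is infinite, infinitely many cosets survive, and any surviving $g$ gives $K_g\supsetneq\Delta$ with $K_g\cap E=\emptyset$, hence $K_g\cap\Omega=\Delta\cap\Omega$, contradicting isolation. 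I expect the main obstacle to be exactly this normal-form bookkeeping: controlling the cancellation at the junctions $g^{-1}\delta g$ in the free product and confirming that each $e\in E$ blocks only boundedly many cosets. This is the step that genuinely invokes the free-product structure (via the Kurosh decomposition of $\Delta$, or equivalently the action of $\Delta$ on the Bass--Serre tree of $G*H$) and where the hypotheses that $G$ is infinite and $H$ is nontrivial are used.
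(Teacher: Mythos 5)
Your reduction is fine up to the crucial point: since isolated subgroups are finitely generated, you may take $\Omega$ to contain a generating set of $\Delta$, and then non-isolation is indeed equivalent to producing some $K \supsetneq \Delta$ with $K \cap E = \emptyset$. (Minor slip: $K_g$ depends on the left coset $Dg$, since $K_{dg} = \langle \Delta, d(ghg^{-1})d^{-1}\rangle = K_g$ for $d \in D$; it does \emph{not} depend only on $gD$.) The genuine gap is the ``key estimate,'' which you flag yourself, and it is not mere bookkeeping --- the justification you sketch is actually incorrect as stated. In a factorization $e = \delta_0\, g h^{m} g^{-1}\, \delta_1$ (let alone the longer words with $r \ge 2$), the elements $\delta_0, \delta_1$ range over the \emph{infinite} group $\Delta$, so an astronomically long $g$ can be swallowed by cancellation against $\delta_0$ and produce a short $e$; the syllables of $g$ cannot be ``read off from the fixed syllables of $e$.'' What you really need is a double-coset finiteness statement --- that $\Delta e \Delta$ meets $\{g h^m g^{-1} : g \in G\}$ in only finitely many classes of $D\backslash G$, together with your companion claim that $ghg^{-1} \in \Delta$ for only finitely many $Dg$ --- and neither is free. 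For instance, with $\Delta = \ker(G*H \to G \times H)$ one has $[g,h] \in \Delta$ for \emph{every} $g$, hence $h \in K_g$ for all $g$ while $D = 1$: a single $e$ blocks all cosets. This $\Delta$ is not finitely generated, so it does not contradict the lemma, but it shows your estimate depends essentially on finite generation of $\Delta$, through Kurosh/Bass--Serre arguments the sketch does not supply; even the passage from ``finitely many Kurosh factors up to $\Delta$-conjugacy'' to ``finitely many $D$-cosets'' is delicate, because mixed elements $g'\eta g^{-1} \in \Delta$ ($\eta \in H$) can relate distinct $D$-cosets within one conjugacy class. At this step the proposal is assuming essentially the full content of the lemma.

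It is instructive to compare with the paper's proof, which is soft and avoids normal forms entirely. It realizes the quasiregular action of $G*H$ on $(G*H)/\Delta$ as $\phi * \psi \in \Hom(G*H,X!)$ with $x \mapsto e\Delta$, notes that $[G : G\cap\Delta] = \infty$ makes the orbit $Y = \phi(G)x$ infinite, and then perturbs only the $H$-part of the action near a point $\xi_n \in Y$ chosen outside a large finite window, splicing in \emph{two} different $H$-actions there: the trivial one and the regular one. Both perturbed actions converge to $\phi*\psi$, so by continuity of the stabilizer map both stabilizer sequences converge to $\Delta$; yet at each stage $n$ they differ by the element $g_n^{-1} h g_n$ (where $\phi(g_n)x = \xi_n$), which fixes $x$ under the trivial splice but not under the regular one. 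Hence at least one sequence is not eventually constant, and $\Delta$ is not isolated. Your element $ghg^{-1}$ with $g$ ``far away'' is morally the same germ (the trivial splice adjoins $g_n^{-1}hg_n$ to the stabilizer), but the paper's two-perturbation trick is exactly what lets it \emph{never certify} that any particular enlarged stabilizer avoids $E$: one only needs that the two candidates differ, so they cannot both equal $\Delta$. If you want to rescue your approach, the natural move is precisely this pivot from subgroups to actions; note also that the paper's argument does not even need $\Delta$ finitely generated.
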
 
\begin{proof}
It will be more convenient to argue at the level of actions and Schreier graphs. Note that $\Hom(G*H,X!) \cong \Hom(G,X!) \times \Hom(H,X!)$. We will denote this isomorphism by $\phi*\psi \mapsto (\phi,\psi)$, namely $\phi*\psi$ is the unique action of $G*H$ whose restriction to $G$ is $\phi$ and to $H$ is $\psi$.

Let $\Delta \in \Sub(G*H)$ and assume that $[G:G\cap\Delta] = \infty$. Let $\phi*\psi \in \Hom(G*H,X!)$ be any action which is isomorphic to the quasiregular action $G*H \acts{} (G*H)/\Delta$ with $x \in X$ identified with the trivial coset $e \Delta$. Note that, while the action of $G*H$ is transitive, $\phi,\psi$ themselves need not be transitive. Still, by our assumption we know that the orbit $Y:=\phi(G)x \subset X$ is infinite. The argument now is simple enough : we obtain approximating actions of the form $\phi*\psi_n \stackrel{n}{\arrow} \phi*\psi$ by carrying out small perturbations on the action of $H$. Since the orbit $Y$ is infinite we can do arbitrarily small such perturbations on the action while still affecting the stabilizer of the point $x$. We elaborate below but this is basically a complete proof. 

 Let $x \in A_0 \subset A_1 \subset A_2 \ldots$ be finite sets ascending to the whole of $X$, $T_n \subset H$ finite sets ascending to a generating set of $H$. We can assume that $1 \in T_n = T_n^{-1}$. Of course if $H$ is finitely generated we can just take $T_n = T, \ \forall n \in \N$ to be some fixed symmetric set of generators. Set $B_n = \cup_{t \in T_n} \psi(t)(A_n)$ and $\xi_n \in Y \setminus B_n$. Since $X$ is infinite we can find bijections $f_n: X \arrow X \times \{0,1\}$, with the additional properties that
$f_n(x) = (x,0), \forall x \in A_n$ and $f_n(\xi_n) = (\xi_n,1)$.
Let $\eta \in \hom(H,X!)$ be any fixed action of $H$ on $X$. Using all this data we construct a sequence of actions $\overline{\psi^{\eta}} \in \Hom(H,(X \times \{0,1\})!)$ as follows:
$$\overline{\psi^{\eta}} (h)(x,i) = 
\left\{ \begin{array}{ll} 
            (\psi(h)x,0), & i = 0 \\
            (\eta(h)x,1), & i = 1 
         \end{array} \right.  $$
and let $\psi^{\eta}_n(h) = (f_n)^{-1} \circ \overline{\psi^{\eta}} \circ f_n$. It is clear from the definitions that $\psi^{\eta}_n \in 
\O(\psi,T_n,A_n)$ and in particular $\psi^{\eta}_n \arrow \psi$ as $n \arrow \infty$. Consequently of course $\phi*\Psi^{\eta}_n \arrow \phi * \psi$ in $\Hom(G*H, X!)$. 

Let $1,\lambda \in \Hom(H,X!)$ be the trivial and the regular left action of $H$ on $X$. The latter is defined via an arbitrarily chosen identification of $X$ with $H$ which will not play a role in the discussion. If $H$ is finite we replace the regular left action by countably many copies of the same action, just to make sure $\lambda$ is an action on an infinite set.  We use these to obtain two convergent sequences of actions and hence two convergent sequences of subgroups
\begin{align*}
\phi*\psi^{1}_n & \arrow  \phi*\psi & \qquad  \phi* \psi^{\lambda}_n & \arrow  \phi*\psi \\
(G*H)_x(\phi*\psi^{1}_n) & \arrow   \Delta & \qquad (G*H)_x(\phi* \psi^{\lambda}_n) & \arrow  \Delta
\end{align*} 
These sequences are different because if $g_n \in G_n $ is any element such that $\phi(g_n)x = \xi_n$ then for any $1 \ne h \in H$ we have  
$$g_n^{-1} h g_n \in (G*H)_x(\phi*\psi^{1}_n) \setminus (G*H)_x(\phi* \psi^{\lambda}_n).$$
Thus at least one of these sequences is not eventually constant, proving that the limit point $\Delta$ is not isolated. 

 \end{proof}

 %
%As we have seen, finitely generated LERF groups form a family of examples of solitary groups. We now give two more:
%
%\begin{example}
%\emph{Tarski monster groups}. In these groups every proper, non trivial subgroup is cyclic of order $p$, for some fixed prime $p$. This implies that all such subgroups are finitely generated and maximal and thus, by part 4 of Proposition \ref{prop:is}, isolated. Since a Tarski monster is finitely generated, the group itself is also isolated.  In other words, if $G$ is a tarski monster then $\Is(G) = \Sub(G)\minus \{1\}$ so $\Is(G)$ is dense in $\Sub(G)$. 
%\end{example}
%
%\begin{example}
%\emph{the group $G = \Z[\frac{1}{p}]$ for any prime $p$}. This group is solitary because every finitely generated subgroup of $G$ is isolated. Indeed, let $H\in\Sub(G)$ be finitely generated. It is a known fact that all the finitely generated subgroups of $G$ are cyclic, so $H$ is of the form $H = \left\la \dfrac{m}{p^n}\right\ra$ where $m,n\in\Z$ and $p$ does not divide $m$. If $K\in\Sub(G)$ is finitely generated and contains $H$ then the generator of $K$ must divide $\dfrac{m}{p^n}$. This implies that if $K\neq H$ then $K$ must contain at least one of the following finite list of elements: $\Omega = \left\{\dfrac{r}{p^n}\Big| r|m, r\neq m\right\}\cup\left\{\dfrac{m}{p^{n+1}}\right\}$. Thus, we have that the only finitely generated subgroup containing $H$ in the open set $\W(H,\Omega)$ is $H$ itself. Thus $\W(H,\Omega)\cap\Env(H)$ is an open neighborhood of $H$ containing only $H$.  
%\end{example}

  \section{Sketch of another proof of Theorem \ref{thm:main}}
We briefly sketch another proof of Theorem \ref{thm:main}, which is more along the lines of the arguments in \cite{Kechris_Rosendal}. First, we note that for any countable group $G$, the action of $X!$ on $Hom(G,X!)$ is topologically transitive, that is, given any two nonempty open subsets $U,V$ of $\Hom(G,X!)$, there always exists $\alpha \in X!$ such that $\alpha \cdot U \cap V \ne \emptyset$ (equivalently, there exists elements in $\Hom(G,X!)$ which have a dense conjugacy class). This is true simply because any two actions $\pi_1, \pi_2$ of $G$ on an infinite countable set embed in a third one $\pi_3$ (for instance, obtained by considering a disjoint union of two infinite countable sets, with $G$ acting as $\pi_1$ on the first copy, and $\pi_2$ on the second copy). Then the closure of the conjugacy class of $\pi_3$ contains both $\pi_1$ and $\pi_2$, proving the desired result.

This brings us to the setting of the following lemma; the equivalence between \eqref{l:comeager} and \eqref{l:Christian} below is the criterion we will be using, and is due to C. Rosendal. The equivalence of these conditions with \eqref{l:Yair} appears to be new, and seems potentially useful so we are including it here even though it will not be needed.

\begin{lemma}\label{l:WAP} 
Assume that $P$ is a Polish group acting continuously and topologically transitively on a Polish space $Z$. Then the following conditions are equivalent:
\begin{enumerate}
\item \label{l:comeager} There exists a comeager orbit. 
\item \label{l:Yair} For any open identity neighborhood $1 \in V \subset P$ the collection of points 
$$\{z \in Z \ | \ V(z) {\text{ is somewhere dense}}\} = \{z \in Z \ | \ \mathrm{Int}(\overline{V(z)}) \ne \emptyset \}$$
is dense in $Z$. 
\item \label{l:Christian} For any open identity neighborhood $1 \in V \subset P$ and any nonempty open subset $U$ of $Z$, there exists a nonempty open $U' \subseteq U$ such that, for any nonempty open $W_1,W_2 \subseteq U'$, one has $V W_1 \cap W_2 \ne \emptyset$. 
\end{enumerate}
\end{lemma}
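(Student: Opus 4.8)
The plan is to prove the cycle of implications \eqref{l:comeager} $\Rightarrow$ \eqref{l:Yair} $\Rightarrow$ \eqref{l:Christian} $\Rightarrow$ \eqref{l:comeager}. Throughout I fix a complete metric on the Polish group $P$ and a countable basis for $Z$, and I record two standard consequences of topological transitivity. First, a \emph{topological $0$--$1$ law}: since orbits are analytic (continuous images of $P$) they have the Baire property, and any $P$-invariant set with the Baire property is, under topological transitivity, either meager or comeager; hence a comeager orbit exists if and only if some orbit is non-meager, and it is then unique. Second, via the Kuratowski--Ulam theorem applied to the (analytic, hence Baire-measurable) orbit equivalence relation $E = \{(z,z') \mid z' \in Pz\} \subseteq Z \times Z$, a comeager orbit exists if and only if $E$ is comeager in $Z \times Z$. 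It is convenient to reformulate \eqref{l:Christian} as: for all $V$ and $U$ there is a nonempty open $U' \subseteq U$ such that $V^{-1}W$ is dense in $U'$ for every nonempty open $W \subseteq U'$ (this is literally the same statement, fixing $W_2=W$ and letting $W_1$ range).

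For \eqref{l:comeager} $\Rightarrow$ \eqref{l:Yair}, let $O$ be the comeager orbit and fix an open $1 \in V \subseteq P$. Since $P$ is second countable it is Lindel\"of, so the cover $\{gV \mid g \in P\}$ has a countable subcover indexed by some $D \subseteq P$; then for any $z \in O$ we have $O = Pz = \bigcup_{g \in D} g(Vz)$. As $O$ is comeager it is non-meager, so by Baire's theorem some $g(Vz)$ is non-meager, whence $Vz$ is non-meager. Being analytic it has the Baire property, so a non-meager $Vz$ cannot be nowhere dense, i.e. $Vz$ is somewhere dense. Thus the set in \eqref{l:Yair} contains the dense set $O$ and is therefore dense.

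For \eqref{l:Yair} $\Rightarrow$ \eqref{l:Christian}, fix $V$ and $U$ and choose a symmetric open $V_0 \ni 1$ with $V_0 V_0 \subseteq V$. By \eqref{l:Yair} there is a point $z_0$ with $V_0 z_0$ dense in some nonempty open $O = \mathrm{Int}(\overline{V_0 z_0})$, and I would use topological transitivity to transport a piece of this density region into $U$, producing a nonempty open $U' \subseteq U$ in which a set of the form $V_0 w$ is dense. The point of the halving $V_0 V_0 \subseteq V$ is that once $V_0 w$ is dense in $U'$, any two nonempty open $W_1, W_2 \subseteq U'$ contain points $v_1 w, v_2 w$ with $v_i \in V_0$, and then $v_2 v_1^{-1} \in V_0 V_0 \subseteq V$ carries the first into the second, giving $V W_1 \cap W_2 \neq \emptyset$. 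The delicate point here is the localization: \eqref{l:Yair} only provides density \emph{somewhere}, with no control on location, so moving that region into the prescribed $U$ while keeping the relevant neighborhood comparable to $V$ is exactly where topological transitivity must be used with care.

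The implication \eqref{l:Christian} $\Rightarrow$ \eqref{l:comeager} is the heart of the matter and the step I expect to be the main obstacle; it is Rosendal's criterion and proceeds by a fusion (back-and-forth) argument. The goal, by the preliminaries, is to show $E$ is comeager in $Z \times Z$. I would build along a Baire scheme two shrinking families of nonempty open sets $U_s, U'_s \subseteq Z$ (with diameters tending to $0$, so that generic branches pin down a comeager set of pairs $(z,z')$) together with partial transporters $g_n = h_n \cdots h_1$, where each $h_k$ is drawn from a prescribed small neighborhood $V_k \ni 1$. At each stage the reformulated \eqref{l:Christian}, applied at scale $V_k$, is what permits nudging $g_n U_n$ toward $U'_n$ by a \emph{small} group element while shrinking the open sets; choosing the $V_k$ to decrease fast enough (symmetrically, controlling both $h_k$ and $h_k^{-1}$) forces $(g_n)$ to be Cauchy, so by completeness of $P$ it converges to some $g$ with $g z = z'$ for every caught pair. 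The two competing demands — that the transporters converge (requiring ever-smaller steps and careful neighborhood bookkeeping) and that the set of caught pairs remain comeager — are what make this construction delicate; condition \eqref{l:Christian} is precisely the local richness that lets both be met simultaneously. Kuratowski--Ulam then yields the comeager orbit, closing the cycle.
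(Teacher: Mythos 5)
Your implication (1)$\Rightarrow$(2) is correct and is essentially the paper's argument (your appeal to the Baire property there is superfluous: nowhere dense sets are meager, so a non-meager set is automatically somewhere dense). The first genuine gap is in (2)$\Rightarrow$(3), at the very step you flag as ``delicate'': the tool you name for the localization --- topological transitivity --- is the wrong one and would not work. If $V_0z_0$ is dense in some open $O$ and you transport by a group element $g$ so that $gO$ meets $U$, then $gV_0z_0$ is dense in $gO$, but $gV_0 = (gV_0g^{-1})g$, and the conjugate $gV_0g^{-1}$ bears no relation to the prescribed neighborhood $V$; the smallness of the acting neighborhood is destroyed by an uncontrolled translation. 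No transitivity is needed at all: condition (2) asserts \emph{density} of the set of good points, so you may pick the witness $x$ inside $U$ itself. Take symmetric $V_0$ with $V_0V_0 \subseteq V'$ and $V'$ symmetric with $V'V' \subseteq V$, pick $x \in U$ with $O := \mathrm{Int}\bigl(\overline{V_0(x)}\bigr) \ne \emptyset$, and choose $v \in V_0$ with $vx \in O$. Then $x \in v^{-1}O$ and $\overline{V'(x)} \supseteq \overline{v^{-1}V_0(x)} \supseteq v^{-1}O$, so $U' := v^{-1}O \cap U$ is a nonempty open subset of $U$ in which $V'(x)$ is dense; your two-translate computation ($v_2v_1^{-1} \in V'V' \subseteq V$) then finishes exactly as in the paper.

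The second and more serious gap is (3)$\Rightarrow$(1), which you yourself call the main obstacle: what you offer is a two-sentence plan, not a proof, and as described it does not deliver the conclusion. A Baire/Cantor scheme of shrinking open sets with convergent transporters pins down one pair $(z,z')$ per branch --- with substantial extra work perhaps a perfect or non-meager set of pairs --- whereas your strategy requires the orbit equivalence relation to be \emph{comeager} in $Z \times Z$; nothing in the sketch produces comeagerness, and the interaction between the left multiplications $h_{n+1}$ and the sets $U'$ furnished by (3) (which you would need at the translated sets $g_nU_n$, over whose shape you have no control) is exactly the hard bookkeeping, not a routine detail. Ironically, you already hold the key to the paper's much shorter route, since you state the topological $0$--$1$ law in your preliminaries. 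The paper proves the contrapositive $\neg(1)\Rightarrow\neg(3)$: if no orbit is comeager, then by the $0$--$1$ law all orbits are meager; covering an orbit $P(z)$ by closed sets $F_n$ with empty interior and applying Baire's theorem in $P$ to the closed sets $\{g \in P \mid gz \in F_n\}$ yields, for each $z$, a basic neighborhood $V$ of $1$ with $V(z)$ nowhere dense; one of the countably many sets $\{z \mid V(z) \text{ nowhere dense}\}$ is then non-meager, hence comeager in some nonempty open $U$; but (3) applied to this $V$ and $U$ gives $U' \subseteq U$ on which $\{z \in U' \mid V(z) \cap W \ne \emptyset\}$ is dense open for every nonempty open $W \subseteq U'$, so that $V(z)$ is dense in $U'$ for generic $z \in U'$ --- a contradiction. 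If you insist on the direct route, you must either run a genuine Banach--Mazur strategy argument or at least observe that, by your own $0$--$1$ law, a single non-meager orbit suffices; as written, the heart of the lemma is missing.
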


\begin{proof}
Fix an open identity neighborhood $V$ and a sequence of group elements such that $\cup_n \alpha_n V = P$. If there exists a comeager orbit then by Baire's category theorem for every $z$ in this orbit there exists an $n$ such that $\alpha_nV(z)$ is somewhere dense. Translating by $\alpha_n$ we deduce that $V(z)$ itself is somewhere dense. This proves that \eqref{l:comeager} implies \eqref{l:Yair}.

Assume that \eqref{l:Yair} holds; fix an open identity neighborhood $1 \in V \subset P$ and a nonempty open subset $U$ of $Z$. Using our assumption, and continuity of the action, we may find a symmetric open identity neighborhood $V'$ such that $V'V' \subseteq V$, an open $U' \subseteq U$ and $x \in U$ such that the closure of $V'(x)$ contains $U'$. Then, for any $W_1,W_2$ nonempty open and contained in $U'$, we have some $v_1,v_2 \in V'$ such that $v_1 x \in W_1$, $v_2 x \in W_2$. Thus $v_2 v_1^{-1}W_1 \cap W_2 \ne \emptyset$ and \eqref{l:Christian} holds.

Finally, assume that \eqref{l:comeager} is false; since there exist dense orbits, any orbit must be meager or comeager, by the $0$--$1$ topological law \cite{Rao:top_0_1}. So in this case all orbits are meager. Given $z \in Z$, we then have an family of closed subsets $F_n$ with empty interior such that $P(z) \subseteq \bigcup F_n$; some $\{g \in P \colon g z \in F_n\}$ must have nonempty interior, proving that there exists some open neighborhood $V$ of $1$ such that $V(z)$ is nowhere dense.
Thus $Z$ is the union of sets of the form $\{z \in Z \colon V(z) \text{ is nowhere dense}\}$, where $V$ ranges over a countable basis of neighborhoods of $1$; one of these sets must be nonmeagre, hence (since these sets are Borel) comeagre in some nonempty open $U$. Assume that \eqref{l:Christian} holds, and pick $U' \subseteq U$ witnessing it. The assumption of \eqref{l:Christian} amounts to saying that $\{z \in U' \colon V(z) \cap W \ne \emptyset\}$ is dense open in $U'$ for any nonempty open $W \subseteq U'$; this implies that $\{z \in U' \colon V(z) \text{ is dense in } U'\}$ is comeagre in $U'$. This is a contradiction with the fact that $V(z)$ must be nowhere dense for a generic element of $U$, hence also of $U'$.

\end{proof}

Now, we need to understand when the above criterion is satisfied, for $G$ a countable group, $P=X!$ and $Z= \Hom(G,X!)$. Given an open set $U=\O(\rho,S,A)$, let $i(\rho)$ denote the number of distinct
$G$-orbits of elements of $A$; we may pick $\rho$ such that $i(\rho)$ number is minimal among elements of $U$. Then, enlarging $S$ and shrinking $A$ as needed, we can reduce to the situation where
$$\forall \pi \in U \ \forall a \ne a' \in A \ \forall g \in G \  \pi(g)(a) \ne a'\ .  $$
 We are now in a situation where orbits of elements of $A$ cannot interfere with each other; this enables us to reduce to the case where $A=\{a\}$ is a singleton and we are working inside the Polish space $Z'$ of \emph{transitive} $G$-actions. Consider an open set $U=\O(\rho,S,a) \cap Z'$; let $G_a$ denote the stabilizer of $a$ for this action, and let $V$ be the group of permutations fixing a finite set $F$. Enlarging $S$ if needed, we assume that $F \subseteq Sa$. Then, it is readily checked that two elements $\rho_1,\rho_2$ of $U$ belong to the same $V$-orbit iff the stabilizers of $a$ for $\rho_1$ and $\rho_2$ are the same. 
Let 
$$S_1= \{g_2 g_1^{-1} \colon g_1,g_2 \in S \text{ and } g_1(a)=g_2(a) \} \  , \ S_2= \{g \in S \colon g(a) \neq a\} \ .$$
The discussion above shows that the criterion \eqref{l:Christian} of Lemma \ref{l:WAP} is satisfied iff there exists an open set $W=\O(\tilde \rho,\tilde S,a) \cap Z'$ contained in $U$ such that the stabilizer of $a$ is the same for any two elements of $W$; that is, if and only if there exist finite sets $\tilde S_1$, $\tilde S_2 \subseteq G$, with $\tilde S_1 \supseteq S_1$ and $\tilde S_2 \supseteq S_2$, and a subgroup $G'_a$ such that:
$$\forall H \in \Sub(G) \ \left( \forall g \in S_1 \ g \in H \text{ and } \forall g \in S_2 \ g \not \in H \right) \Leftrightarrow  (H=G'_a)\ .$$  

Thus $H$ is an isolated point in $\Sub(G)$; since $G_a$ was an arbitrary subgroup of $G$, and $S_1,S_2$ encode an arbitrary open neighborhood of $G_a$, we just established that there exists a generic action in $\Hom(G,X!)$ if, and only if, $G$ is solitary.

One can try to use the same approach as above to understand when there exists generic conjugacy classes in $\Hom(G,P)$ for other Polish groups $P$. But, as the structure of $P$ becomes more complicated, the above analysis is harder to carry out (in particular, the reduction to transitive actions no longer works). One case when one can do it is when $P=\text{Aut}(\mathcal R)$ is the automorphism group of the random graph. Then, reasoning in much the same way as above, one obtains the following criterion, which probably can be further simplified.

\begin{proposition}\label{p:random}
Given $G$ a countable group, there exists a generic element in $\Hom(G,\text{Aut}(\mathcal R))$ iff the following condition is satisfied:

For any finitely-generated subgroups $H_1,\ldots,H_n$ of $G$, and any finite $K_{i,j} \subseteq \G$ with $H_iH_j \cap K_{i,j} = \emptyset$ for all $i \le j$, there exists finitely generated subgroups $H'_1,\ldots,H'_n$ such that:
\begin{itemize}
\item For all $i$ $H'_i$ contains $H_i$.
\item For all $i,j$ the double coset space $H'_i \backslash G /H'_j$ is finite.
\item For all $i \le j$ one has $H'_iH'_j \cap K_{i,j}= \emptyset$.
\end{itemize}
\end{proposition}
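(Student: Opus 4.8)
The plan is to adapt the reduction carried out for $X!$ to the automorphism group $\mathrm{Aut}(\mathcal R)$, keeping Lemma~\ref{l:WAP} as the workhorse. First I would verify that the action of $\mathrm{Aut}(\mathcal R)$ on $\Hom(G,\mathrm{Aut}(\mathcal R))$ is topologically transitive: given two $G$-actions by automorphisms of $\mathcal R$, one can realize both simultaneously inside a single copy of $\mathcal R$ using the homogeneity and universality of the random graph (any two finite configurations amalgamate), so the closure of a suitable common extension contains both. Once topological transitivity is in hand, the criterion of Lemma~\ref{l:WAP}\eqref{l:Christian} governs the existence of a comeager orbit, and the whole proof reduces to unwinding what that criterion says combinatorially in this setting.

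Next I would fix a basic open set $U = \O(\tilde\rho, S, A) \cap \Hom(G,\mathrm{Aut}(\mathcal R))$ and a basic identity neighborhood $V = \{g \in \mathrm{Aut}(\mathcal R) : g \text{ fixes } F \text{ pointwise}\}$ for a finite $F \subset \mathcal R$, and translate the statement ``there exists $U' \subseteq U$ such that $VW_1 \cap W_2 \ne \emptyset$ for all open $W_1, W_2 \subseteq U'$'' into a statement about subgroups of $G$. The key difference from the $X!$ case is that we can no longer pass to transitive actions: an element of $\mathrm{Aut}(\mathcal R)$ must respect the edge relation, so the relevant data attached to a finite tuple of points is not a single stabilizer but a finite tuple of subgroups $H_1, \dots, H_n$ (the stabilizers of representatives of the finitely many $G$-orbits meeting $F$) together with the pattern of edges between orbits. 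The edge relation between the orbit of $p_i$ and the orbit of $p_j$ is encoded by which double cosets $H_i g H_j$ carry an edge; homogeneity of $\mathcal R$ forces this edge-pattern to be realizable, and the constraints ``$g p_i$ is never adjacent-or-equal to $p_j$ in a forbidden way'' are exactly the conditions $H_i H_j \cap K_{i,j} = \emptyset$.

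I would then show that the criterion \eqref{l:Christian} holds at $U$ precisely when the finite tuple of subgroups (with its edge-data) can be \emph{pinned down} by finitely many membership and non-membership tests. This is where the three bulleted conditions of the proposition appear. Containment $H'_i \supseteq H_i$ reflects enlarging the stabilizers to isolate them; finiteness of each double coset space $H'_i \backslash G / H'_j$ is the correct generalization of ``finite orbit/isolated point'' and is exactly what guarantees that the edge-relation between orbits is determined by a finite amount of data, so that the genericity of the edge-configuration (governed by the random graph's extension property) can be absorbed into the $V$-orbit; and the condition $H'_i H'_j \cap K_{i,j} = \emptyset$ says the enlargement does not violate the originally prescribed non-adjacencies. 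By the homogeneity of $\mathcal R$, once these finitary constraints are met any two elements of the resulting $W$ are conjugate by an element of $V$, which is precisely the content of \eqref{l:Christian}; conversely, if no such tuple exists, one produces a dense $G_\delta$ of points whose $V$-orbits are nowhere dense, defeating \eqref{l:comeager} as in the proof of Lemma~\ref{l:WAP}.

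The main obstacle I anticipate is the bookkeeping of the edge-data across orbits, i.e.\ showing that the double-coset-finiteness condition is both necessary and sufficient to make the back-and-forth argument for $\mathrm{Aut}(\mathcal R)$ go through. In the $X!$ case a single $V$-orbit inside a transitive piece is determined by the stabilizer alone; here one must check that, given finiteness of $H'_i \backslash G / H'_j$, the finitely many adjacency decisions between the (finitely many) double cosets can always be completed to a genuine embedding into $\mathcal R$ and that any two such completions are related by a finite graph automorphism extending to $V$. This uses the extension property of $\mathcal R$ in an essential and slightly delicate way, and keeping track of the symmetric constraints $K_{i,j}$ for all pairs $i \le j$ simultaneously—rather than one stabilizer at a time—is the part that requires genuine care rather than a routine transcription of the $X!$ argument.
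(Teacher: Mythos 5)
Your proposal takes essentially the same route as the paper, which proves Proposition~\ref{p:random} only by remarking that one reasons ``in much the same way as above,'' i.e.\ by establishing topological transitivity of the $\mathrm{Aut}(\mathcal R)$-action on $\Hom(G,\mathrm{Aut}(\mathcal R))$ and then unwinding criterion~\eqref{l:Christian} of Lemma~\ref{l:WAP}, with a finite tuple of orbit stabilizers together with double-coset--encoded edge data replacing the single-stabilizer analysis of the $X!$ case. Your sketch is in fact more explicit than the paper's one-line justification --- in particular in identifying why the reduction to transitive actions fails and how finiteness of $H'_i \backslash G / H'_j$ pins down the adjacency pattern via the extension property of $\mathcal R$ --- and it matches the intended argument.
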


The above property is hard to grasp; it does imply that $G$ is finitely generated, and that any finitely generated subgroup of $G$ is an intersection of finitely generated subgroups with finite bi-index. Must a group satisfying the previous conditions be LERF?

\section{A-separability}\label{sec:Asep}

\begin{definition}  \label{def:amenable_action}
An action $G \acts{} X$ of a discrete, countable group $G$ is called \emph{amenable} if it satisfies any one of the following equivalent conditions:
\begin{itemize}
\item For every $\epsilon > 0$ and $\Omega\subset G$ finite, $X$ admits an \emph{$(\epsilon,\Omega)$-F\o lner} subset, that is, a finite set $F\subset X$ such that $\dfrac{|gF\Delta F|}{|F|} < \epsilon$ for all $g\in \Omega$.
\item There exists a finitely additive $G$-invariant probability measure on $X$. 
\end{itemize}
When the action is transitive, of the form $G \acts{} G/K$, these conditions are further equivalent to the following:
\begin{itemize}
\item If $G$ acts continuously on a compact space and $K$ admits an invariant Borel measure, then so does $G$. 
\end{itemize}
\end{definition}

\noindent In the transitive case it is sometimes convenient to adopt group theoretic terminology as follows:
\begin{definition} \label{def:coam}
A subgroup $K$ of a group $G$ is called \emph{co-amenable} if the quasiregular action $G \acts{} G/K$ is amenable.
\end{definition}
\noindent The equivalence of these three conditions is classical. By definition, an amenable action always admits a F\o lner-sequence. This is a sequence of finite subsets $F_n \subset X$ such that for all $g \in G$ we have  $ \lim_{n \to \infty} \limits \dfrac{\left| g F_n \triangle F_n \right|}{\left| F_n \right |} = 0$. We recall the following:

\begin{remark}
A F\o lner-sequence can be chosen to be increasing (with respect to inclusion).
\end{remark}

As defined in the introduction, a group $G$ is A-separable if the set of co-amenable subgroups is dense in $\Sub(G)$. We now prove Theorem \ref{Asep thm}, giving a characterization of A-separability in the language of generic actions:
\begin{proof}[Proof of Theorem \ref{Asep thm}]
Denote by $\Cam(G)$ the set of all co-amenable subgroups of $G$. Note  For $x\in X$ denote
\begin{align*}
\Sigma(x) &=\{\sigma\in\Hom(G,X!)\mid G\acts \sigma(G)x\text{ is amenable}\}\\
	       &=\{\sigma\in\Hom(G,X!)\mid G_x(\sigma)\in\Cam(G)\}.
\end{align*}
and $\Sigma = \cap_{x  \in X} \Sigma(x)$. If $\Sigma$ is generic then $\Sigma(x)$ is dense in $\Hom(G,X!)$ and by Lemma \ref{lem1} the image of this set $\{ G_x(\sigma)\mid \sigma\in\Sigma(x)\}$ is a dense subset of $\Sub(G)$ consisting of co-amenable subgroups.  

Conversely, assume that $\Cam(G)$ is dense in $\Sub(G)$ and we wish to prove that the set $\Sigma$ is generic in $\Hom(G,X!)$. It is enough to show that $\Sigma(x)$ is generic in $\Hom(G,X!)$ for every $x\in X$. The density of $\Sigma(x)$ is assured by the hypothesis, the fact that $\Sigma(x) = G_x^{-1}(\Cam(G))$ and Lemma \ref{lem1}. To show that $\Sigma(x)$ is $G_{\delta}$, it is enough to show that the condition that a specific finite set $F \subset \sigma(G)x$ is $(\epsilon,\Omega)$-F\o lner is open, where $\epsilon > 0$ and $\Omega \subset G$ is finite. Assume this is the case for some $\sigma \in \Hom(G,X!)$ we seek an open neighborhood $\sigma \in \mathcal{O} \subset \Hom(G,X!)$ such that $F$ is still contained in the orbit, and is still F\o lner for every $\phi \in \mathcal{O}$. For every $f \in F$ pick a group element $g_f\in G$ such that $f = \sigma(g_f)x$. Let $F' = F\cup \{x\}$ and $\Omega' = \Omega \cup \{g_f \ | \ f \in F\}$ - the desired neighborhood is given by $\mathcal{O} = \mathcal{O}(\sigma,F',\Omega')$. 
\end{proof}

As mentioned in the introduction, LERF groups and amenable groups are A-separable but they are not the only examples of A-separable groups. In order to give an example of an A-separable group which is neither LERF nor amenable, we will first prove that A-separability is closed under taking free products:

\begin{theorem}\label{Asep product}
Let $G$ and $K$ be countable groups. If $G$ and $K$ are A-separable then so is $G * K$.
\end{theorem}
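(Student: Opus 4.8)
The plan is to show that $\Cam(G*K)$ is dense in $\Sub(G*K)$ under the assumption that $\Cam(G)$ and $\Cam(K)$ are dense in $\Sub(G)$ and $\Sub(K)$ respectively. Because of the basis described in the excerpt, it suffices to approximate an arbitrary subgroup $\Delta \in \Sub(G*K)$ on an arbitrary finite window $\Omega \subset G*K$ by a co-amenable subgroup. As in the proofs of Theorems \ref{thm:main} and \ref{Asep thm}, I would argue at the level of actions rather than subgroups: fix $\phi * \psi \in \Hom(G*K, X!)$ isomorphic to the quasiregular action $G*K \acts{} (G*K)/\Delta$ with basepoint $x$ identified with $e\Delta$, and recall the identification $\Hom(G*K,X!) \cong \Hom(G,X!) \times \Hom(K,X!)$. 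The stabilizer $G_x(\phi*\psi)$ is determined by the restrictions $\phi,\psi$, and a small perturbation of each factor moves the combined stabilizer only on the finite window. So the goal becomes: perturb $\phi$ to some $\phi'$ and $\psi$ to some $\psi'$, close to the originals on $\Omega$, so that the transitive component of $x$ under $\phi'*\psi'$ is an amenable action of $G*K$.

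The key step is to make the perturbed transitive component amenable, and here I would exploit the hereditary principle for amenable actions under free products. First, using density of $\Cam(G)$ (via Lemma \ref{lem1} and Corollary \ref{cor:dense_dense}), I would perturb the $G$-action $\phi$ so that on the finite window it is unchanged but every $G$-orbit it produces is an amenable $G$-action; do the same for $\psi$ with $\Cam(K)$. The main obstacle is that amenability of each factor's orbits does not immediately give amenability of the $G*K$-orbit of $x$, since free products of amenable groups (e.g. $\Z * \Z = F_2$) are highly non-amenable, so the combined Schreier graph could fail the F\o lner condition. The resolution I would pursue is to build the approximating action so that its transitive component is, up to isomorphism, a \emph{fibered} or \emph{tree-like} construction: the orbit of $x$ should be modeled on the Bass--Serre tree of $G*K$, with vertex-sets carrying amenable $G$- and $K$-actions, arranged so that the global action admits a F\o lner sequence. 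Concretely, one arranges that the orbit is co-amenable by checking the invariant-measure criterion of Definition \ref{def:amenable_action}: whenever $G*K$ acts on a compact space and the relevant point stabilizer fixes a measure, one propagates invariant measures across the tree of subactions using the amenability of the $G$- and $K$-pieces.

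The technical heart, and the step I expect to be hardest, is proving that this tree-of-amenable-actions construction actually yields a co-amenable subgroup of $G*K$ while matching the prescribed finite window. I would isolate this as a self-contained lemma: if a transitive $G*K$-action decomposes, along the Bass--Serre tree, into $G$- and $K$-sub-actions each of which is amenable, then the $G*K$-action is amenable. For the window-matching, I would keep the perturbations supported away from the finite trace $\overline{\Omega}\cdot x$ exactly as in the perturbation arguments of Lemmas \ref{lem1} and \ref{lem2}, splicing the new amenable structure onto the part of the orbit outside the controlled finite set. Combining the window-matching with the amenability lemma shows that an arbitrary basic open neighborhood $\W(\Delta,\Omega)$ in $\Sub(G*K)$ meets $\Cam(G*K)$, which is precisely the density statement defining A-separability of $G*K$.
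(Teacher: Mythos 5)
Your plan founders precisely at the step you identify as the technical heart: the proposed lemma --- that a transitive $G*K$-action whose restrictions to $G$ and to $K$ are amenable on every orbit must itself be amenable --- is false, and it is refuted by the very example you cite as the obstacle. Take $G=K=\Z$ and let $F_2=\Z * \Z$ act on itself by left translation: every orbit of each $\Z$-factor is a regular $\Z$-action, hence amenable, yet the regular $F_2$-action is not amenable. The ``tree-like/fibered along the Bass--Serre tree'' hypothesis does not rescue this, because \emph{every} transitive $G*K$-action decomposes in this way (the Schreier graph of any subgroup of $G*K$ is automatically a graph of $G$-orbits and $K$-orbits glued along the Bass--Serre structure), so the hypothesis adds nothing beyond amenability of the factor restrictions, and the lemma reduces to the false statement above. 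The measure-propagation heuristic fails for the same reason: invariant measures cannot be propagated coherently across the infinitely many pieces of the tree, which is exactly how $F_2$ escapes amenability. Consequently no perturbation scheme whose certificate of co-amenability is ``both factor restrictions have amenable orbits'' can close the argument.

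The paper's proof sidesteps this by never asking a single perturbation to produce an amenable orbit. It fixes $x\in X$, $\epsilon>0$ and finite sets $S\subset G$, $T\subset K$, and proves only that the set $\Sigma(x,\epsilon,S,T)$ of actions $\sigma*\tau$ whose orbit of $x$ contains a single $(\epsilon,S\cup T)$-F\o lner set is open and dense; Baire's theorem over all $(x,\epsilon,S,T)$ then yields generically amenable orbits, and Theorem \ref{Asep thm} converts this into density of co-amenable subgroups. The density argument is pure surgery, with no hereditary principle. After replacing the given action by one in which the $G$-action $\sigma$ and the $K$-action $\tau$ have all orbits amenable (this much of your plan matches the paper), either all relevant factor-orbits in the $G*K$-orbit of $x$ are finite --- in which case one truncates $\sigma$ so that the $G*K$-orbit of $x$ becomes finite and is itself a F\o lner set --- or some $\tau$-orbit is infinite, in which case one chooses a large $(\epsilon,T)$-F\o lner set $F$ for $\tau$ with $|F|>2(|B|+1)/\epsilon$ (where $B$ is the controlled window) and conjugates $\sigma$ by a permutation $\xi$, supported away from the window, which swaps $F\setminus(B\cup\{y\})$ with global fixed points of $\sigma$ (supplied by Lemma \ref{lem2bis}). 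The elements of $S$ then fix all but at most $|B|+1$ points of $F$ pointwise, so $F$ is simultaneously $(\epsilon,S)$- and $(\epsilon,T)$-F\o lner, and a minimal-length/trace argument keeps $F$ inside the orbit of $x$. This is the idea your proposal is missing: instead of trying to \emph{deduce} near-invariance under both factors from amenability of both restrictions (impossible), one \emph{forces} one factor to act almost trivially on a F\o lner set of the other, one finitary condition at a time, and lets Baire's theorem assemble the global statement.
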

\begin{proof}
Every element of $\Hom(G * K,X!)$ is of the form $\phi * \psi$ for $\phi\in\Hom(G,X!), \psi\in\Hom(K,X!)$; where $\phi * \psi$ is defined by setting $\big(\phi * \psi\big)(g) = \phi(g)$ and $\big(\phi * \psi\big)(k) = \psi(k), \forall g\in G, k \in K$ and expanding the definition to the free product.

For every $x\in X,\ \epsilon > 0$ and finite subsets $S\subset G,\ T\subset K$ let 
\begin{multline*}
\Sigma(x,\epsilon,S,T) =\{ \sigma*\tau\in\Hom(G * K,X!) \mid \text{the $(\sigma *\tau)$-orbit of $x$}
\\ \text{contains an $(\epsilon,S\cup T)$-F\o lner set}\}.
\end{multline*}
We want to prove that $\Sigma = \bigcap \Sigma(x,\frac{1}{n},S,T)\ \big( x\in X,n\in\N,S\subset G, T\subset K \text{ finite}\big)$ is generic in $\Hom(G * K,X!)$. Since $X, G$ and $K$ are countable, it is enough to show that the sets $\Sigma(x,\epsilon,S,T)$ are open and dense for every $x\in X,\ \epsilon > 0$ and finite subsets $S\subset G,\ T\subset K$. The argument that shows that $\Sigma(x,\epsilon,S,T)$ is open was given in the proof of Theorem \ref{Asep thm}.

Fix $x, \epsilon, S$ and $T$ as above. We prove that $\Sigma(x,\epsilon,S,T)$ is dense in $\Hom(G * K,X!)$. Let $\phi *\psi\in\Hom(G*K,X!)$ and let $A\subset X$ be finite. We will find $\phi' \in\Hom(G,X!)$ and $\psi' \in\Hom(K,X!)$ such that $\phi'(s)a = \phi(s)a,\ \psi'(t)a =\psi(t)a$ for all $s\in S, t\in T, a\in A$ and such that $\phi' *\psi' \in \Sigma(x,\epsilon,S,T)$. We can assume that $x\in A$. By A-separability, there exist $\sigma \in\Hom(G,X!)$ and $\tau \in \Hom(K,X!)$ such that $\sigma(s)a = \phi(s)a,\ \tau(t)a =\psi(t)a$ for all $s\in S, t\in T, a\in A$ and such that the actions $G \acts{\sigma} X$ and $K\acts{\tau}X$ are amenable on every orbit. Let $L := \sigma *\tau(G*K) = \la \sigma(G),\tau(K)\ra$.

\emph{Case 1: all the $\sigma$ and $\tau$ orbits which are contained in $Lx$ are finite}. Let $B\subset Lx$ be a finite, $\sigma$-invariant set containing $A\cap Lx$ and let $C = \bigcup_{b\in B} \tau(K)b$. We define a representation $\phi' \in \Hom(G,X!)$ by declaring every $c\in C \minus B$ and every element in the $\sigma$-orbit of $c$ to be a fixed point for $\phi'$ and on every other element of $X$, $\phi'(g)$ identifies with $\sigma(g)$ for all $g\in G$. Notice that since $B$ is $\sigma$-invariant, $\phi'(g)$ is well defined and acts the same as $\sigma(g)$ on $B$ for all $g\in G$. In particular, $\phi'(g)$ agrees with $\phi(g)$ on $A$. We have that $C$ is finite, invariant  under both $\phi'$ and $\tau$ and contains $x$. Setting $\psi' = \tau$, the $(\phi'*\psi')$-orbit of $x$ is finite so the orbit itself is an $(\epsilon,S\cup T)$-F\o lner set for $\phi'*\psi'$.

\emph{Case 2: $Lx$ contains either an infinite $\sigma$-orbit or an infinite $\tau$-orbit}. Assume, without loss of generality, that $Lx$ contains an infinite $\tau$-orbit $Y$. Denote $B = A\cup\big( \bigcup_{s\in S} \sigma(s)A\big)$ and let $F_n$ be an increasing F\o lner-sequence in $Y$ for the $\tau$-action. Since the sets $F_n$ are finite, none of them is $\tau$-invariant and so the F\o lner-sequence does not stabilize. This implies that $|F_n| \to \infty$ and in particular, $Y$ contains an $(\epsilon,T)$-F\o lner set $F$ such that $|F| > \dfrac{2(|B|+1)}{\epsilon}$. Now, let $z\in G*K$ be such that $(\sigma *\tau)(z)x\in F$ and such that $z$ is of minimal length with respect to the canonical presentation: $z = g_n k_n g_{n-1} k_{n-1} \cdots g_1 k_1\ (g_i\in G, k_j\in K, g_1,\dots,g_{n-1},k_2,\dots,k_n \neq 1)$. Denote $y = (\sigma *\tau)(z)x$. By Lemma \ref{lem2bis}, we can assume that $\sigma$ has infinitely many fixed points. In particular, there exits a set $C\subset X$ on which $\sigma(G)$ acts trivially, such that $|C| = |F\minus(B\cup\{y\})|$ and such that $C$ does not intersect the finite set $B\cup F\cup \trace{x}{(\sigma *\tau)(z)}$ where we think of $(\sigma *\tau)(z)$ as the word over $X!$ corresponding to the given presentation of $z$. Denote $D = F\minus(B\cup\{y\})$ and let $\xi \in X!$ be a permutation of order $2$ that takes $C$ bijectively onto $D$ and acts trivially on $X\minus (C\cup D)$. We define an action $\phi' \in\Hom(G,X!)$ by $\phi'(g) = \xi^{-1}\sigma(g)\xi$ for all $g\in G$. Since $\xi$ acts trivially on $B$ we have that $\forall s\in S,\ \forall a\in A: \phi'(s)a = \sigma(s)a = \phi(s)a$ and that every element of $D$ is fixed under $\phi'(s)$ for all $s\in S$. Hence:
\[ \forall s\in S: \dfrac{|\phi'(s)F\Delta F|}{|F|} \leq \dfrac{2|F\minus D|}{|F|} \leq \dfrac{2(|B|+1)}{|F|} < \epsilon.\]
Thus $F$ is $(\epsilon,S)$-F\o lner for $\phi'$ and $(\epsilon,T)$-F\o lner for $\psi':=\tau$ and thus $F$ is $(\epsilon,S\cup T)$-F\o lner for $\phi' * \psi'$. Notice that by the minimality of the length of $z$ we have that $\trace{x}{(\sigma *\tau)(z)}\cap F = \{y\}$ and so $\xi$ acts trivially on $\trace{x}{(\sigma *\tau)(z)}$. This means that $(\phi' *\psi')(z)x =y\in F$ and since $F$ is contained in a $\tau$-orbit this implies that $F$ is contained in the $(\phi' *\psi')$-orbit of $x$, as required.

\end{proof}
Recall that the $(m,n)$ Baumslag-Solitar group is the group $BS(m,n) = \la s,t\mid t^{-1}s^m t = s^n\ra$. It is well known that $BS(m,n)$ is solvable (hence amenable) if and only if $m=1$.

\begin{proposition}
For every $n$, the group $BS(1,n)$ is not LERF.
\end{proposition}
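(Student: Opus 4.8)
The plan is to show that $BS(1,n)$ contains a finitely generated subgroup that is not closed in the profinite topology, thereby violating the LERF property directly from its definition. The natural candidate is the cyclic subgroup generated by $s$, or more precisely the kernel of a suitable map. First I would recall the standard structure of $BS(1,n) = \la s,t \mid t^{-1}st = s^n \ra$, which is metabelian: it fits into a short exact sequence $1 \to A \to BS(1,n) \to \Z \to 1$, where $A$ is the normal closure of $s$, isomorphic to the additive group $\Z[1/n]$ of $n$-adic rationals, and the quotient $\Z$ is generated by the image of $t$. Conjugation by $t$ acts on $A \cong \Z[1/n]$ as multiplication by $n$ (so $t^{-1}st = s^n$ corresponds to multiplication by $n$, and $tst^{-1}$ to division by $n$).

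The key step is to exhibit the failure of subgroup separability concretely. I would consider the subgroup $H = \la s \ra \cong \Z$ and the element $w = tst^{-1}$, which represents the element $1/n \in \Z[1/n]$. Since $\Z[1/n]$ has no proper finite-index subgroup containing $\Z$ other than via the whole $n$-divisible structure, one checks that $w \notin H$ but that $w$ cannot be separated from $H$ by any finite-index subgroup: any finite-index subgroup $N \trianglelefteq BS(1,n)$ intersects $A$ in a finite-index subgroup of $\Z[1/n]$, and because $\Z[1/n]$ is $n$-divisible, the image of $A$ in any finite quotient is a finite $n$-divisible abelian group, forcing $s$ and $tst^{-1}$ to become related in a way that places $w$ in every finite-index subgroup containing $H$. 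The cleanest formulation is to observe that in every finite quotient $Q$ of $BS(1,n)$, the image of $s$ is divisible by $n$ inside the image of $A$, so $s$ and $s^{1/n} = tst^{-1}$ generate the same subgroup of $Q$ up to the relation forced by divisibility; hence no finite-index subgroup separates them. This means $H = \la s \ra$ is not an intersection of finite-index subgroups, i.e. $H$ is not closed in the profinite topology, so $BS(1,n)$ is not LERF.

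The main obstacle will be pinning down precisely which finitely generated subgroup and which element witness the non-separability, and verifying the claim about finite quotients rigorously. The crux is the $n$-divisibility of $A \cong \Z[1/n]$: in any finite quotient the image of this subgroup is a finite abelian $n$-divisible group, which forces multiplication by $n$ to be surjective on that image, and this is exactly what collapses the distinction between $s$ and its $n$-th roots in all finite quotients. One must argue carefully that every finite-index subgroup of $BS(1,n)$ contains a finite-index \emph{normal} subgroup (pass to the normal core), reduce to finite quotients, and then use the divisibility to conclude that the $t$-conjugates of $s$ all lie in any finite-index subgroup containing a fixed power of $s$. I would expect the computation with the profinite completion of $\Z[1/n]$ — namely that it is $\prod_{p \mid n}(\text{something}) \times \prod_{p \nmid n} \Z_p$ with the $n$-divisibility killing the $p \mid n$ part — to be the delicate point, but this is standard and amounts to the observation that $\Z$ is not closed in the profinite topology on $\Z[1/n]$.
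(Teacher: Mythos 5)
Your proposal is correct in substance, but it takes a genuinely different (and much heavier) route than the paper. The paper's entire proof is the conjugation trick: $t^{-1}\la s\ra t = \la s^n\ra \subsetneq \la s\ra$, and in any finite quotient conjugate subgroups have images of the same finite cardinality, so the images of $\la s\ra$ and $\la s^n\ra$ must coincide; hence no homomorphism to a finite group separates an element of $\la s\ra \setminus \la s^n\ra$ from the cyclic subgroup $\la s^n\ra$, and $BS(1,n)$ is not LERF (for $n \ge 2$ --- note the proposition tacitly assumes this, since $BS(1,1)=\Z^2$ is LERF; your argument likewise needs $n\ge 2$ so that $1/n \notin \Z$). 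Amusingly, your witness pair $\bigl(\la s\ra,\, tst^{-1}\bigr)$ is exactly the paper's pair $\bigl(\la s^n\ra,\, s\bigr)$ conjugated by $t$, so the two proofs certify non-separability of the same configuration; only the mechanism differs. You invoke the metabelian structure $1 \to \Z[1/n] \to BS(1,n) \to \Z \to 1$ and the $n$-divisibility of $A \cong \Z[1/n]$. This works, but the step you phrase as ``$s$ and $tst^{-1}$ generate the same subgroup of $Q$ up to the relation forced by divisibility'' needs one more precise line: in a finite quotient $Q$, multiplication by $n$ is surjective on the finite image $\bar A$ of $A$, hence an automorphism, so $\gcd(|\bar A|,n)=1$; therefore multiplication by $n$ restricts to an automorphism of the cyclic subgroup $\la \bar s\ra$, which consequently contains an $n$-th root of $\bar s$, and by injectivity of multiplication by $n$ on $\bar A$ that root is unique and equals $\overline{tst^{-1}}$ --- this is what puts $\overline{tst^{-1}}$ in the image of $\la s\ra$ in every finite quotient. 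As for what each approach buys: the paper's argument is two lines, needs no structure theory, and generalizes immediately (any group containing a finitely generated subgroup conjugate to a proper subgroup of itself fails to be LERF), whereas your computation is longer but more explanatory --- it identifies exactly why separability fails, namely that the $p \mid n$ part of $\Z[1/n]$ is invisible in every finite quotient, which is the profinite-completion picture you sketch at the end.
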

\begin{proof}
Write: $BS(1,n) = \la s,t\mid t^{-1}s t = s^n\ra$ and notice that $t^{-1}\la s\ra t = \la s^n\ra \subsetneqq \la s\ra$. Thus, an element of $\la s\ra \minus t^{-1}\la s\ra t$ cannot be separated from $t^{-1}\la s\ra t$ by a homomorphism into a finite group.
\end{proof}

\begin{corollary}
There exist non-LERF, non-amenable A-separable groups.
\end{corollary}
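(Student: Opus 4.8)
The plan is to produce an explicit witness as a free product, combining the two facts just established: that A-separability is preserved under free products (Theorem \ref{Asep product}), and that the Baumslag--Solitar groups $BS(1,n)$ with $n \ge 2$ fail to be LERF. Concretely, I would set $\Gamma = BS(1,2) * \Z$ and check the three required properties in turn. The first is immediate: $BS(1,2)$ is solvable, hence amenable, and $\Z$ is abelian, so in each factor every subgroup is co-amenable, i.e. both factors are trivially A-separable; Theorem \ref{Asep product} then yields that $\Gamma$ is A-separable.

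For non-amenability, I would invoke the standard normal-form (ping-pong) fact that in a free product $A * B$ two infinite-order elements drawn from the distinct factors generate a subgroup isomorphic to $\Z * \Z \cong F_2$. Taking an infinite-order element $s \in BS(1,2)$ together with a generator $t$ of the $\Z$ factor produces a copy of $F_2$ inside $\Gamma$, so $\Gamma$ contains a nonabelian free group and is therefore non-amenable.

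For the failure of LERF, the key observation is that the LERF property is inherited by subgroups: if $K \le H \le L$ with $K$ finitely generated and $L$ LERF, then writing $K = \bigcap_i N_i$ with each $[L:N_i] < \infty$ gives $K = \bigcap_i (N_i \cap H)$, an intersection of finite-index subgroups of $H$, so $H$ is LERF. Since $BS(1,2)$ sits inside $\Gamma$ as a free factor and is not LERF by the preceding proposition, $\Gamma$ cannot be LERF. The only step deserving any care is this hereditary property of LERF, which is elementary and verified in the line above; every other ingredient is quoted directly. Thus $\Gamma = BS(1,2) * \Z$ is A-separable, non-amenable, and non-LERF, establishing the corollary (and the analogous construction works for any $BS(1,n)$ with $n \ge 2$ in place of $BS(1,2)$).
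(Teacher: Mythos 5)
Your proof is correct and follows essentially the same route as the paper's, which takes $G * G$ with $G = BS(1,n)$ where you take $BS(1,2) * \Z$: in both cases one combines Theorem \ref{Asep product} applied to amenable (hence A-separable) free factors, the non-LERF property of $BS(1,n)$ together with the heredity of LERF under passing to subgroups (which you verify correctly and the paper merely quotes), and the presence of a free subgroup $F_2$ for non-amenability. The choice of the second free factor is immaterial, so this is the same argument in all essentials.
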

\begin{proof}
Let $G = BS(1,n)$ for some $n$. $G$ is amenable hence A-separable and so, by Proposition \ref{Asep product}, $G * G$ is A-separable. On the other hand, $G * G$ is not LERF since $G$ is not LERF and the LERF property passes to subgroups. $G * G$ is also not amenable since it contains a free subgroup on two generators.
\end{proof}

In order to complete the proofs of all the statements promised in the introduction we prove the following:
\begin{proposition}
A group $G$ with Kazhdan property (T) is A-separable if and only if it is LERF. In particular the following groups are never A-separable:
\begin{itemize}
\item Groups with property (T) that are not residually finite, and in particular any simple group with property (T). 
\item Irreducible lattices in higher rank semi-simple Lie groups with no compact factors that satisfy the congruence subgroup property. 
\end{itemize}
\end{proposition}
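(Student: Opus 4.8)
The plan is to reduce the whole statement to the single structural fact that, in the presence of property (T), co-amenability collapses to finite index. Granting this, A-separability of a property (T) group $G$ is literally the density of $\Sub^{\fii}(G)$ in $\Sub(G)$, which by Theorem \ref{thm:LERF} (the equivalence of conditions (1) and (2)) is the same as $G$ being LERF. So the bulk of the work is the identity $\Cam(G) = \Sub^{\fii}(G)$ for property (T) groups, together with the elementary observation that every property (T) group is finitely generated, so that Theorem \ref{thm:LERF} genuinely applies.

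To prove $\Cam(G) = \Sub^{\fii}(G)$ I would argue as follows. The inclusion $\Sub^{\fii}(G) \subseteq \Cam(G)$ is clear, since the uniform probability measure on a finite set $G/H$ is $G$-invariant. For the reverse inclusion, recall the standard characterization of co-amenability: a subgroup $H \le G$ is co-amenable exactly when the trivial representation $1_G$ is weakly contained in the quasi-regular representation of $G$ on $\ell^2(G/H)$. Property (T) upgrades weak containment of $1_G$ into genuine containment, so $\ell^2(G/H)$ carries a nonzero $G$-invariant vector; since $G$ acts transitively on $G/H$, such a vector is constant, and a nonzero constant function lies in $\ell^2(G/H)$ only when $G/H$ is finite. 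Hence $[G:H] < \infty$, giving $\Cam(G) \subseteq \Sub^{\fii}(G)$.

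For the first family of examples I would use that LERF implies residual finiteness: applying the LERF property to the (finitely generated) trivial subgroup shows that for each $g \ne e$ some finite-index subgroup omits $g$. Consequently a property (T) group that is \emph{not} residually finite cannot be LERF, and by the equivalence above it cannot be A-separable. An infinite simple group has no proper finite-index subgroups (a finite-index subgroup contains its normal core, which would otherwise be a proper nontrivial normal subgroup), hence is not residually finite; this yields the simple-group case.

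The substantive case is that of an irreducible higher-rank lattice $\Gamma$ with the congruence subgroup property. By Kazhdan's theorem $\Gamma$ has property (T), so by the equivalence it suffices to show $\Gamma$ is \emph{not} LERF, equivalently (again by Theorem \ref{thm:LERF}, now the failure of condition (2)) to exhibit a finitely generated subgroup that is not closed in the profinite topology. Here the congruence subgroup property is precisely what is needed: it identifies the profinite topology on $\Gamma$ with the congruence topology, up to a finite central kernel. I would then take a finitely generated, Zariski-dense subgroup $\Delta \le \Gamma$ of infinite index — for instance a free group produced by the Tits alternative, of infinite index because it contains no copy of $\mathbb{Z}^2$ while $\Gamma$ and all its finite-index subgroups do. By strong approximation a Zariski-dense subgroup surjects onto almost all the finite congruence quotients, so its closure in the congruence completion, hence in the profinite completion, contains a finite-index subgroup; intersecting back with $\Gamma$, the profinite closure of $\Delta$ has finite index and therefore strictly contains the infinite-index subgroup $\Delta$. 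Thus $\Delta$ is not separable and $\Gamma$ is not LERF, hence not A-separable. I expect this last step to be the main obstacle: marshalling strong approximation (Matthews–Vaserstein–Weisfeiler, Nori, Weisfeiler) together with the congruence subgroup property to pin down the profinite closure of $\Delta$, and verifying that an infinite-index Zariski-dense finitely generated subgroup exists in the required generality, is where the real content sits, whereas the reduction to LERF and the non-residual-finiteness argument are comparatively soft.
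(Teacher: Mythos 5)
Your proposal is correct and follows essentially the same route as the paper: both hinge on the fact that under property (T) a co-amenable subgroup has finite index (the paper turns a F\o lner set $F$ into the almost-invariant vector $1_F \in \ell^2(G/H)$, which is the same mechanism as your weak-containment phrasing), and both dispose of the lattice case via strong approximation plus the congruence subgroup property identifying the profinite and congruence topologies. Your extra details --- the reduction to density of $\Sub^{\fii}(G)$ via Theorem \ref{thm:LERF}, the explicit infinite-index Zariski-dense witness, and the residual-finiteness argument for the simple-group case --- merely flesh out steps the paper leaves to the reader.
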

\begin{proof}
This follows directly from the fact that a transitive action $G \acts{} G/H$ is amenable if and only if $G/H$ is finite. The argument for that follows from property (T). If this action is amenable and $F \subset G/H$ is an $(K,\epsilon)$ F\o lner set then $1_{F} \in \ell^2(G/H)$ is a $(K,\epsilon)$-almost invariant vector. Taking $(K,\epsilon)$ to be Kazhdan constants for $G$ we can deduce the existence of a non-zero invariant vector $f \in \ell^2(G/H)$. Since the action of $G$ on $G/H$ is transitive $f$ must be a constant function. But a non-zero constant function is in $\ell^2$ if and only if $G/H$ is finite. 

Now if $\Gamma$ is a lattice as in the statement of the theorem it cannot be LERF because by the strong approximation theorem \cite[Window 9]{LS:SubgroupGrowth},\cite{NN:SA_in_group_theory} every Zariski dense subgroup has a finite index closure in the pro-congruence topology; which coincides with the profinite topology by assumption. By Kazhdan's theorem such a lattice has property (T) and the statement follows. 
\end{proof}
Note that conjecturally the congruence subgroup property automatically holds for such higher rank lattices, and this is indeed proven in many different cases. In particular the groups $\operatorname{SL}_n(\Z), \ n \ge 3$ are good examples for generically finite groups that is not A-separable. We conjecture further that the LERF property can never occur in a nontrivial way for property (T) groups namely:
\begin{conjecture}
A countable group $G$ with Kazhdan property (T) is LERF if and only if it is finite. 
\end{conjecture}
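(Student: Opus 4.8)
The plan is to settle the trivial direction immediately and then reduce the substantive implication to a single finite-index-closure statement in which property (T) must supply all the force. The easy direction is clear: a finite group has only finitely many subgroups, each of finite index, so it is trivially LERF, and finite groups satisfy property (T) vacuously. Thus only the implication \emph{(T) and LERF $\Rightarrow$ finite} requires work.

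For the setup and reduction, recall that a countable group with property (T) is finitely generated. If such a $G$ is LERF then, applying separability to the (finitely generated) trivial subgroup, $G$ is residually finite and embeds in its profinite completion $\hat G$. By the preceding Proposition, for a property (T) group co-amenability coincides with finite index, so by Theorem~\ref{thm:LERF} and Theorem~\ref{Asep thm} the LERF property is equivalent to density of $\Sub^{\fii}(G)$ in $\Sub(G)$, and equivalently to a generic permutation representation having only finite orbits. It therefore suffices to show that an infinite, finitely generated, residually finite group with property (T) cannot have $\Sub^{\fii}(G)$ dense in $\Sub(G)$; equivalently, that it cannot separate every finitely generated subgroup from the elements outside it using finite quotients.

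The core strategy is to localize the obstruction to \emph{infinite-index} finitely generated subgroups, since finite-index subgroups and finite quotients are entirely compatible with property (T) (as the examples $\operatorname{SL}_n(\Z)$ illustrate). I would then try to establish the analogue of the strong-approximation mechanism used above for higher rank lattices: that property (T) forces every sufficiently large finitely generated $H \le G$ to have profinite closure $\bar H$ of \emph{finite} index in $\hat G$. Concretely, one selects an infinite-index finitely generated subgroup $H$ (produced from an infinite-order element, or from a free subgroup when a Tits-type alternative is available), and considers the quasi-regular representations $\ell^2\big((G/N_i)/\text{im}(H)\big)$ coming from the finite quotients $G/N_i$. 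If $\bar H$ had infinite index, these coset spaces would carry F\o lner sets and hence almost-invariant vectors, contradicting the uniform spectral gap furnished by property (T)---exactly the argument used above to defeat LERF for congruence lattices. This would force $[\,\hat G : \bar H\,] < \infty$, whence $\bar H \smallsetminus H \ne \emptyset$ and $H$ is not separable, contradicting LERF.

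The step I expect to be the genuine obstacle---and the reason the statement is only conjectural---is precisely the production of those almost-invariant vectors from an infinite-index closure. In the arithmetic case this rests on strong approximation, which guarantees that an infinite-index finitely generated subgroup has infinitely many finite quotients in which its image has growing index; a general property (T) group carries no algebraic-group or congruence structure to supply such a statement, and it is not at all clear how to extract a F\o lner sequence from the mere existence of an infinite-index finitely generated subgroup. Replacing strong approximation by a purely (T)-theoretic input, or alternatively ruling out separable infinite-index finitely generated subgroups via a rigidity dichotomy for conjugation-invariant measures on $\Sub(G)$ in the spirit of Stuck--Zimmer, is the decisive and currently unresolved point.
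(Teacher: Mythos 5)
You have correctly recognized that this statement appears in the paper only as a \emph{conjecture}: the paper offers no proof, and your write-up, by your own candid admission, is a strategy with an unresolved core rather than a proof. Your preliminary reductions are sound: the finite case is trivial; a countable (T) group is finitely generated; LERF applied to the trivial subgroup gives residual finiteness; and by the paper's final Proposition, co-amenability coincides with finite index under (T), so via Theorems \ref{thm:LERF} and \ref{Asep thm} the problem reduces to showing an infinite, residually finite, finitely generated (T) group cannot have $\Sub^{\fii}(G)$ dense in $\Sub(G)$. Up to there you are aligned with what the paper actually establishes.

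However, beyond the gap you flag, the specific spectral mechanism you propose is flawed, not merely unproved. If the profinite closure $\overline{H}$ of a finitely generated $H \le G$ has infinite index in $\widehat{G}$, the representations $\ell^2\bigl((G/N_i)/\mathrm{im}(H)\bigr)$ are \emph{finite} permutation modules, and each genuinely contains an invariant vector (the constants); the Kazhdan spectral gap lives on the orthogonal complement $\ell^2_0$ and is in no tension whatsoever with $[\,G/N_i : \mathrm{im}(H)\,] \to \infty$. Almost-invariant vectors arise, as in the paper's Proposition, from amenability of the \emph{infinite} quasi-regular action $G \curvearrowright G/H$ --- and LERF never asserts such amenability for infinite-index $H$; it asserts only that $H$ is an intersection of finite-index subgroups, which is fully compatible with (T). Indeed, if your mechanism worked it would prove too much: it would forbid any (T) group from having finite quotients in which images of fixed finitely generated subgroups have unbounded index, which is false for $\operatorname{SL}_n(\Z)$, $n \ge 3$. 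In the paper's lattice argument the force comes not from (T)-representation theory at this step but from strong approximation plus the congruence subgroup property, a purely algebraic input forcing $[\widehat{G} : \overline{H}] < \infty$ for Zariski-dense $H$; property (T) alone supplies no substitute, and finding one (or a Stuck--Zimmer-type rigidity replacement, as you suggest) is precisely the open content of the conjecture, which the paper notes was also raised by Long and Reid.
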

It was pointed out to us by Matthew Stover that A similar question was already asked by Long and Reid in \cite[Question 4.5]{LR:subgroup_sep}. We are grateful to some very constructive comments on the previous version of this paper by Mikl\'{o}s Ab\'{e}rt, Eli Glasner, and Matthew Stover. J.M wishes to thank Romain Tessera for interesting conversations and suggestions about solitary groups. 

This work was written while the first author was on sabbatical at the University of Utah. I am  grateful to the math department there for their hospitality. Y.G. acknowledges support from U.S. National Science Foundation grants DMS 1107452, 1107263, 1107367 ``RNMS: Geometric structures And Representation varieties" (the GEAR Network) that enabled this visit. The first and second authors were both partially supported by the Israel Science Foundation grant ISF 2095/15, and the third author was partially supported by 
Agence Nationale de la Recherche grant Grupoloco (ANR-11-JS01-0008).

% BibTeX users please use
\bibliographystyle{alpha}
\bibliography{yair}

\noindent {\sc Yair Glasner.} Department of Mathematics.
Ben-Gurion University of the Negev.
P.O.B. 653,
Be'er Sheva 84105,
Israel.
{\tt yairgl\@@math.bgu.ac.il}\bigskip

\noindent {\sc Daniel Kitroser.} Department of Mathematics.
Ben-Gurion University of the Negev.
P.O.B. 653,
Be'er Sheva 84105,
Israel.
{\tt kitrosar\@@post.bgu.ac.il}\bigskip

\noindent {\sc Julien Melleray.} 
  Universit\'e Claude Bernard -- Lyon 1 .
  Institut Camille Jordan, CNRS UMR 5208 . 
  43 boulevard du 11 novembre 1918, 
  69622 Villeurbanne Cedex, 
{\tt melleray\@@math.univ-lyon1.fr}\bigskip

\end{document}